\newtheorem{theorem}{Theorem}
\newtheorem{corollary}{Corollary}
\newtheorem{lemma}[theorem]{Lemma}
\newtheorem{example}{Example}
\def\A{\boldsymbol{A}}
\def\a{\boldsymbol{a}}
\def\B{\boldsymbol{B}}
\def\C{\boldsymbol{C}}
\def\e{\boldsymbol{e}}
\def\g{\boldsymbol{g}}
\def\G{\boldsymbol{G}}
\def\h{\boldsymbol{h}}
\def\H{\boldsymbol{H}}
\def\I{\boldsymbol{I}}
\def\J{\boldsymbol{J}}
\def\M{\mathcal{M}}
\def\q{\boldsymbol{q}}
\def\Q{\boldsymbol{Q}}
\def\S{\boldsymbol{S}}
\def\t{\boldsymbol{t}}
\def\T{\boldsymbol{T}}
\def\bu{\boldsymbol{u}}
\def\U{\boldsymbol{U}}
\def\v{\boldsymbol{v}}
\def\V{\boldsymbol{V}}
\def\W{\boldsymbol{W}}
\def\x{\boldsymbol{x}}
\def\X{\boldsymbol{X}}
\def\bbeta{\boldsymbol{\beta}}
\def\btheta{\boldsymbol{\theta}}
\def\bstheta{\mbox{\scriptsize \boldmath $\theta$}}
\def\etab{\boldsymbol{\eta}}
\def\hetab{\hat{\etab}}
\def\bOmega{\boldsymbol{\Omega}}
\def\bSigma{\boldsymbol{\Sigma}}
\def\bGamma{\boldsymbol{\Gamma}}
\def\bgamma{\boldsymbol{\gamma}}
\def\bpsi{\boldsymbol{\psi}}
\def\bspsi{\mbox{\scriptsize \boldmath $\psi$}}
\def\bnu{\boldsymbol{\nu}}
\def\Sumij{\sum_{i=0}^1 \sum_{j=1}^{n_i}}
\def\Sumj{\sum_{j =1}^{n_1}}
\def\ba{\left( \begin{array}}
\def\ea{\end{array} \right)}
\def\ls{\left(}
\def\rs{\right)}
\def\lb{\left\{ }
\def\rb {\right\} }
\def\Lm{\left[ }
\def\Rm{\right] }
\begin{document}

{\centering {\large {\bf Semiparametric empirical likelihood inference with estimating equations under density ratio models}}}
\bigskip

\centerline{Meng Yuan, \ Pengfei Li \ and \ Changbao Wu\footnote{Meng Yuan is  doctoral student, Pengfei Li is Professor and Changbao Wu is Professor, Department of Statistics and Actuarial Science, University of Waterloo, Waterloo ON N2L 3G1,Canada (E-mails: {\em m33yuan@uwaterloo.ca}, \ {\em pengfei.li@uwaterloo.ca} \ and \ {\em cbwu@uwaterloo.ca}).}}

\bigskip

\bigskip

\hrule

{\small
\begin{quotation}
\noindent
The density ratio model (DRM)  provides a flexible and useful platform for combining information from multiple sources.
In this paper, we consider statistical inference under two-sample DRMs with 
additional parameters defined through and/or additional  auxiliary information expressed as  estimating equations. 
We examine the asymptotic properties of the maximum empirical likelihood estimators (MELEs) of the unknown parameters in the DRMs and/or defined through estimating equations, and establish the chi-square limiting distributions for the empirical likelihood ratio (ELR) statistics.
We show that the asymptotic variance  of the MELEs of the unknown parameters
does not decrease if one estimating equation is dropped. 
Similar properties are obtained for inferences on the cumulative distribution function and quantiles of each of the populations involved. 
We also propose an ELR test for the validity and usefulness of the auxiliary information.  
Simulation studies show that correctly specified estimating equations for the auxiliary information result in more efficient estimators and shorter confidence intervals. Two real-data examples are used for illustrations. 

\vspace{0.3cm}

\noindent
{\bf Keywords:}  Auxiliary information, density ratio model, empirical likelihood, estimating equations.
\end{quotation}
}

\hrule

\bigskip

\bigskip

\section{Introduction}
\label{gee_intro}
\subsection{Problem setup}

Suppose we have two independent random samples $\{X_{01},\ldots,X_{0n_0}\}$ and $\{X_{11},\ldots,X_{1n_1}\}$ from two populations with cumulative distribution functions (CDFs) $F_0$ and $F_1$, respectively. 
The dimension of $X_{ij}$ can be one or greater than one. 
We assume that the CDFs $F_0$ and $F_1$ are linked through a semiparametric density ratio model (DRM) \citep{Anderson1979,Qin2017},
\begin{equation}
\label{drm_def}
    dF_1(x) = \exp\{\alpha+\bbeta^\top\q(x)\}dF_0(x)= \exp\{\btheta^\top\Q(x)\}dF_0(x),
\end{equation}
where $dF_i(x)$ denotes the density of $F_i (x)$ for $i = 0$ and $1$; 
$\btheta=(\alpha,\bbeta^\top)^\top$ are the unknown parameters for the DRM; 
 $\Q(x) = (1,\q(x)^\top)^\top$ with $\q(x)$ being a prespecified, nontrivial function of dimension $d$; 
 and the baseline distribution $F_0$ is unspecified.
We further assume that  
certain auxiliary information about   $F_0$, $F_1$,  and  $\btheta$ is available in the form of 
functionally independent unbiased estimating equations (EEs): 
\begin{equation}
\label{ee}
    E_0\{ \g(X;\bpsi,\btheta)\} = {\bf 0},
\end{equation}
where $E_0(\cdot)$ refers to the expectation operator with respect to $F_0$, $\bpsi$ consists of additional parameters of interest and has dimension $p$,   $\g(\cdot;\cdot)$ is $r$-dimensional, and $r \geq p$. 
In this paper, our goal is twofold:
\begin{enumerate}
\item[(1)] we develop new and general semiparametric inference procedures for $(\bpsi,\btheta)$ and $(F_0,F_1)$ along with their quantiles under Model (\ref{drm_def}) with unbiased EEs in (\ref{ee}); 
\item[(2)] we propose a new testing procedure on the validity of (\ref{ee}) under Model (\ref{drm_def}), which leads to a practical validation  method on the usefulness of the auxiliary information. 
\end{enumerate}

 The semiparametric DRM in \eqref{drm_def}  provides a flexible and useful platform for combining information from multiple sources \citep{Qin2017}. 
 It enables us to utilize information from both $F_0$ and $F_1$
 to improve inferences on the unknown model parameters and the summary population quantities of interest \citep{chen2013quantile,cai2017hypothesis,zhuang2019semiparametric}. 
With the unspecified $F_0$, 
the DRM embraces many commonly used statistical models including distributions of exponential families \citep{kay1987transformations}. 
For example, 
when $\q(x)=\log x$, the DRM includes two log-normal distributions with the same variance with respect to the log-scale, as well as two gamma distributions with the same scale parameter; 
when $\q(x)=x$, it includes two normal distributions with different means but a common variance and  two exponential distributions. 
 \cite{jiang2012inference} observed that the DRM is actually broader than Cox proportional hazard models.
Moreover, it has a natural connection to the well-studied logistic regression if one treats $D$ = 0 and 1 as 
indicators for the observations from $F_0$ and $F_1$, respectively.
Among others,  \cite{Anderson1979} and \cite{qin1997goodness} 
noticed that the DRM is equivalent to the logistic regression model via the
fact that
\begin{equation}
\label{drm_logis}
P(D=1|x)=\frac{\exp\{\alpha^*+\bbeta^\top\q(x)\} }{ 1+\exp\{\alpha^*+\bbeta^\top\q(x)\}  },
\end{equation}
where $\alpha^*=\alpha+\log\{P(D=1)/P(D=0)\}$. 

The EEs in (\ref{ee}) play two important roles. First, they can be used to define many important summary population quantities such as the ratio of the two population means,
the centered and uncentered moments, the generalized entropy class of inequality measures, the CDFs, and the quantiles of each population.
See Example \ref{example1} below and Section 1 of the Supplementary Material for more examples. 
Second, they provide a unified platform for the use of auxiliary information.  With many data sources being increasingly available, it becomes more feasible to access auxiliary information, and using such information to enhance statistical inference is an important and active research topic in many fields. Calibration estimators, which are widely used in survey sampling, missing data problems and causal inference, rely heavily on the use of auxiliary information; 
see \cite{Wu2020book} and the references therein. 
%
Many economics problems can be addressed using similar methodology. For instance, knowledge of the moments of the marginal distributions of economic variables from census reports can be used in combination with microdata to improve the parameter estimates of microeconomic models \citep{imbens1994combining}. 
Examples \ref{example2} and \ref{example3} below illustrate the use of auxiliary information through EEs in the form of (\ref{ee}).

\begin{example} (The mean ratio of two populations)
\label{example1}
The ratio of the means of two positive skewed distributions is often of interest in biomedical research
\citep{Zhou1997,wu2002likelihood}. 
Let $\mu_0$ and $\mu_1$ be the means with respect to $F_0$ and $F_1$, respectively. 
Further, let $\delta=\mu_1/\mu_0$ denote the mean ratio of the two populations. 
For inference on  $\delta$, 
a common assumption  is that both distributions are lognormal. 
To alleviate the risk of parametric assumptions,  
we could use the DRM in (\ref{drm_def}) with $\q(x)=\log x$ or  $\q(x)=(\log x,\log ^2 x)^\top$
depending on whether or not  the variances with respect to the log-scale are  the same. 
Then, under the DRM \eqref{drm_def}, $\delta$ can be defined through the following EE: 
\begin{equation*}
g(x;\bpsi,\btheta)= \delta x- x \exp\{\btheta^\top\Q(x)\}, 
\end{equation*}
with $\bpsi=\delta$. 
When additional information is available, we may add more EEs to improve the estimation efficiency; 
see Section \ref{gee_sim1} for further detail.
\end{example}

\begin{example} (Retrospective case-control studies with auxiliary information)
\label{example2}
Consider a retrospective case-control study with
 $D=1$ or 0 representing diseased or disease-free status, 
 and $X$ representing the collection of risk factors. 
 Note that the two samples are collected retrospectively, given the diseased status. 
Let $F_0$ and $F_1$ denote the CDF of $X$ given $D=0$ and $D=1$, respectively.
Assume that the relationship between $D$ and $X$
can be modeled by the logistic regression specified in \eqref{drm_logis}.  
Then, using the equivalence between the DRM and the logistic regression discussed above, 
$F_0$ and $F_1$ satisfy the DRM   \eqref{drm_def}. 

\cite{qin2014using} used covariate-specific disease prevalence information to improve the power of case-control studies. Specifically, let 
$X=(Y,Z)^\top$ with $Y$ and $Z$ being two risk factors. Assume that we know the disease prevalence at various levels of $Y$:  $\phi(a_{l-1},a_l)=P(D=1|a_{l-1}<Y\leq a_l)$ for $l=1,\ldots,k$. 
Let $\pi=P(D=1)$ be the overall disease prevalence. 
Using Bayes' formula, the information in the $\phi(a_{l-1},a_l)$'s can be summarized as 
$E_0\{\g(X;\bpsi,\btheta)\}={\bf 0}$, where  $\bpsi=\pi$ and the $l$th component of $\g(x;\bpsi,\btheta)$ is 
\begin{equation} 
\label{ee.qin}
g_l(x;\bpsi,\btheta) = I(a_{l-1}< x \leq a_l)\Lm \frac{\pi}{1-\pi}\exp\{\btheta^\top\Q(x)\}  - \frac{\phi(a_{l-1},a_l)}{1-\phi(a_{l-1},a_l)} \Rm. 
\end{equation}

\cite{chatterjee2016constrained} improved the internal study by using summary-level information from an external study. Suppose $X=(Y^\top,Z^\top)^\top$, where $Y$ is available for both the internal and external studies, while $Z$ is available for only the internal study. 
Assume that the external study provides the true coefficients $(\alpha^*_Y,\bbeta_{Y}^*)$
for the following logistic regression model, which may not be the true model:  
$$
h(Y;\alpha_Y,\bbeta_Y)=P(D=1|Y)
=\frac{\exp(\alpha+\bbeta_{Y}^\top Y)}{1+\exp(\alpha+\bbeta_{Y}^\top Y)}.
$$  
This assumption is reasonable when  the total sample size $n=n_0+n_1$ satisfies 
$n/n_E\to 0$, where $n_E$ is the total sample size in the external study. 
Further, assume that the joint distribution of $(D,X)$ is the same for both the internal and external studies. 
Let $h(y)= h(y;\alpha_Y^*,\bbeta_Y^*)$.
In Section 2 of the Supplementary Material, we argue that if the external study is a prospective case-control study, then 
$E_0\{\g(X;\bpsi,\btheta)\}={\bf 0}$, 
where 
\begin{equation} 
\label{ee.pros}
\g(x;\bpsi,\btheta) =[ -(1-\pi) h(y)  +\pi  \exp\{\btheta^\top\Q(x)\}\{1-h(y)\} ](1,y^\top)^\top
\end{equation}
with $\bpsi=\pi$; 
if the external study is a retrospective case-control study, then  $E_0\{\g(X;\btheta)\}={\bf 0}$, where
\begin{equation} 
\label{ee.retro}
\g(x;\btheta) =[ -(1-\pi_E) h(y)  +\pi_E  \exp\{\btheta^\top\Q(x)\}\{1-h(y)\} ](1,y^\top)^\top
\end{equation}
with $\pi_E$ being the proportion of diseased individuals in the external study.

%
%


%

\end{example}

\begin{example} (A two-sample problem with common mean)
\label{example3}
\cite{tsao2006empirical} considered two populations with a common mean. 
This type of problems occurs when  two ``instruments"  are used to collect data on a common response variable, and these two  instruments  are believed to have no systematic biases but differ in precision. 
The observations from the two instruments then form two samples with a common population mean. 
In the literature, there has been much interest in using the pooled sample to improve inferences. 
A common assumption is that the two samples follow normal distributions with a common mean but different variances. 
To gain robustness with respect to the parametric assumption, we may use the DRM \eqref{drm_def} with $\q(x)=(x,x^2)^\top$. 
Under this model, the common-mean assumption can be incorporated via the EE: 
\begin{equation}
\label{example3.ee}
E_0\{ X\exp\{\btheta^\top\Q(X)\} - X\}=0. 
\end{equation}
%
%

\end{example}

\subsection{Literature review}
The DRM has been investigated extensively because of its flexibility and efficiency. 
For example, 
it has been applied to  multiple-sample 
hypothesis-testing problems \citep{Fokianos2001, cai2017hypothesis,WANG2017, WANG2018} and quantile and quantile-function estimation \citep{zhang2000quantile, chen2013quantile}. 
These inference problems can be viewed as special cases, without auxiliary information, of the first goal to be achieved in this paper. 
%
Other applications of the DRM include 
receiver operating characteristic (ROC) analysis \citep{qin2003using,chen2016using,yuan2021semiparametric}, 
inference under semiparametric mixture models \citep{qin1999empirical, Zou2002, li2017semiparametric},
the modeling of multivariate extremal distributions \citep{Davison2014}, 
and
dominance index estimation \citep{zhuang2019semiparametric}.
Recently, \cite{li2018comparison} studied maximum empirical likelihood 
estimation (MELE) and empirical likelihood ratio (ELR) based confidence intervals (CIs)
for a parameter defined as 
$\bpsi=\int  u(x;\btheta) dF_0(x)$, where $u(\cdot;\cdot)$ is a one-dimensional function. 
They did not consider auxiliary information, and because of
the specific form of $\bpsi$, their results do not apply to the mean ratio  discussed in Example \ref{example1}. 
\cite{zhang2020empirical} investigated the ELR statistic for quantiles under the DRM
and showed that the ELR-based confidence region of the quantiles is preferable to 
the Wald-type confidence region. 
Again, they did not consider auxiliary information. 
In summary, the existing literature on DRMs focuses on cases where there is no auxiliary information, and furthermore, there is no general theory available to handle parameters defined through the EEs in \eqref{ee}. 
%
 
Using  the connection of the DRM to the logistic regression model, 
 \cite{qin2014using} studied the MELE of $\btheta$ and the ELR statistic for testing a parameter in $\btheta$
under Model \eqref{drm_def} with the unbiased EEs in  \eqref{ee.qin}. 
\cite{chatterjee2016constrained} proposed constrained maximum 
likelihood estimation for the unknown parameters in the internal study
using summary-level information from an external study. 
In Section 2 of the Supplementary Material, we argue that
their results  are applicable to the MELE of $\btheta$
under Model \eqref{drm_def} with the unbiased EEs in \eqref{ee.pros} 
but not to the MELE of $\btheta$
under Model \eqref{drm_def} with the unbiased EEs in \eqref{ee.retro}.
Furthermore, they did not consider the ELR statistic for the unknown parameters. 
\cite{qin2014using} and \cite{chatterjee2016constrained} focused on how to use auxiliary information to improve inference on the unknown parameters, and they did not check the validity of that information 
or explore inferences on the CDFs $(F_0,F_1)$  and their quantiles.

\subsection{Our contributions}
%
With two-sample observations from the DRM \eqref{drm_def}, 
we use 
the empirical likelihood (EL) of \cite{owen1988empirical, owen2001empirical}  to incorporate the unbiased EEs in \eqref{ee}. 
We show that the MELE of $(\bpsi,\btheta)$ is asymptotically normal, and its asymptotic variance will not decrease when an EE in \eqref{ee} is dropped. 
We also develop an ELR statistic for testing a general hypothesis about $(\bpsi,\btheta)$, and show that it has a $\chi^2$ limiting distribution under the null hypothesis. 
The result can be used to construct the ELR-based confidence region for $(\bpsi,\btheta)$.
Similar results are obtained for inferences on $(F_0,F_1)$ and their quantiles. 
Finally,  we construct an ELR statistic with the $\chi^2$ null limiting distribution 
to test the validity of some or all of the EEs in \eqref{ee}.

We make the following observations: 
\begin{enumerate}
\item[(1)] Our results on the two-sample DRMs contain more advanced development than those in  \cite{qin1994empirical} for the one-sample case.
\item[(2)] Our inferential framework and theoretical results are very general. 
The results in \cite{qin2014using} and \cite{chatterjee2016constrained} for case-control studies are special cases of our theory for an appropriate choice of $\g(x;\bpsi,\btheta)$ in \eqref{ee}. 
Our results are also applicable to cases that are not covered by these two earlier studies, e.g., Example \ref{example2} with the EEs in \eqref{ee.retro} and Example \ref{example3}. 
\item[(3)] Our proposed ELR statistic, to our best knowledge, is the first formal procedure to test the validity of auxiliary information under the DRM or for case-control studies.
\item[(4)] Our proposed inference procedures for $(F_0,F_1)$ and their quantiles in the presence of auxiliary information are new to the literature. 
\end{enumerate}
 
The rest of this paper is organized as follows. In Section \ref{gee_result}, we develop the EL inferential procedures and study the asymptotic properties of the MELE of $(\bpsi,\btheta)$. 
We also investigate the ELR statistics for $(\bpsi,\btheta)$ and for testing the validity of the EEs in \eqref{ee}.
In Section \ref{gee_cdf.quantile}, we discuss inference procedures for $(F_0,F_1)$ and their quantiles. 
Simulation results are reported in Section \ref{gee_sim}, and two real-data examples are presented in Section \ref{gee_real}. We conclude the paper with a discussion in Section \ref{gee_dis}.  For convenience of presentation, all the technical details are given in the Supplementary Material.

\section{Empirical likelihood and inference on $(\bpsi,\btheta)$}
\label{gee_result}

In this section, we first develop the EL formulation under the DRM \eqref{drm_def} with the unbiased EEs in  \eqref{ee}. 	
With two samples $\{X_{01},\ldots,X_{0n_0}\}$ and $\{X_{11},\ldots,X_{1n_1}\}$
from $F_0$ and $F_1$, respectively, the full likelihood is 
$$
\prod_{i=0}^1\prod_{j=1}^{n_i} dF_i(X_{ij}). 
$$
Under the one-sample EL formulation of \cite{owen2001empirical}, the baseline distribution function $F_0(x)$ would have been modeled as 
$F_0(x) = \sum_{j = 1}^{n_{0}}p_{j}I(X_{0j}\le x)$, where $p_{j}=dF_0(X_{0j})$ for $j=1,\ldots,n_{0}$. Under the two-sample DRM \eqref{drm_def}, 
we use the combined sample to model the baseline function $F_0(x)$ as 
\begin{equation} \label{el.F0}
F_0(x)= \sum_{i=0}^1\sum_{j = 1}^{n_{i}}p_{ij}I(X_{ij}\le x),
\end{equation}
where $p_{ij}=dF_0(X_{ij})$ for $i=0,1$ and $j=1,\ldots,n_{i}$.
Note that the size of the combined sample is $n=n_0+n_1$. 
With (\ref{el.F0}) and under the DRM \eqref{drm_def}, the EL function is given by 
\begin{equation}
\label{L}
    \mathcal{L}_n = \left\{\prod_{i=0}^1\prod_{j=1}^{n_i} p_{ij}\right\}\left\{ \prod_{j=1}^{n_1}\btheta^\top\Q(X_{1j})\right\}.
\end{equation}
The feasible $p_{ij}$'s satisfy two sets of constraints given by 
\begin{equation}
\label{cdf_cons}
\mathcal{C}_1 = \lb (F_0, \btheta): p_{ij} > 0,~
\Sumij p_{ij}=1,~
\Sumij p_{ij}\exp\{\btheta^\top\Q(X_{ij})\}=1 \rb
\end{equation}
and 
\begin{equation}
    \label{ee_cons}
    \mathcal{C}_2 = \lb (F_0, \bpsi, \btheta): \Sumij p_{ij} \g(X_{ij};\bpsi,\btheta)= {\bf 0} \rb,
\end{equation}
where the set of constraints $\mathcal{C}_1$  ensures that  $F_0$ and $F_1$ are CDFs
and the set of constraints $  \mathcal{C}_2$ is induced by the EEs  in  \eqref{ee}. 

Using the Lagrange multiplier method and for the given $\bpsi$ and $\btheta$, it can be shown that the  maximizer of the EL function is given by  
\begin{equation*}
    p_{ij} = \frac{1}{n}\frac{1}{1 + \lambda\left[\exp\{\btheta^\top\Q(X_{ij})\} -1\right] + \bnu^\top \g(X_{ij};\bpsi,\btheta)},
\end{equation*}
where the Lagrange multipliers $\lambda$ and $\bnu= (\nu_1,\cdots,\nu_r)^\top$ are the solutions to the following set of $r+1$ equations:
\begin{eqnarray}
\label{solution_lambda}
\Sumij \frac{\exp\{\btheta^\top\Q(X_{ij})\}-1}{1 + \lambda\left[\exp\{\btheta^\top\Q(X_{ij})\} -1\right] + \bnu^\top \g(X_{ij};\bpsi,\btheta)}  = 0,\\
\label{solution_z}
\Sumij \frac{\g(X_{ij};\bpsi,\btheta)}{1 + \lambda\left[\exp\{\btheta^\top\Q(X_{ij})\} -1\right] + \bnu^\top \g(X_{ij};\bpsi,\btheta)}  = {\bf 0}.
\end{eqnarray}
The profile empirical log-likelihood of $(\bpsi,\btheta)$ is given by 
\begin{equation*}
    \ell_n(\bpsi,\btheta) = -\Sumij \log \left\{1 + \lambda\left[\exp\{\btheta^\top\Q(X_{ij})\} -1\right] + \bnu^\top \g(X_{ij};\bpsi,\btheta)\right\} + \Sumj \btheta^\top\Q(X_{1j}).
\end{equation*}
%
The MELEs of $\bpsi$ and $\btheta$ are then defined as 
$(\hat{\bpsi},\hat{\btheta})= \arg\max_{\bpsi,\btheta} \ell_n(\bpsi,\btheta)$.

We now establish the asymptotic distribution of $(\hat{\bpsi},\hat{\btheta})$.
Let $\etab = (\bpsi^\top,\btheta^\top)^\top$ be the vector of parameters and $\etab^*$ be the true value of $\etab$. Let $\lambda^*=n_1/n$.  
We further define
\begin{eqnarray*}
&& \omega(x; \btheta) = \exp\left\{ \btheta^\top \Q(x)\right\},~~\omega(x) = \omega(x; \btheta^*),~~
~~h(x) = 1 + \lambda^* \left\{ \omega(x) - 1 \right\},
\\
&&h_1(x) = \frac{\lambda_0\omega(x)}{h(x)},~~\G(x;\etab) = (\omega(x; \btheta) - 1, \g(x;\btheta,\bbeta)^\top)^\top,~~~\G(x)=\G(x;\etab^*),\\
&&
\A_{\btheta\btheta} = (1 - \lambda_0) E_0 \lb h_1(X)\Q(x)\Q(x)^\top \rb,
\\
&&\A_{\btheta\bu} = \A_{\bu\btheta}^\top = E_0 \lb \frac{\partial \G(X;\etab_0)}{ \partial \btheta} \rb^\top - E_0 \lb h_1(X) \Q(x) \G(X)^\top\rb,\\
&&\A_{\bpsi\bu} = \A_{\bu\bpsi}^\top = E_0 \lb \frac{\partial \G(X;\etab_0)}{ \partial \bpsi} \rb^\top ,~~~
\A_{\bu\bu} = E_0 \lb \frac{\G(X)\G(X)^\top}{h(X)} \rb.
\end{eqnarray*}
Noting that $\omega(\cdot)$, $h(\cdot)$,  $h_1(\cdot)$ and $G(\cdot)$ depend on $\bpsi^*$ and/or $\btheta^*$, 
we drop these redundant parameters for notational simplicity.
\begin{theorem}
\label{theorem_eta}
Assume that the regularity conditions in the Appendix are satisfied. As the total sample size $n = n_0 + n_1$ goes to infinity, we have
 \begin{equation*}
     n^{1/2} (\hetab - \etab^*) \to 
     N \ls {\bf 0}, \J^{-1} \rs
 \end{equation*} in distribution, where
 \begin{equation*}
    \J = \U\V^{-1}\U^\top,~~  \U = \ba{cc} {\bf 0}& \A_{\bpsi\bu}\\
    \A_{\btheta\btheta}&\A_{\btheta\bu}\ea,~~ \text{and}~~\V = \ba{cc}\A_{\btheta\btheta}&{\bf 0}\\ {\bf 0}& \A_{\bu\bu} \ea.
 \end{equation*}
\end{theorem}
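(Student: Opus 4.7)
The plan is to regard the MELE together with the Lagrange multipliers as the joint solution $(\hetab,\hat{\lambda},\hnu)$ of an extended system of estimating equations, Taylor-expand this system about its population limit, and then extract the marginal distribution of $\hetab$ by block elimination of the dual variables. The first step is to identify the correct reference point for $(\lambda,\bnu)$. Because the DRM gives $E_0[\omega(X;\btheta^*)-1]=0$ and the EEs give $E_0[\g(X;\bpsi^*,\btheta^*)]=\mathbf{0}$, the natural population values are $\lambda=\lambda_0=\lim n_1/n$ and $\bnu=\mathbf{0}$; equivalently, the pooled reference measure $h(x)dF_0(x)=[(1-\lambda_0)+\lambda_0\omega(x)]dF_0(x)$ absorbs the DRM reweighting so that every constraint equation vanishes in expectation at the truth.

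Next, I would collect the four groups of defining equations: the constraints (\ref{solution_lambda})--(\ref{solution_z}) plus the profile scores $\partial\ell_n/\partial\bpsi=\mathbf{0}$ and $\partial\ell_n/\partial\btheta=\mathbf{0}$, where by the envelope theorem the implicit dependence of $(\lambda,\bnu)$ on $(\bpsi,\btheta)$ drops out of the scores at the critical point. Consistency and the $O_p(n^{-1/2})$ rates for the four quantities can be established by the standard EL argument along the lines of \cite{qin1994empirical}, adapted to the two-sample setting by exploiting convexity in $(\lambda,\bnu)$ for fixed $(\bpsi,\btheta)$ together with the asymptotic normality of $n^{-1/2}\sum_{i=0}^{1}\sum_{j=1}^{n_i}\G(X_{ij})/h(X_{ij})$. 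Linearizing the full system about $(\etab^*,\lambda_0,\mathbf{0})$ and computing expected Jacobians via the pooled identity $E_{\text{pooled}}[f/h]=E_0[f]$ together with $\lambda_0 dF_1=\lambda_0\omega(x)dF_0$, the constraint block in $\bu=(\lambda,\bnu^\top)^\top$ equals $-\A_{\bu\bu}$, the $\btheta$-score contributes $-\A_{\btheta\btheta}$ via the DRM piece, and the cross-derivatives assemble into $\A_{\btheta\bu}$ and $\A_{\bpsi\bu}$, recovering the exact block structure of $\V$ and $\U$.

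Eliminating the dual increment $(\hat{\lambda}-\lambda_0,\hnu)$ from the linearized system leaves $\U\V^{-1}\U^\top(\hetab-\etab^*)=\U\V^{-1}\Phi_n+o_p(n^{-1/2})$, where $\Phi_n$ assembles the mean-zero averages $n^{-1}\{\sum_{j=1}^{n_1}\Q(X_{1j})-\sum_{i,j}h_1(X_{ij})\Q(X_{ij})\}$ from the $\btheta$-score and $n^{-1}\sum_{i,j}\G(X_{ij})/h(X_{ij})$ from the constraints. A multivariate CLT for $\Phi_n$, together with Slutsky's theorem and the identification $\J=\U\V^{-1}\U^\top$, then delivers the stated $N(\mathbf{0},\J^{-1})$ limit after left-multiplying by $\J^{-1}$.

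The main technical obstacle is the two-sample bookkeeping: every pooled expectation must be decomposed as $(1-\lambda_0)E_0[\,\cdot\,]+\lambda_0 E_0[\omega(X)\,\cdot\,]$ and repeatedly collapsed via the DRM identity $(1-\lambda_0)+\lambda_0\omega(x)=h(x)$, which is what converts mixed first- and second-order terms into clean expressions involving $h$ and $h_1$. A related subtlety is verifying that the block structure of $\V$ and $\U$ --- in particular, the appearance of $\A_{\btheta\btheta}$ in both matrices --- arises precisely from $\btheta$ entering simultaneously as a primal parameter of interest and, through $\omega(\,\cdot\,;\btheta)$, inside the DRM constraint in the dual block. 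Once this algebra is organized, the remaining CLT and continuous-mapping steps are routine.
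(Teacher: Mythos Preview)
Your overall strategy --- treating $(\hetab,\hat\lambda,\hnu)$ as the joint root of an extended estimating system, Taylor-expanding at $(\etab^*,\lambda^*,\mathbf{0})$, block-eliminating the dual increment, and applying a CLT to the centered score $\Phi_n$ --- is exactly the paper's approach. Your identification of the reference point, of the block Jacobian, and of the linear representation $\J(\hetab-\etab^*)=\U\V^{-1}\Phi_n+o_p(n^{-1/2})$ all match the paper.

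There is, however, a genuine gap in the final step. You write that the CLT for $\Phi_n$ ``delivers the stated $N(\mathbf{0},\J^{-1})$ limit after left-multiplying by $\J^{-1}$,'' which tacitly assumes that $\mathrm{Var}(n^{1/2}\Phi_n)=\V$, so that the sandwich $\J^{-1}\U\V^{-1}\cdot\V\cdot\V^{-1}\U^\top\J^{-1}$ collapses to $\J^{-1}$. But in this two-sample setting that identification fails: the $\btheta$-score $n^{-1}\{\sum_{j}\Q(X_{1j})-\sum_{i,j}h_1(X_{ij})\Q(X_{ij})\}$ is a difference between a sample-$1$ sum and a pooled sum, and a careful computation (decomposing each pooled sum as $(1-\lambda^*)E_0+\lambda^*E_1$) shows that
\[
\mathrm{Var}(n^{1/2}\Phi_n)=\bGamma=\V-\frac{1}{\lambda^*(1-\lambda^*)}\C\C^\top,\qquad
\C=\begin{pmatrix}\A_{\btheta\btheta}\e_{\btheta}\\ -\lambda^*(1-\lambda^*)\A_{\bu\bu}\e_{\bu}\end{pmatrix},
\]
with $\e_\btheta=(1,\mathbf{0}^\top)^\top$ and $\e_\bu=(1,\mathbf{0}^\top)^\top$. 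The rank-one correction does not vanish automatically; what saves the result is the algebraic identity $\U\V^{-1}\C=\mathbf{0}$, which follows from $\A_{\bpsi\bu}\e_\bu=\mathbf{0}$ and $\A_{\btheta\bu}\e_\bu=\{\lambda^*(1-\lambda^*)\}^{-1}\A_{\btheta\btheta}\e_\btheta$. Only after checking this orthogonality does the sandwich $\J^{-1}\U\V^{-1}\bGamma\V^{-1}\U^\top\J^{-1}$ reduce to $\J^{-1}$. This is precisely the ``two-sample bookkeeping'' you flag as the main obstacle, but your write-up stops short of carrying it through; without it the asserted limiting covariance is unjustified.
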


In the absence of the constraints $\mathcal{C}_2$ in \eqref{ee_cons}, we can 
maximize the EL function in \eqref{L} with respect only to the CDF constraints $\mathcal{C}_1$ in \eqref{cdf_cons}
to obtain the MELE $\tilde\btheta$ of $\btheta$.  
\cite{qin1997goodness} and \cite{keziou2008empirical} noticed that 
$\tilde\btheta$ equivalently maximizes the following dual likelihood: 
\begin{equation}
\label{del}
   \ell_{nd}(\btheta) = -\Sumij \log \left\{1+\lambda^* \left[ \exp \left\{ \btheta^\top\Q(X_{ij})\right\} -1\right]\right\} + \Sumj \left\{\btheta^\top\Q(X_{1j})\right\}.
\end{equation}
That is,  $\Tilde{\btheta} = \arg\max_{\btheta} \ell_{nd}(\btheta) $. 

\begin{corollary}
\label{corol_1}
Under the conditions of Theorem 1, 
\begin{itemize}
    \item [(a)] if $ r =p $, the asymptotic variance of $n^{1/2}(\hat{\btheta} - \btheta^*)$ is the same as that of $n^{1/2}(\Tilde{\btheta} - \btheta^*)$; 
    \item [(b)] if $ r>p $, the asymptotic variance matrix of $n^{1/2} (\hetab - \etab^*)$ cannot decrease if an EE in (\ref{ee}) is dropped. 
\end{itemize}
\end{corollary}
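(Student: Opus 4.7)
My plan is to work directly with the block structure $\J=\U\V^{-1}\U^\top$ from Theorem~\ref{theorem_eta}, using two standard ingredients: the Schur complement (block-inverse) identity for $\J^{-1}$, and monotonicity of the quadratic form $BW^{-1}B^\top$ in the Loewner order under joint augmentation of $(B,W)$ by a column and a matching row/column.

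For part (a), I would first recover the asymptotic variance of $\Tilde{\btheta}$ from the dual likelihood \eqref{del} by applying Theorem~\ref{theorem_eta} to the reduced EL problem without the $\mathcal{C}_2$ constraints: then $\bpsi$ drops out, $\G$ collapses to the scalar $\omega(x)-1$, and $\J$ reduces to $\A_{\btheta\btheta}+cc^\top/a$, where $c$ denotes the first column of $\A_{\btheta\bu}$ and $a=(\A_{\bu\bu})_{11}$ are the pieces corresponding to the $\omega-1$ component of $\G$. Part (a) thus reduces to the identity
\[ (\J^{-1})_{\btheta\btheta}=\bigl(\A_{\btheta\btheta}+cc^\top/a\bigr)^{-1}.\]
Plugging the four blocks of $\J$ into $(\J^{-1})_{\btheta\btheta}^{-1}=\J_{\btheta\btheta}-\J_{\btheta\bpsi}\J_{\bpsi\bpsi}^{-1}\J_{\bpsi\btheta}$ and setting $K=\A_{\bu\bu}^{-1/2}\A_{\bpsi\bu}^\top$, the right-hand side becomes $\A_{\btheta\btheta}+\A_{\btheta\bu}\A_{\bu\bu}^{-1/2}\bigl(I-K(K^\top K)^{-1}K^\top\bigr)\A_{\bu\bu}^{-1/2}\A_{\btheta\bu}^\top$. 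The key structural fact is that $\omega(x;\btheta)$ is free of $\bpsi$, so $\A_{\bpsi\bu}=[\mathbf{0},B^\top]$ with $B=E_0[\partial\g/\partial\bpsi]$; when $r=p$, invertibility of $B$ forces $\mathrm{null}(\A_{\bpsi\bu})=\mathrm{span}\{\e_1\}$, one-dimensional. The projector $I-K(K^\top K)^{-1}K^\top$ then collapses to the rank-one projector $\A_{\bu\bu}^{1/2}\e_1\e_1^\top\A_{\bu\bu}^{1/2}/a$, and the quadratic form reduces exactly to $cc^\top/a$, which yields (a).

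For part (b), I would restate the goal as $\J\succeq\J_{\mathrm{red}}$ in the Loewner order whenever one EE is dropped, so that $\J^{-1}\preceq\J_{\mathrm{red}}^{-1}$. Since $\V$ is block-diagonal and $\A_{\btheta\btheta}$ is unaffected, dropping one EE removes a single column from $\U$ and the matching row/column from the $\A_{\bu\bu}$ block of $\V$. Writing $\U=[\U_1,u]$ and $\V=\begin{pmatrix}\V_{11}&v\\v^\top&v_0\end{pmatrix}$ with $\U_1,\V_{11}$ denoting the reduced matrices, a direct Schur-complement computation gives
\[ \U\V^{-1}\U^\top=\U_1\V_{11}^{-1}\U_1^\top+(\U_1\V_{11}^{-1}v-u)\,s^{-1}\,(\U_1\V_{11}^{-1}v-u)^\top,\]
with $s=v_0-v^\top\V_{11}^{-1}v>0$ by positive-definiteness of $\V$. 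The additive term is positive semidefinite, so $\J\succeq\J_{\mathrm{red}}$; iterating handles dropping several EEs.

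The main obstacle is the algebra in part (a): one must carry out a four-block inversion while exploiting the zero-column structure of $\A_{\bpsi\bu}$ forced by $\omega$ being independent of $\bpsi$, and then use the just-identified condition $r=p$ to pin down the one-dimensional null space that makes the projector collapse to rank one. Once that structural reduction is in hand, the remaining linear algebra is routine.
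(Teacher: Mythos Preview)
Your argument is correct for both parts. Part (b) is essentially the paper's proof: the paper also shows $\J_m\succeq\J_{m-1}$ via the Loewner inequality $\V_m^{-1}\succeq\mathrm{diag}(\V_{m-1}^{-1},0)$, citing Qin and Lawless for this fact, whereas you write out the underlying Schur-complement identity explicitly; the content is the same.

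Part (a) takes a genuinely cleaner route than the paper. Both start from the Schur-complement expression $(\J^{-1})_{\btheta\btheta}^{-1}=\A_{\bstheta\bstheta}+\A_{\bstheta\bu}\A_{\bu\bu}^{-1}\A_{\bu\bstheta}-\A_{\bstheta\bu}\A_{\bu\bu}^{-1}\A_{\bu\bspsi}(\A_{\bspsi\bu}\A_{\bu\bu}^{-1}\A_{\bu\bspsi})^{-1}\A_{\bspsi\bu}\A_{\bu\bu}^{-1}\A_{\bu\bstheta}$ and both exploit that the first column of $\A_{\bspsi\bu}$ vanishes. From there the paper inverts $\A_{\bu\bu}$ blockwise, simplifies the middle factor to $\A_{\bu\bu}^{-1}$ minus a one-block correction, uses the identity $\A_{\bstheta\bu}\e_{\bu}=\{\lambda^*(1-\lambda^*)\}^{-1}\A_{\bstheta\bstheta}\e_{\bstheta}$ together with the closed form for $E_0[\{\omega(X)-1\}^2/h(X)]$, and finishes with the Woodbury identity to reach the explicit Qin--Zhang form $\A_{\bstheta\bstheta}^{-1}-\e_{\bstheta}\e_{\bstheta}^\top/\{\lambda^*(1-\lambda^*)\}$. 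You instead recognise $I-K(K^\top K)^{-1}K^\top$ as the orthogonal projector onto $\mathrm{range}(K)^\perp$, observe that when $r=p$ this complement is the one-dimensional space $\mathrm{span}\{\A_{\bu\bu}^{1/2}\e_{\bu}\}$, and read off the rank-one form $cc^\top/a$ directly, matching it to Theorem~\ref{theorem_eta} specialised to the constraint-free problem. Your route avoids both the explicit block inverse of $\A_{\bu\bu}$ and the Woodbury step, at the cost of invoking Theorem~\ref{theorem_eta} in the degenerate case $p=r=0$; you should note explicitly that this specialisation is legitimate (the paper handles this by citing the known variance of $\Tilde{\btheta}$ from Qin and Zhang rather than re-deriving it).
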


We provide some further comments on the results presented in Corollary \ref{corol_1}. First, when the dimensions of the parameters $\bpsi$ and the EEs are equal, 
we can solve 
$$\int \g(X; \bpsi,\Tilde{\btheta}) d\tilde F_0(x) = {\bf 0}$$
to get the estimator $\Tilde{\bpsi}$ of $\bpsi$, 
where $\tilde F_0(x)$ is the MELE of $F_0$ without the constraints $\mathcal{C}_2$ in \eqref{ee_cons}. 
Because of the result in Corollary \ref{corol_1}(a), the estimators $\Tilde{\bpsi}$ and $\hat{\bpsi}$ share the same asymptotic property. 
Second, Corollary \ref{corol_1}(b) indicates that additional auxiliary information leads to more efficient estimation of $\etab$. 

The proposed semiparametric method provides a way to find the point estimator of the unknown parameters, 
which has the asymptotic normality analogue to the parametric estimator. 
The semiparametric framework also creates a natural platform for hypothesis tests using the ELR statistic.
We consider a general null hypothesis
\begin{equation*}
   H_0: \H(\etab) = {\bf 0},
\end{equation*}
where the function $\H(\cdot)$ is $q\times1$ with $q\leq p+d+1$, and the derivative of this function is of rank $q$. 
This null hypothesis forms a third set of constraints
\begin{equation*}
    \mathcal{C}_3 = \lb \etab=(\bpsi^\top,\btheta^\top)^\top: \H(\etab) = {\bf 0} \rb.
\end{equation*}
The ELR statistic for testing $H_0$ is then defined as
\begin{equation*}
    R_n = 2\lb \sup_{\bpsi,\btheta} \ell_n(\bpsi,\btheta) - \sup \limits_{\etab \in \mathcal{C}_3}  \ell_n(\bpsi,\btheta)  \rb.
\end{equation*}
%
%
The next theorem establishes the asymptotic property of the ELR statistic $R_n$ under the null hypothesis $H_0$. 
\begin{theorem}
\label{theorem_lr_eta}
Assume  that the conditions  of Theorem \ref{theorem_eta} hold. Under $H_0$,  as $n\to\infty$,  
the ELR statistic $R_n \to \chi^2_q$ in distribution.
\end{theorem}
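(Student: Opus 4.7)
The plan is to reduce $R_n$ to a quadratic form in $\sqrt{n}(\hetab-\etab^*)$ via a local quadratic expansion of $\ell_n$, and then extract the $\chi^2_q$ limit from Theorem \ref{theorem_eta} by a Lagrangian analysis of the linearized constraint $\H(\etab)=\mathbf{0}$.

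First I would extend the expansion that underlies Theorem \ref{theorem_eta} to show that for $\etab$ in an $n^{-1/2}$-neighborhood of $\etab^*$,
\[ \ell_n(\etab)-\ell_n(\hetab) \;=\; -\tfrac{n}{2}(\etab-\hetab)^\top \J (\etab-\hetab) + o_p(1). \]
This requires proving that for each such $\etab$ the Lagrange multipliers $(\lambda(\etab),\bnu(\etab))$ solving \eqref{solution_lambda}--\eqref{solution_z} are $O_p(n^{-1/2})$ uniformly, and then Taylor expanding the log terms in $\ell_n$ to second order; the quadratic matrix that emerges is precisely $n\J$, by the same block reduction involving $\U$ and $\V$ that defines $\J$ in Theorem \ref{theorem_eta}.

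Next, let $\dot{\H}=\partial\H(\etab^*)/\partial\etab^\top$, which is $q\times(p+d+1)$ of row rank $q$, and note that $\H(\etab^*)=\mathbf{0}$ under $H_0$. The constrained MELE $\hetab_H=\arg\max_{\etab\in\mathcal{C}_3}\ell_n$ is then $n^{-1/2}$-consistent (by a compactness-plus-strict-concavity argument applied to the quadratic approximation from Step~1), and the linearization of $\H(\hetab_H)=\mathbf{0}$ gives $\dot{\H}(\hetab_H-\etab^*)=o_p(n^{-1/2})$. Combining with Step~1, maximizing $\ell_n$ over $\mathcal{C}_3$ is asymptotically equivalent to minimizing $(\etab-\hetab)^\top \J (\etab-\hetab)$ subject to $\dot{\H}(\etab-\etab^*)=\mathbf{0}$. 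The standard Lagrangian calculation yields
\[ \hetab_H-\hetab \;=\; -\J^{-1}\dot{\H}^\top\bigl(\dot{\H}\J^{-1}\dot{\H}^\top\bigr)^{-1}\dot{\H}(\hetab-\etab^*) + o_p(n^{-1/2}), \]
and hence
\[ R_n \;=\; n(\hetab-\etab^*)^\top\dot{\H}^\top\bigl(\dot{\H}\J^{-1}\dot{\H}^\top\bigr)^{-1}\dot{\H}(\hetab-\etab^*) + o_p(1). \]
Since Theorem \ref{theorem_eta} gives $\sqrt{n}\dot{\H}(\hetab-\etab^*)\to N(\mathbf{0},\dot{\H}\J^{-1}\dot{\H}^\top)$, the continuous mapping theorem delivers $R_n\to\chi^2_q$.

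The main obstacle will be the uniform control of the Lagrange multipliers in Step~1. The profile log-likelihood $\ell_n(\bpsi,\btheta)$ is defined only implicitly through $\lambda(\etab)$ and $\bnu(\etab)$, which solve a coupled system whose Jacobian mixes the DRM constraint and the EE block; establishing smoothness, uniform boundedness, and the $O_p(n^{-1/2})$ rate of these multipliers over an $n^{-1/2}$-neighborhood of $\etab^*$ is the technical core of the proof, analogous to but more intricate than the classical multiplier estimates of \cite{qin1994empirical}, because of the interaction between the EE block $\A_{\bu\bu}$ and the DRM block $\A_{\btheta\btheta}$. Once that is in hand, Steps~2--4 are essentially algebraic.
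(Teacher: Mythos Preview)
Your proposal is correct and follows essentially the same route as the paper: a local quadratic expansion of the profile log-likelihood with curvature $n\J$, a Lagrangian/projection analysis of the linearized constraint $\H(\etab)=\mathbf 0$, and identification of the resulting quadratic form as $\chi^2_q$ via Theorem~\ref{theorem_eta}. The paper's final expression is in terms of the score $\S_n^*$ rather than $\hetab-\etab^*$, but the two are linked by $n^{1/2}(\hetab-\etab^*)=\J^{-1}\U\V^{-1}(n^{-1/2}\S_n^*)+o_p(1)$, so the displayed $R_n$ formulas coincide.

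One technical difference worth noting: the paper avoids what you flag as ``the main obstacle'' (uniform control of the implicit multipliers $\lambda(\etab),\bnu(\etab)$ over an $n^{-1/2}$-neighborhood) by reversing the order of operations. Rather than profiling out $(\lambda,\bnu)$ first and then expanding $\ell_n(\etab)$, it Taylor-expands the \emph{un-profiled} function $l_n(\bpsi,\btheta,\lambda,\bnu)$ jointly in all arguments around $(\etab^*,\lambda^*,\mathbf 0)$, and only then minimizes the resulting quadratic in $(\lambda,\bnu)$. This yields the same quadratic in $\etab$ with Hessian $n\J$, but requires only the single-point consistency $\hat\bgamma-\bgamma^*=O_p(n^{-1/2})$ (already needed for Theorem~\ref{theorem_eta}) plus standard $o_p(1)$ control of the Hessian, not a uniform-in-$\etab$ multiplier bound. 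You may find this ordering simplifies your Step~1 considerably.
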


The result of Theorem \ref{theorem_lr_eta} is very general due to the general form of the function $\H(\cdot)$. 
First, it is applicable to testing problems that focus on some of the parameters in  $\etab$. 
For example, if we wish to test $H_0: \bpsi = \bpsi_0$, we can choose $\H(\etab) =\bpsi - \bpsi_0 $. 
Let $R_n^*(\bpsi)$ be the  ELR function of $\bpsi$. That is, 
$$
R_n^*(\bpsi)=2\lb \sup \limits_{\bpsi,\btheta} \ell_n(\bpsi,\btheta) - \sup \limits_{\btheta}  \ell_n(\bpsi,\btheta)  \rb.
$$ 
Then $R_n^*(\bpsi_0)$  has a  chi-squared null limiting distribution with $p$ degrees of freedom. 
Second, the result can be used to construct confidence regions for some of the parameters in  $\etab$. 
For example, we can construct an ELR-based confidence region for the parameter $\bpsi$ at the  nominal level $1-a$ as
\begin{equation}
\label{lr_CI}
    \{\bpsi: R_n^*(\bpsi)\leq \chi^2_{q,1-a} \},
\end{equation}
where $ \chi^2_{q,1-a}$ is the $100(1-a)$th quantile of the $\chi^2_q$ distribution.

The use of valid auxiliary information leads to improved inference on $\etab$. 
However, if the information is not properly specified in terms of unbiased estimating functions, the resulting estimator of $\etab$ may be biased \citep{qin2014using}. Our last major theoretical result is to construct an ELR statistic for testing the validity and usefulness of the auxiliary information. Let 
\begin{equation}
\label{LRv_def}
    W_n = 2\lb \sup \limits_{(\etab,F_0) \in \mathcal{C}_1} \log \mathcal{L}_n - \sup \limits_{(\etab,F_0) \in \mathcal{C}_1 \cap \mathcal{C}_2} \log \mathcal{L}_n \rb
    = 2\lb \ell_{nd}(\Tilde{\btheta}) - \ell_n(\hat\bpsi,\hat\btheta) \rb.
\end{equation}

\begin{theorem}
\label{theorem_valid}
 Under the conditions of Theorem \ref{theorem_eta} and as $n\to\infty$, we have 
 $W_n \to \chi^2_{r-p}$
 in distribution  if  \eqref{ee} is correctly specified. 
\end{theorem}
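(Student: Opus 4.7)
The strategy is to reduce $W_n$ to a quadratic form in an asymptotically Gaussian vector of rank exactly $r-p$ and then invoke a Cochran-type representation. The argument is the two-sample DRM analogue of Qin and Lawless's (1994) test for over-identifying restrictions in one-sample EL, and it can be built on the stochastic expansions already needed for Theorems \ref{theorem_eta} and \ref{theorem_lr_eta}.

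Step 1: \emph{Expansion of $\ell_{nd}(\tilde\btheta)$.} Using \eqref{del}, a standard dual-EL Taylor argument gives
\begin{equation*}
2\{\ell_{nd}(\tilde\btheta) - \ell_{nd}(\btheta^*)\} = S_{1n}^\top \A_{\btheta\btheta}^{-1} S_{1n} + o_p(1),
\end{equation*}
where $S_{1n} = n^{-1/2}\partial\ell_{nd}(\btheta^*)/\partial\btheta$ is asymptotically $N(\mathbf{0}, \A_{\btheta\btheta})$.

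Step 2: \emph{Expansion of $\ell_n(\hat\bpsi,\hat\btheta)$.} Jointly linearise the score equations $\partial\ell_n/\partial\bpsi=\mathbf{0}$ and $\partial\ell_n/\partial\btheta=\mathbf{0}$ together with the multiplier equations \eqref{solution_lambda}--\eqref{solution_z} around $(\bpsi^*,\btheta^*,\lambda^*,\mathbf{0})$. Because \eqref{ee} is correctly specified and $E_0\{\omega(X)-1\}=0$, these are the right asymptotic targets, and each of $\hat\bpsi-\bpsi^*$, $\hat\btheta-\btheta^*$, $\hat\lambda-\lambda^*$, and $\hat\bnu$ admits an $O_p(n^{-1/2})$ stochastic expansion as a linear functional of $\sqrt n\,\bar\G_n$, where $\bar\G_n = n^{-1}\Sumij \G(X_{ij})$. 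Substituting back yields
\begin{equation*}
2\{\ell_n(\hat\bpsi,\hat\btheta) - \ell_{nd}(\btheta^*)\} = (\sqrt{n}\,\bar\G_n)^\top M (\sqrt{n}\,\bar\G_n) + o_p(1),
\end{equation*}
for an explicit symmetric $M$ assembled from the blocks $\A_{\btheta\btheta}$, $\A_{\btheta\bu}$, $\A_{\bpsi\bu}$, $\A_{\bu\bu}$ defined before Theorem \ref{theorem_eta}.

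Step 3: \emph{Combine and identify the projection.} Subtracting and rescaling by $V_n = \A_{\bu\bu}^{-1/2}\sqrt n\,\bar\G_n$, which converges to $N(\mathbf{0},\I)$ under correct specification, one obtains $W_n = V_n^\top \Pi V_n + o_p(1)$, where $\Pi$ is built from the block structure of $\J = \U\V^{-1}\U^\top$. A partitioned-inverse computation should then verify $\Pi^2 = \Pi$ with $\operatorname{tr}(\Pi) = r-p$, from which $W_n \to \chi^2_{r-p}$ follows.

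The main obstacle will be Step 3: the block matrix bookkeeping needed to confirm that $\Pi$ is idempotent of rank exactly $r-p$. The intuition is transparent---the $r+1$ multiplier directions $(\lambda,\bnu)$ are tied down by the $d+1$ DRM score directions and the $p$ further $\bpsi$-score directions, leaving $r-p$ free residuals---but translating this count into an explicit formula for $\Pi$ requires carefully iterating Schur complements of $\A_{\btheta\btheta}$ and $\A_{\bu\bu}$ inside $\V$, and exploiting the orthogonality induced by $E_0\{\omega(X)-1\}=0$ to decouple the $\omega(X)-1$ component of $\G$ from the $\g$ component. Steps 1 and 2 are of the same flavour as the expansions underlying Theorems \ref{theorem_eta} and \ref{theorem_lr_eta} and should not introduce new technical difficulties.
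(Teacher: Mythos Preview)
Your outline follows the same three-stage structure as the paper (expand both log-likelihoods, subtract, identify an idempotent projection), but Steps 2 and 3 contain a real gap that is not just bookkeeping.

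First, the expansion of $\ell_n(\hat\bpsi,\hat\btheta)-\ell_{nd}(\btheta^*)$ cannot be written as a quadratic form in $\sqrt n\,\bar\G_n$ alone. The relevant score vector is $\S_n^*=(\S_{n\btheta}^\top,\S_{n\bu}^\top)^\top$ with $\S_{n\bu}=-\Sumij \G(X_{ij})/h(X_{ij})$, and the $\btheta$-score $\S_{n\btheta}$ genuinely enters; it is built from $\Q(X_{ij})$, not from $\G$. Moreover, your $\bar\G_n=n^{-1}\Sumij\G(X_{ij})$ is not even centred under the two-sample design: $E\{\Sumij\G(X_{ij})\}=nE_0\{\G(X)h(X)\}\ne\mathbf 0$ in general, so $\A_{\bu\bu}^{-1/2}\sqrt n\,\bar\G_n\not\to N(\mathbf 0,\I)$.

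Second, and more importantly, even after you obtain the paper's correct representation
\[
W_n=\tfrac1n\,\S_n^{*\top}\V^{-1}(\V-\U^\top\J^{-1}\U)\V^{-1}\S_n^*+o_p(1),
\]
a direct rank count goes wrong: $\V^{-1/2}(\V-\U^\top\J^{-1}\U)\V^{-1/2}$ is idempotent of rank $(d+r+2)-(p+d+1)=r-p+1$, not $r-p$. The missing ingredient is that $n^{-1/2}\S_n^*$ has \emph{degenerate} limiting variance $\bGamma=\V-\{\lambda^*(1-\lambda^*)\}^{-1}\C\C^\top$ with $\C=(\e_{\btheta}^\top\A_{\btheta\btheta},\,-\lambda^*(1-\lambda^*)\e_{\bu}^\top\A_{\bu\bu})^\top$, and the paper exploits the exact algebraic identity $\C^\top\V^{-1}\S_n^*=0$ (not merely an orthogonality in expectation) together with $\U\V^{-1}\C=\mathbf 0$. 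Concretely, it builds a $\V^{-1}$-orthogonal basis $\M^\top=(\C^*,\C,\U^\top)$, with $\C^*$ spanning the $(r-p)$-dimensional complement, obtains $\V-\U^\top\J^{-1}\U=\C^*(\C^{*\top}\V^{-1}\C^*)^{-1}\C^{*\top}+\C(\C^\top\V^{-1}\C)^{-1}\C^\top$, and then the $\C$-piece vanishes identically against $\S_n^*$. This kills exactly one degree of freedom and yields rank $r-p$. Your Step 3, which standardises by $\A_{\bu\bu}$ (or $\V$) and tries to read the rank off $\J=\U\V^{-1}\U^\top$ alone, will overshoot by one unless you build in this degeneracy.
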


We can also test the validity of some but not all of the EEs in  \eqref{ee}. 
To do so,  we partition the EEs in  \eqref{ee}  into two parts:
\begin{equation*}
    \g(x;\bpsi,\btheta) = \ba{c} \g_1(x;\bpsi,\btheta)\\\g_2(x;\bpsi,\btheta)\ea,
\end{equation*}
where $\g_1(\cdot)$ and $\g_2(\cdot)$ are of dimension $r-m$ and $m$ with $r-m\geq p$. 
 We are interested in testing  $H_0: E_0\{ \g_2(X;\bpsi,\btheta)\}={\bf 0}$.
 Let $\ell_{n1}(\bpsi,\btheta)$ be the profile empirical log-likelihood of $(\bpsi,\btheta)$ that uses 
 the auxiliary information only through $E_0\{ \g_1(x;\bpsi,\btheta)\}={\bf 0}$. 
That is, 
\begin{equation*}
    \ell_{n1}(\bpsi,\btheta) = -\Sumij \log \left\{1 + \lambda\left[\exp\{\btheta^\top\Q(X_{ij})\} -1\right] + \bnu^\top_1 \g_1(X_{ij};\bpsi,\btheta)\right\} + \Sumj \btheta^\top\Q(X_{1j}),
\end{equation*}
where $\lambda$ and $\bnu_1$ are the solution to 
\begin{eqnarray*}
\Sumij \frac{\exp\{\btheta^\top\Q(X_{ij})\}-1}{1 + \lambda\left[\exp\{\btheta^\top\Q(X_{ij})\} -1\right] + \bnu^\top_1 \g_1(X_{ij};\bpsi,\btheta)}  = 0,\\
\Sumij \frac{\g(X_{ij};\bpsi,\btheta)}{1 + \lambda\left[\exp\{\btheta^\top\Q(X_{ij})\} -1\right] + \bnu^\top_1 \g_1(X_{ij};\bpsi,\btheta)}  = {\bf 0}.
\end{eqnarray*}
Then the ELR statistic for  testing $H_0: E_0\{ \g_2(X;\bpsi,\btheta)\}={\bf 0}$ can be constructed similar to \eqref{LRv_def} as
\begin{equation*}
   W_{n}^* = 2\lb \sup \limits_{\bpsi,\btheta}   \ell_{n1}(\bpsi,\btheta)   -  \sup \limits_{\bpsi,\btheta}   \ell_{n}(\bpsi,\btheta)  \rb.
\end{equation*}
\begin{corollary}
\label{corol_valid}
 Under the conditions of Theorem \ref{theorem_eta} and as $n\to\infty$, we have 
 $
 W_{n}^*\to\chi^2_m
 $
  if $E_0\{ \g_2(X;\bpsi,\btheta)\}={\bf 0}$ is true. 
\end{corollary}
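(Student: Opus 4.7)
\textbf{Proof proposal for Corollary \ref{corol_valid}.} The plan is to express $W_n^*$ as the difference of two statistics of the form treated in Theorem \ref{theorem_valid} and to invoke a Cochran-type decomposition for nested quadratic forms.

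First, I introduce
$$
W_{n,1} \;=\; 2\Bigl\{\ell_{nd}(\tilde\btheta) - \sup_{\bpsi,\btheta}\ell_{n1}(\bpsi,\btheta)\Bigr\},
\qquad
W_{n}   \;=\; 2\Bigl\{\ell_{nd}(\tilde\btheta) - \sup_{\bpsi,\btheta}\ell_{n}(\bpsi,\btheta)\Bigr\},
$$
so that by construction
$$
W_n^* \;=\; W_n - W_{n,1}.
$$
Because $\ell_{n1}$ has exactly the same structure as $\ell_n$ but with the estimating function $\g$ replaced by its $(r-m)$-dimensional sub-vector $\g_1$, Theorem \ref{theorem_valid} applies verbatim to each of the two pieces when $H_0$ holds (so that both $E_0\{\g_1\}=\mathbf{0}$ and $E_0\{\g_2\}=\mathbf{0}$), giving $W_n\to\chi^2_{r-p}$ and $W_{n,1}\to\chi^2_{(r-m)-p}$ marginally. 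The target conclusion $W_n^*\to\chi^2_m$ then follows, formally, by matching degrees of freedom, but the real work is to show that the two quadratic forms are compatibly nested so that their difference is itself a chi-square.

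Second, I would revisit the proof of Theorem \ref{theorem_valid} to extract its quadratic-form representation. A standard empirical-likelihood Taylor expansion around $(\bpsi^*,\btheta^*)$ and around the Lagrange multipliers at $(\lambda^*,\mathbf{0})$ yields, uniformly on shrinking neighborhoods,
$$
W_n \;=\; n\,\bar\G_n^\top \V^{-1/2}\, \bigl(\I - \P\bigr)\, \V^{-1/2} \bar\G_n + o_p(1),
$$
where $\bar\G_n = n^{-1}\Sumij \G(X_{ij})/h(X_{ij})^{1/2}$ (suitably standardized), $\V$ is the covariance structure appearing in Theorem \ref{theorem_eta}, and $\P$ is an orthogonal projection of rank $p+d+1$ onto the column space spanned by the score directions $\U^\top$. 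The same expansion applied to $W_{n,1}$ produces an analogous representation $n\,\bar\G_{1n}^\top \V_1^{-1/2}(\I-\P_1)\V_1^{-1/2}\bar\G_{1n}+o_p(1)$ using only the $\g_1$ block; crucially, $\bar\G_{1n}$ is the sub-vector of $\bar\G_n$ corresponding to the first $r-m$ components together with the $\omega-1$ component, and $\P_1$ is the corresponding projection on this sub-space.

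Third, I would show that the quadratic form for $W_n$ dominates that for $W_{n,1}$ in the Loewner sense, so that $W_n-W_{n,1}$ is itself a quadratic form in the same asymptotically Gaussian vector $\V^{-1/2}\bar\G_n\to N(\mathbf{0},\I_{r+1})$ whose associated symmetric idempotent matrix has rank exactly $m$. The key algebraic identity is that the projector $(\I-\P)$ for the full EE system, when restricted to the coordinates entering $\g_1$, equals $(\I-\P_1)$ up to a rank-$m$ perturbation in the $\g_2$-directions; this identity is exactly what forces the difference to be a $\chi^2_m$. This rank accounting is the main obstacle: it requires a careful block decomposition of the matrices $\U$, $\V$, and the constraint Jacobian $E_0(\partial\G/\partial\etab)$ into the $\g_1$ and $\g_2$ components and verifying that the additional $m$ rows of $\g_2$ contribute exactly $m$ linearly independent directions after projecting out the score space. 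Once this rank identity is established, standard Gaussian quadratic-form theory yields $W_n^*\to\chi^2_m$ in distribution under $H_0$.
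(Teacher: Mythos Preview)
Your strategy is exactly the paper's: write $W_n^*=W_n-W_{n,1}$, pull the quadratic-form approximations of both pieces from the proof of Theorem~\ref{theorem_valid}, and argue that the two idempotents are nested so the difference is $\chi^2_m$. The paper carries this out with the representation $W_n=n^{-1}\S_n^{*\top}\V^{-1}(\V-\U^\top\J^{-1}\U)\V^{-1}\S_n^*+o_p(1)$ and the analogous expression in $(\S_{n1}^*,\U_1,\V_1,\J_1)$, then shows the Loewner ordering
\[
\V^{-1}(\V-\U^\top\J^{-1}\U)\V^{-1}\ \ge\ \begin{pmatrix}\V_1^{-1}(\V_1-\U_1^\top\J_1^{-1}\U_1)\V_1^{-1}&\mathbf{0}\\ \mathbf{0}&0\end{pmatrix}
\]
(reusing the block inequalities \eqref{coro1_V_inequality}--\eqref{coro1_J_inequality} from Corollary~\ref{corol_1}) and appeals to \cite{qin1994empirical} for the Cochran-type conclusion.

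One concrete correction to your sketch: the asymptotic quadratic form is \emph{not} in a pure $\G$-average. The relevant score is $\S_n^*=(\S_{n\btheta}^\top,\S_{n\bu}^\top)^\top$, which carries the $\btheta$-score block $\S_{n\btheta}$ in addition to $\S_{n\bu}=-\sum_{i,j}\G(X_{ij})/h(X_{ij})$; your $\bar\G_n$ drops this piece. Relatedly, the projector $\I-\P$ you describe has rank $r-p+1$, not $r-p$: the extra direction is $\V^{-1}\C$ with $\C=(\e_\btheta^\top\A_{\btheta\btheta},\,-\lambda^*(1-\lambda^*)\e_\bu^\top\A_{\bu\bu})^\top$, and it is eliminated by the exact identity $\C^\top\V^{-1}\S_n^*=0$ (this is the step in the proof of Theorem~\ref{theorem_valid} where the orthogonal complement $\C^*$ is introduced). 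If you don't track this $\C$-direction, your rank bookkeeping for the nested idempotents will be off by one in both $W_n$ and $W_{n,1}$, though the discrepancy cancels in the difference. Once you use the correct $\S_n^*$ and account for $\C$, your plan goes through and coincides with the paper's argument.
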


\section{Inferences on CDFs and quantiles}
\label{gee_cdf.quantile}
In this section, we discuss inferences on the CDFs $F_0$ and $F_1$ and their quantiles. 
For convenience of presentation, we assume that the dimension of $X_{ij}$ is one. 

We first construct point estimators of $F_0$ and $F_1$. 
Let $\hat\lambda$ and $\hat\bnu$ be the solutions to \eqref{solution_lambda} and \eqref{solution_z}
with $(\bpsi,\btheta)$ replaced by $(\hat\bpsi,\hat\btheta)$. 
The MELEs of $p_{ij}$ are then given as 
$$
\hat p_{ij}=     \frac{1}{n}\frac{1}{1 +\hat \lambda\left[\exp\{\hat \btheta^\top\Q(X_{ij})\} -1\right] + \hat \bnu^\top \g(X_{ij};\hat\bpsi,\hat\btheta)}.
$$
The MELEs of $F_0$ and $F_1$ are then defined as 
$$
\hat F_0(x)=\Sumij\hat p_{ij} I(X_{ij}\leq x) \;\;\;\;
{\rm and} \;\;\;\;
\hat F_1(x)=\Sumij\hat p_{ij}\exp\{\hat \btheta^\top\Q(X_{ij})\} I(X_{ij}\leq x).
$$

We now present results on the asymptotic properties of the MELEs $\hat F_0(x)$ and $\hat F_1(x)$ of the two population CDFs $F_0(x)$ and $F_1(x)$.  Let 
$$
 \W = \V^{-1} \U^\top \J^{-1}\U\V^{-1} - \ba{cc} {\bf 0}&{\bf 0}\\ {\bf 0} & \A_{\bu\bu}^{-1}\ea, ~~
 \B_0^*(x) = \ba{c}\B_{0\btheta}(x)\\\B_{0\bu}(x) \ea, ~~
 \B_1^*(x) = \ba{c}  \B_{1\btheta}(x)\\\B_{1\bu}(x) \ea ,
$$
where 
\begin{eqnarray*}
 &&\B_{0\btheta}(x) =E_0\lb h_1(X)\Q(X) I(X \leq x) \rb , \hspace{1.6cm}
 \B_{0\bu}(x) = E_0\lb \frac{\G(X)}{h(X)}I(X\leq x) \rb, \\
 &&\B_{1\btheta}(x) = \frac{\lambda^*-1}{\lambda^*}E_0\lb h_1(X)\Q(X) I(X \leq x) \rb,~~~
 \B_{1\bu}(x) = E_0\lb \frac{\omega(X)\G(X)}{h(X)}I(X\leq x) \rb.
\end{eqnarray*}
Furthermore, let $\tilde F_0(x)$ and $\tilde F_1(x)$ be the MELEs of $F_0$ and $F_1$ under the DRM when there is no auxiliary information. 
We refer to \cite{qin1997goodness} for the forms of $\tilde F_0(x)$ and $\tilde F_1(x)$ and their asymptotic properties. 
Denote $x \wedge y=\min(x,y)$.

\begin{theorem}
\label{theoremMELE.cdf}
Assume that the conditions of Theorem \ref{theorem_eta} are satisfied. 
\begin{enumerate}
\item[(a)]
 For any $l,s \in \{0,1\}$ and real numbers $x$ and $y$ in the support of $F_0$,
 as $n\to\infty$, 
$$
\sqrt{n}
\left(
\begin{array}{c}
\hat{F}_l(x) - F_l(x)\\
\hat{F}_s(y) - F_s(y)\\
\end{array}
\right)
\to 
N\Big({\bf 0}, \bSigma_{ls}(x,y) \Big), 
$$ 
where 
\begin{equation*}
   \bSigma_{ls}(x,y)  = \ba{cc} \sigma_{ll}(x,x) &\sigma_{ls}(x,y) \\ \sigma_{sl}(y,x) &\sigma_{ss}(y,y) \ea
\end{equation*}
with 
\begin{equation*}
         \sigma_{ij}(x,y) = E_0\left\{ \frac{  \omega^{i+j}(X) I(X\leq x \wedge y)}{h(X)} \right\} - F_i(x)F_j(y)+ \B^*_i(x)^\top \W \B^*_j(y)
     \end{equation*}
for any $i,j \in \{l,s\}$. 
\item[(b)]  If $r = p$, the asymptotic variance-covariance matrix $\bSigma_{ls}(x,y)$ reduces to the same one of $\sqrt{n}\big(\tilde{F}_l(x) - F_l(x), \tilde{F}_s(x) - F_s(x) \big)^\top$. 
\item[(c)] If $r>p$, the asymptotic variance  matrix $\bSigma_{ls}(x,y)$  cannot decrease if an EE in \eqref{ee} is dropped. 
\end{enumerate} 
\end{theorem}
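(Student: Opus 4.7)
The plan is to prove part (a) by deriving a stochastic expansion of $\hat F_l(x)-F_l(x)$ as an average of i.i.d.\ summands to which the multivariate CLT applies, and to obtain parts (b) and (c) by comparing the resulting variance with the no-auxiliary-information benchmark of \cite{qin1997goodness} through block-matrix algebra.

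For part (a), I would start from $\hat F_l(x)=\Sumij\hat p_{ij}\omega^l(X_{ij};\hat\btheta)I(X_{ij}\leq x)$ and plug in the closed form of $\hat p_{ij}$. Writing $\hat\bu=(\hat\lambda,\hat\bnu^\top)^\top$ with population analogue $\bu^*=(\lambda^*,{\bf 0}^\top)^\top$, a Taylor expansion around $(\etab^*,\bu^*)$ gives, after grouping empirical-process, parameter, and multiplier contributions,
\begin{equation*}
\sqrt{n}\bigl(\hat F_l(x)-F_l(x)\bigr)=\frac{1}{\sqrt{n}}\Sumij\bigl\{\omega^l(X_{ij})I(X_{ij}\leq x)-F_l(x)\bigr\}-\sqrt{n}\,\B^*_l(x)^\top(\hat\bu-\bu^*)+\sqrt{n}\,c_l(x)^\top(\hat\etab-\etab^*)+o_p(1),
\end{equation*}
where $c_l(x)$ and $\B^*_l(x)$ are deterministic sensitivity vectors that match those in the theorem statement. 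Substituting the joint stochastic expansion of $(\hat\etab,\hat\bu-\bu^*)$ derived in the proof of Theorem \ref{theorem_eta} produces a single i.i.d.\ representation $\sqrt n(\hat F_l(x)-F_l(x))=n^{-1/2}\Sumij\phi_l(X_{ij};x)+o_p(1)$. Applying the Lindeberg--L\'evy CLT to the stacked pair $(\phi_l(\cdot;x),\phi_s(\cdot;y))$ yields joint asymptotic normality, and a direct covariance computation identifies the empirical-process piece $E_0\{\omega^{i+j}(X)I(X\leq x\wedge y)/h(X)\}-F_i(x)F_j(y)$ and the quadratic form $\B^*_i(x)^\top\W\B^*_j(y)$ coming from the multiplier expansion through the sandwich $\W=\V^{-1}\U^\top\J^{-1}\U\V^{-1}-\mathrm{diag}({\bf 0},\A_{\bu\bu}^{-1})$.

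For part (b), when $r=p$ the matrix $\A_{\bpsi\bu}$ is square and the EEs are just-identifying for $\bpsi$, so they impose no overidentifying restriction on $F_0$. A Schur-complement manipulation on $\J$ then shows that the correction $\B^*_l(x)^\top\W\B^*_l(x)$ collapses to the single-block expression for the DRM-only MELE $\tilde F_l$ of \cite{qin1997goodness}, establishing first-order equivalence of $\hat F_l$ and $\tilde F_l$ and hence equality of the two covariance matrices. For part (c), the argument parallels Corollary \ref{corol_1}(b): partition $\A_{\bu\bu}$ according to the kept and dropped EEs, and use the standard block inequality for inverses of positive-definite matrices to conclude that removing a block can only enlarge the quadratic form $\B^*_i(x)^\top\W\B^*_j(y)$, so that $\bSigma_{ls}(x,y)$ cannot decrease in the Loewner order.

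The principal obstacle I anticipate is the bookkeeping in the linearization of part (a): one must verify that the $(\hat\etab-\etab^*)$ term and the $(\hat\bu-\bu^*)$ term in the Taylor expansion combine cleanly into a single quadratic form in $\W$ after invoking the expansion from Theorem \ref{theorem_eta}, because $\B^*_l(x)$ packs sensitivities with respect to both $\btheta$ and $\bu$, and the cancellations are driven precisely by the first-order conditions \eqref{solution_lambda}--\eqref{solution_z} evaluated at $(\hat\etab,\hat\bu)$. Once this expansion is stabilized, the CLT step and the matrix comparisons for parts (b) and (c) become essentially routine.
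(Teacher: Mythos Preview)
Your strategy coincides with the paper's: Taylor-expand $\hat F_l(x)$ in $\bgamma=(\etab^\top,\bu^\top)^\top$ around $\bgamma^*$, substitute the linearization of $(\hat\etab,\hat\bu)$ obtained in the proof of Theorem~\ref{theorem_eta}, and read off the covariance; parts (b) and (c) then follow from the same block-matrix inequalities as Corollary~\ref{corol_1}. So the architecture is right.

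The displayed expansion, however, contains two concrete errors that would derail the computation. First, the zeroth-order term of $\hat p_{ij}$ at $\bgamma^*$ is $\{nh(X_{ij})\}^{-1}$, not $n^{-1}$, so the leading empirical-process piece must be $n^{-1/2}\Sumij\bigl\{\omega^l(X_{ij})I(X_{ij}\leq x)/h(X_{ij})-F_l(x)\bigr\}$; without the $h(X_{ij})^{-1}$ you cannot recover the $E_0\{\omega^{i+j}(X)I(X\leq x\wedge y)/h(X)\}$ term you later claim. Second, in the paper $\B^*_l(x)$ already stacks the $\btheta$- and $\bu$-derivatives of $F_l(x,\bgamma)$ at $\bgamma^*$, so it cannot multiply $(\hat\bu-\bu^*)$ alone with a separate $c_l(x)$ on $(\hat\etab-\etab^*)$; align your sensitivities with the paper's $\B_{l\btheta}(x),\B_{l\bu}(x)$ or the algebra that delivers $\W$ will not close. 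Also, the pooled sample is two independent samples, not i.i.d., so the CLT and all covariance calculations go through the two-sample identity $E\bigl\{\Sumij\mathcal{S}(X_{ij})\bigr\}=nE_0\{h(X)\mathcal{S}(X)\}$ rather than Lindeberg--L\'evy; the paper's computation of $Cov\{F_l(x,\bgamma^*),\S_n^*\}$ and the cancellation via $\U\V^{-1}\C={\bf 0}$ is where most of the work in (a) actually sits, and your sketch underweights it. For (b), the paper does not Schur-complement $\J$; when $r=p$ the matrix $(\C,\U^\top)$ is square, which yields the decomposition $\V=\{\lambda^*(1-\lambda^*)\}^{-1}\C\C^\top+\U^\top\J^{-1}\U$ and hence a closed form for $\W$ that matches the DRM-only covariance of \cite{chen2013quantile}.
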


Theorem \ref{theoremMELE.cdf} indicates that the MELEs $\hat F_0(x)$ and $\hat F_1(x) $
have asymptotic properties similar to those of $\hat\etab$. 
That is, they are asymptotically normally distributed; they are asymptotically equivalent to $\tilde F_0(x)$ and $\tilde F_1(x) $ when $r=p$; and they become more efficient when $r>p$.

In the second half of this section we discuss the estimation of the quantiles of $F_i(x)$ for $i=0$ and 1. 
For any $\tau \in (0,1)$, we define the $\tau$th-quantile of $F_i$ as 
$\xi_{i,\tau} = \inf \{x:F_i(x) \geq \tau\}$ and its MELE as 
\begin{equation}
\label{mele.hatXi}
    \hat{\xi}_{i,\tau} = \inf \{x:\hat{F}_i(x) \geq \tau\}.
\end{equation}
Similarly, the estimator of $\xi_{i,\tau}$ based on $\tilde{F}_i(x)$ is defined as 
\begin{equation}
\label{mele.tildeXi}
    \tilde{\xi}_{i,\tau} = \inf \{x:\tilde{F}_i(x) \geq \tau\}.
\end{equation}
See \cite{zhang2000quantile} and \cite{chen2013quantile} for 
the asymptotic properties of  $    \tilde{\xi}_{i,\tau}$.
We refer to $\hat{\xi}_{i,\tau}$ as the ``DRM-EE" quantile estimators and $\tilde{\xi}_{i,\tau}$ as the ``DRM" quantile estimators.

The Bahadur representation is a useful tool for studying the asymptotic properties of quantile estimators. 
In the following theorem, we show that the DRM-EE quantile estimators are Bahadur representable. 
Let $f_i(x)$ be the probability density function of $F_i(x)$ for $i=0$ and 1. 

\begin{theorem}
\label{theoremBahadur}
Assume that the conditions of Theorem \ref{theorem_eta} are satisfied. 
Further, for $i= 0,1$ and any $\tau \in (0,1)$,  assume that $f_i(x)$ is continuous and positive at 
$x = \xi_{i,\tau}$. Then   $\hat{\xi}_{i,\tau}$ admits the Barhadur representation
\begin{equation*}
  \hat{\xi}_{i,\tau} = \xi_{i,\tau}+ \frac{\tau - \hat{F}_i(\xi_{i,\tau})}{f_i(\xi_{i,\tau})} + O_p\big(n^{-3/4}(\log n)^{1/2}\big).
\end{equation*}
\end{theorem}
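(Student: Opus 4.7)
\textbf{Proof plan for Theorem \ref{theoremBahadur}.}

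My plan is to follow the classical Bahadur--Kiefer template, adapted to the DRM--EE setting: first establish the $n^{-1/2}$-rate of $\hat{\xi}_{i,\tau}$, then turn the defining identity $\hat F_i(\hat\xi_{i,\tau})\approx \tau$ into a linear equation for $\hat\xi_{i,\tau}-\xi_{i,\tau}$ via a Taylor expansion of $F_i$, and finally bound the oscillation of the centred process $\hat F_i(\cdot)-F_i(\cdot)$ on a shrinking neighbourhood of $\xi_{i,\tau}$.

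For the first step, the definition \eqref{mele.hatXi} together with the fact that the individual jumps of $\hat F_i$ are $O_p(n^{-1})$ uniformly (since each $\hat p_{ij}=O_p(n^{-1})$) gives $\hat F_i(\hat\xi_{i,\tau})=\tau+O_p(n^{-1})$. Combining this with $\hat F_i(\xi_{i,\tau})-F_i(\xi_{i,\tau})=O_p(n^{-1/2})$ from Theorem \ref{theoremMELE.cdf} and the local bi-Lipschitz behaviour of $F_i$ at $\xi_{i,\tau}$ (guaranteed by $f_i(\xi_{i,\tau})>0$ and the continuity of $f_i$) forces $\hat\xi_{i,\tau}-\xi_{i,\tau}=O_p(n^{-1/2})$.

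For the second step, I would decompose
\begin{equation*}
\tau-\hat F_i(\xi_{i,\tau})
=\{\hat F_i(\hat\xi_{i,\tau})-\hat F_i(\xi_{i,\tau})\}+O_p(n^{-1})
=\{F_i(\hat\xi_{i,\tau})-F_i(\xi_{i,\tau})\}+R_n+O_p(n^{-1}),
\end{equation*}
where $R_n=\{\hat F_i(\hat\xi_{i,\tau})-F_i(\hat\xi_{i,\tau})\}-\{\hat F_i(\xi_{i,\tau})-F_i(\xi_{i,\tau})\}$. A Taylor expansion gives $F_i(\hat\xi_{i,\tau})-F_i(\xi_{i,\tau})=f_i(\xi_{i,\tau})(\hat\xi_{i,\tau}-\xi_{i,\tau})+O_p(n^{-1})$, so dividing through by $f_i(\xi_{i,\tau})$ yields the target representation up to the remainder $R_n/f_i(\xi_{i,\tau})$. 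The problem is thus reduced to establishing $R_n=O_p(n^{-3/4}(\log n)^{1/2})$.

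The main obstacle is the bound on $R_n$. Unlike a raw empirical CDF, $\hat F_i$ depends nonlinearly on the MELE $\hetab$ and the Lagrange multipliers $(\hat\lambda,\hat\bnu)$. I would import from the proof of Theorem \ref{theoremMELE.cdf} the asymptotic linearization
\begin{equation*}
\hat F_i(x)-F_i(x)=\frac{1}{n}\sum_{k=0}^{1}\sum_{j=1}^{n_k}\varphi_{i,x}(X_{kj})+o_p(n^{-1/2}),
\end{equation*}
valid uniformly for $x$ in a compact neighbourhood of $\xi_{i,\tau}$, where $\varphi_{i,x}$ splits as an indicator piece of the form $I(\,\cdot\leq x\,)$ weighted by bounded functions of the parameters plus a piece depending smoothly on $x$ through the influence functions of $(\hetab,\hat\lambda,\hat\bnu)$. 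The smooth piece, evaluated at $\hat\xi_{i,\tau}$ versus $\xi_{i,\tau}$, contributes $O_p(n^{-1/2}\cdot n^{-1/2})=O_p(n^{-1})$ to $R_n$. The indicator piece reduces $R_n$ to the modulus of continuity of a bounded-variation empirical process over an interval of length $O_p(n^{-1/2})$, which by standard maximal inequalities for VC classes (or the classical Kiefer-type bound) is $O_p(n^{-3/4}(\log n)^{1/2})$. Collecting these bounds delivers the claimed remainder and hence the Bahadur representation.
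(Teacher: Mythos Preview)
Your three-step outline (root-$n$ rate, then Taylor expansion of $F_i$, then an oscillation bound for $\hat F_i-F_i$) matches the paper's structure. The substantive gap is in Step~3, your control of $R_n$.

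You propose to import the linearization $\hat F_i(x)-F_i(x)=n^{-1}\sum_{k,j}\varphi_{i,x}(X_{kj})+o_p(n^{-1/2})$ from the proof of Theorem~\ref{theoremMELE.cdf} and then difference it at $\hat\xi_{i,\tau}$ and $\xi_{i,\tau}$. The trouble is the $o_p(n^{-1/2})$ remainder: a uniform $o_p(n^{-1/2})$ bound on $r_n(x)$ gives only $r_n(\hat\xi_{i,\tau})-r_n(\xi_{i,\tau})=o_p(n^{-1/2})$, which swamps the $O_p(n^{-3/4}(\log n)^{1/2})$ rate claimed in Theorem~\ref{theoremBahadur}. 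Your argument as written therefore yields the weaker Bahadur remainder $o_p(n^{-1/2})$---enough for Theorem~\ref{theoremQuantile}, but not the stated rate. To rescue your route you would have to go back into the Taylor expansion in $\bgamma$ and argue that the linearization remainder is actually $O_p(n^{-1})$ uniformly in $x$ (this requires controlling both the second-order term in $\hat\bgamma-\bgamma^*$ and the replacement of the empirical gradient $\partial F_i(x,\bgamma^*)/\partial\bgamma$ by its expectation $-\B_i(x)$), none of which is recorded in the proof of Theorem~\ref{theoremMELE.cdf}.

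The paper avoids this altogether. Instead of an influence-function linearization, it inserts the concrete intermediate
\[
\bar F_i(x)=\frac{1}{n}\Sumij \frac{\omega(X_{ij};\hat\btheta)^{\,i}\,I(X_{ij}\le x)}{1+\lambda^*[\omega(X_{ij};\hat\btheta)-1]},
\]
i.e.\ $\hat F_i$ with $(\hat\lambda,\hat\bnu)$ replaced by $(\lambda^*,{\bf 0})$ but $\hat\btheta$ retained. For the piece $\{\hat F_i-\bar F_i\}$ the oscillation over $|x-\xi_{i,\tau}|\le cn^{-1/2}$ is bounded directly: the relevant sums carry the indicator $I(\xi_{i,\tau}<X_{ij}\le x)$, and the weights are $O_p(n^{-1/2})$ (coming from $\hat\lambda-\lambda^*$ and $\hat\bnu$), so the whole thing is $O_p(n^{-1})$. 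For the piece $\{\bar F_i-F_i\}$ the paper invokes the proof of Lemma~A.2 in \cite{chen2013quantile}, which supplies the $O_p(n^{-3/4}(\log n)^{1/2})$ modulus. This decomposition is what replaces your ``indicator piece / smooth piece'' split, and it does so without ever needing a sharp bound on a linearization remainder.

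A smaller issue in Step~1: Theorem~\ref{theoremMELE.cdf} gives asymptotic normality of $\hat F_i(x)-F_i(x)$ at a \emph{fixed} $x$, not the uniform bound $\sup_x|\hat F_i(x)-F_i(x)|=O_p(n^{-1/2})$ that your ``forces $\hat\xi_{i,\tau}-\xi_{i,\tau}=O_p(n^{-1/2})$'' implicitly uses (bi-Lipschitz behaviour of $F_i$ alone does not transfer to $\hat F_i$). The paper proves this uniform bound separately, again via the $\bar F_i$ comparison together with the known uniform result for DRM-only CDF estimators.
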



The following theorem shows that the DRM-EE quantile estimators have asymptotic properties similar to those of the MELEs of $\etab$, $F_0(x)$, and $F_1(x)$. 
\begin{theorem}
\label{theoremQuantile}
Assume that the conditions in Theorem \ref{theoremBahadur} hold for $\xi_{l,\tau_l}$ and $\xi_{s,\tau_s}$. 
\begin{enumerate}
\item[(a)] 
As $n\to\infty$, 
\begin{equation*}
   \sqrt{n} \ba{c}\hat{\xi}_{l,\tau_l}-\xi_{l,\tau_l}\\\hat{\xi}_{s,\tau_s}-\xi_{s,\tau_s}\ea 
   \to N({\bf 0},  \bOmega_{ls}),
\end{equation*}
where
\begin{equation*}
    \bOmega_{ls}=  
    \ba{cc} \sigma_{ll}(\xi_{l,\tau_l},\xi_{s,\tau_s})/f_l^2(\xi_{l,\tau_l}) &\sigma_{ls}(\xi_{l,\tau_l},\xi_{s,\tau_s})/f_l(\xi_{l,\tau_l})f_s(\xi_{s,\tau_s})  \\ \sigma_{sl}(\xi_{s,\tau_s},x)/f_s(\xi_{s,\tau_s}) f_l(\xi_{l,\tau_l}) &\sigma_{ss}(\xi_{s,\tau_s},\xi_{s,\tau_s})/f_s^2(\xi_{s,\tau_s}) \ea.
\end{equation*}
\item[(b)]   If $r = p$, the asymptotic variance  matrix $\bOmega_{ls}$ of the DRM-EE quantile estimators is the same as that for the DRM quantile estimators;
    \item [(c)] if $r > p$, the asymptotic variance  matrix $\Omega_{ls}$ of the DRM-EE quantile estimators cannot decrease if an EE in \eqref{ee} is dropped. 
    \end{enumerate}
\end{theorem}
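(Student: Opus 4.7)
The plan is to reduce the quantile statement entirely to the corresponding CDF statement of Theorem \ref{theoremMELE.cdf} through the Bahadur representation of Theorem \ref{theoremBahadur}, and then transport the linear algebra facts in parts (b)--(c) of Theorem \ref{theoremMELE.cdf} through a fixed diagonal conjugation.

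First, I would apply Theorem \ref{theoremBahadur} simultaneously to the pair $(\hat\xi_{l,\tau_l},\hat\xi_{s,\tau_s})$. Since $\tau_i=F_i(\xi_{i,\tau_i})$, the representation reads
\[
\hat\xi_{i,\tau_i}-\xi_{i,\tau_i}
=-\,\frac{\hat F_i(\xi_{i,\tau_i})-F_i(\xi_{i,\tau_i})}{f_i(\xi_{i,\tau_i})}
+O_p\!\bigl(n^{-3/4}(\log n)^{1/2}\bigr),\qquad i\in\{l,s\}.
\]
Writing $D=-\,\mathrm{diag}\bigl(1/f_l(\xi_{l,\tau_l}),\,1/f_s(\xi_{s,\tau_s})\bigr)$, the leading term is the linear transformation $D$ applied to the vector $\bigl(\hat F_l(\xi_{l,\tau_l})-F_l(\xi_{l,\tau_l}),\,\hat F_s(\xi_{s,\tau_s})-F_s(\xi_{s,\tau_s})\bigr)^\top$. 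Because the remainder is $o_p(n^{-1/2})$, Slutsky's theorem reduces part (a) to the joint CDF CLT.

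Next I would invoke Theorem \ref{theoremMELE.cdf}(a) at the points $(x,y)=(\xi_{l,\tau_l},\xi_{s,\tau_s})$ to obtain
\[
\sqrt n\,\bigl(\hat F_l(\xi_{l,\tau_l})-F_l(\xi_{l,\tau_l}),\,\hat F_s(\xi_{s,\tau_s})-F_s(\xi_{s,\tau_s})\bigr)^\top\ \to\ N\bigl({\bf 0},\bSigma_{ls}(\xi_{l,\tau_l},\xi_{s,\tau_s})\bigr).
\]
Conjugating by the continuous linear map $D$ (whose non-degeneracy is guaranteed by the assumed positivity of $f_l$ and $f_s$ at the respective quantiles) yields the claimed limit $N({\bf 0},\bOmega_{ls})$ with $\bOmega_{ls}=D\,\bSigma_{ls}(\xi_{l,\tau_l},\xi_{s,\tau_s})\,D^\top$; matching entries against the displayed matrix in the statement, using $\sigma_{sl}(y,x)=\sigma_{ls}(x,y)^\top$, is a direct check.

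For parts (b) and (c), the argument is purely algebraic once one notes that the DRM quantile estimator $\tilde\xi_{i,\tau}$ admits the analogous Bahadur representation (as recorded in \cite{zhang2000quantile}, \cite{chen2013quantile}), so its asymptotic covariance arises from the same diagonal conjugation $D$ applied to the asymptotic covariance of $\sqrt n(\tilde F_l,\tilde F_s)$. When $r=p$, Theorem \ref{theoremMELE.cdf}(b) gives equality of the two CDF covariance matrices at $(\xi_{l,\tau_l},\xi_{s,\tau_s})$, and conjugation by the fixed matrix $D$ preserves equality, giving (b). When $r>p$, Theorem \ref{theoremMELE.cdf}(c) states that dropping an EE cannot decrease $\bSigma_{ls}$ in the positive semidefinite order; since $\bOmega_{ls}=D\,\bSigma_{ls}\,D^\top$ and $D$ does not depend on which EEs are retained, Loewner monotonicity is preserved under this congruence transformation, yielding (c).

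The main obstacle would normally be the Bahadur step itself---controlling $\hat F_i(\hat\xi_{i,\tau})-\hat F_i(\xi_{i,\tau})$ uniformly through the Lagrange multipliers $\hat\lambda$ and $\hat\bnu$---but that work is absorbed into Theorem \ref{theoremBahadur}. Granted that theorem, the only care needed here is verifying that the joint CDF CLT in Theorem \ref{theoremMELE.cdf}(a) applies at the (deterministic) quantile points $\xi_{l,\tau_l},\xi_{s,\tau_s}$, which holds because these points lie in the support of $F_0$ by the assumption $f_i(\xi_{i,\tau_i})>0$.
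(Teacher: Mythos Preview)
Your proposal is correct and matches the paper's own proof essentially line for line: the paper also derives (a) by combining the Bahadur representation of Theorem \ref{theoremBahadur} with the joint CDF CLT of Theorem \ref{theoremMELE.cdf}(a), and obtains (b) and (c) by writing $\bOmega_{ls}=D\,\bSigma_{ls}(\xi_{l,\tau_l},\xi_{s,\tau_s})\,D^\top$ with the fixed diagonal $D$ and then invoking Theorem \ref{theoremMELE.cdf}(b)--(c). The only cosmetic difference is that the paper writes $D$ with positive diagonal entries (the sign is irrelevant for the covariance), and it does not spell out the Slutsky step or the Loewner-order preservation under congruence as explicitly as you do.
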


Using the results of Theorems \ref{theoremMELE.cdf} and  \ref{theoremQuantile}, we may construct confidence regions and/or  test hypotheses on the CDFs at some fixed points and for quantiles through the Wald-type statistics. However, methods based on the Wald-type statistics require a consistent estimator of the corresponding asymptotic variance. 
%
It is more attractive to use the results in Corollary \ref{corol_valid} to construct the ELR-based confidence region 
for the CDFs at some fixed points and for quantiles.

Suppose we are interested in constructing a $(1-a)$-level CI  for a CDF at some fixed point $x_0$ for $i = 0$ or $1$. Denote the parameter of interest as $\zeta=F_i(x_0)$.
Let 
$$
g^*_1(x;\btheta,\zeta)= 
\left\{
\begin{array}{cl}
I(x\leq x_0)- \zeta,&i=0\\
 \exp\{\btheta^\top\Q(x)\} I(x\leq x_0)-\zeta,&i=1\\
\end{array}
\right..
$$
We further define $\ell_{n1}^*(\bpsi,\bbeta,\zeta)$ to be the profile empirical log-likelihood of $(\bpsi,\bbeta,\zeta)$
under Model \eqref{drm_def}
with the unbiased EEs in \eqref{ee} and $E_0\{g^*_1(X;\btheta,\zeta) \}$=0. 
Then the ELR function of $\zeta$ is defined as 
$$
R_{n1}(\zeta)=2\{\ell_n(\hat\bpsi,\hat\btheta)-\sup_{ \bpsi, \btheta}\ell_{n1}^*(\bpsi,\btheta,\zeta)\}.
$$

We can similarly define the ELR function for a quantile $\xi$  at the quantile level $\tau$ for $i=0$ or $1$, i.e., $\xi=\xi_{i,\tau}$. 
Let 
$$
g^*_2(x;\btheta,\xi)= 
\left\{
\begin{array}{cl}
I(x\leq \xi)- \tau,&i=0\\
 \exp\{\btheta^\top\Q(x)\} I(x\leq \xi)-\tau,&i=1\\
\end{array}
\right..
$$
We further  define $\ell_{n2}^*(\bpsi,\bbeta,\xi)$ to be the profile empirical log-likelihood of $(\bpsi,\bbeta,\xi)$
under Model \eqref{drm_def}
with the unbiased EEs in \eqref{ee} and $E_0\{g^*_2(X;\btheta,\xi)\}=0$. 
Then the ELR function of $\xi$ is defined as 
$$
R_{n2}(\xi)=2\{\ell_n(\hat\bpsi,\hat\btheta)-\sup_{ \bpsi, \btheta}\ell_{n2}^*(\bpsi,\btheta,\xi)\}.
$$

Using Corollary \ref{corol_valid}, we have the following results for $R_{n1}(\zeta^*)$ and 
$R_{n2}(\xi^*)$, where $\zeta^*$ and $\xi^*$ are the true values of $\zeta$ and $\xi$. 

\begin{corollary}
\label{corol_cdf.quantile}
 Under the conditions of Theorem \ref{theorem_eta}, as $n\to\infty$,
 both $R_{n1}(\zeta^*)$  and $R_{n2}(\xi^*)$  converge in distribution to $\chi^2_1$. 
\end{corollary}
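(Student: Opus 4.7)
The plan is to recognize both $R_{n1}(\zeta^*)$ and $R_{n2}(\xi^*)$ as instances of the validity-test statistic $W_n^*$ from Corollary \ref{corol_valid} with $m=1$. The key observation is that at the true values, the augmenting functions form legitimate unbiased EEs under $F_0$: indeed, $E_0\{g^*_1(X;\btheta^*,\zeta^*)\}=0$ by the very definition $\zeta^*=F_i(x_0)$ (for $i=1$ this uses the DRM identity $dF_1(x)=\exp\{\btheta^{*\top}\Q(x)\}dF_0(x)$, so that $E_0[\exp\{\btheta^{*\top}\Q(X)\}I(X\leq x_0)]=F_1(x_0)$), and $E_0\{g^*_2(X;\btheta^*,\xi^*)\}=0$ by the defining relation $F_i(\xi^*)=\tau$ of the quantile.

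With this in hand, I would make the identification explicit. For $R_{n1}(\zeta^*)$, view the augmented vector $\bigl(\g(x;\bpsi,\btheta)^\top,\ g^*_1(x;\btheta,\zeta^*)\bigr)^\top$ of dimension $r+1$ as the full EE system and apply Corollary \ref{corol_valid} with its $\g_1$ equal to the original $\g$ (dim $r$) and its $\g_2$ equal to the scalar augmenting function $g^*_1$ (so $m=1$). The dimension condition of the corollary reduces to the original assumption $r\geq p$. Under this identification, $\ell^*_{n1}(\bpsi,\btheta,\zeta^*)$ is precisely the profile log-likelihood of the augmented system and $\ell_n(\hat\bpsi,\hat\btheta)$ is the sup of the profile log-likelihood using only $\g$, so $R_{n1}(\zeta^*)$ coincides exactly with $W_n^*$. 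Therefore $R_{n1}(\zeta^*)\to\chi^2_1$. The argument for $R_{n2}(\xi^*)$ is identical after substituting $g^*_2$ for $g^*_1$ and $\xi^*$ for $\zeta^*$.

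The only technical point that requires care is that the regularity conditions of Theorem \ref{theorem_eta}, on which Corollary \ref{corol_valid} rests, must be verified for the augmented EE system. Here it matters crucially that $\zeta^*$ and $\xi^*$ are held fixed inside the supremum rather than being optimized over: the augmenting EE then needs only be smooth in $(\bpsi,\btheta)$, which is automatic because $g^*_1$ and $g^*_2$ are either independent of those parameters (the $i=0$ case) or depend on them only through the smooth factor $\exp\{\btheta^\top\Q(x)\}$ (the $i=1$ case), whose moments under $F_0$ are already controlled by the DRM conditions invoked in Theorem \ref{theorem_eta}. The indicator factors $I(X\leq x_0)$ and $I(X\leq \xi^*)$ are bounded, so they introduce no new integrability issues, and nondegeneracy of the augmented information matrix follows from the fact that $g^*_1$ and $g^*_2$ contain the constants $\zeta^*$ and $\tau$ that are not replicated by any entry of $\g$. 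I expect this verification to be the main source of routine bookkeeping, with no new conceptual obstacles beyond those already handled in Corollary \ref{corol_valid}.
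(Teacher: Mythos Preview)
Your proposal is correct and follows exactly the approach the paper intends: the paper states just before Corollary \ref{corol_cdf.quantile} that the result is obtained ``using Corollary \ref{corol_valid},'' and your write-up simply makes that application explicit by identifying the augmented EE system, checking unbiasedness at the true values, and noting that the fixed $\zeta^*$ (or $\xi^*$) keeps the smoothness conditions in $(\bpsi,\btheta)$ intact. Nothing further is needed.
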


Corollary \ref{corol_cdf.quantile} enables us to construct the ELR-based CI for $\zeta$ and $\xi$. 
For example, the ELR-based CI for $\xi$ with level $1-a$ can be constructed  as 
$
\{\xi: R_{n2}(\xi)\leq \chi^2_{1,1-a} \}. 
$

\section{Simulation Studies}
\label{gee_sim}
We conducted simulation studies to investigate three aspects of the proposed semiparametric inference procedures: 
\begin{enumerate}
\item[(1)] The performance of the inference procedures for $\bpsi$; 
\item[(2)] The power of the ELR test for the validity and usefulness of the auxiliary information;
\item[(3)] The performance of the inference procedures for the population quantiles. 
\end{enumerate}
We consider four combinations of sample sizes $(n_0,n_1)$:  $(50, 50)$, $(50, 150)$, $(100, 100)$, and $(200, 200)$.
For each simulation setting, the number of simulation runs is 2,000. 

\subsection{Simulation studies for inferences on $\bpsi$}
\label{gee_sim1}

\subsubsection{Simulation setup}
We start by exploring the first  aspect of the proposed semiparametric inference procedures. 
In the simulations, $F_0$ and $F_1$ are the CDFs of  ${\rm LN}(0,1)$ and ${\rm LN}(0.5,1)$, respectively, where ${\rm LN}(a, b)$ denotes the lognormal distribution with mean $a$ and variance $b$, both with respect to the log scale. 
It is easy to show that $F_0$ and $F_1$ satisfy the DRM in \eqref{drm_def} with 
$\Q(x)=(1,\log x)^\top$.  
The  parameter of interest is the mean ratio $\delta = \mu_1/\mu_0$ which was discussed in Example \ref{example1}.

To examine the usefulness of auxiliary information, we construct another variable $Z$ using the following model:
\begin{equation}
\label{x and z}
   Z = 1 + 0.5X + \epsilon
~~{\rm and}~~\epsilon \sim N(0,1). 
\end{equation}
That is, given $X_{ij}$,  $Z_{ij}$ is generated from \eqref{x and z}, for $i = 0,1,j = 1,\cdots,n_i$.
Hence,  the two-sample data consist of $\T_{ij}=(X_{ij},Z_{ij})^\top$ for $i = 0,1,j = 1,\cdots,n_i$. 
We treat $\mu_{z0} = E(Z|D=0)$, the population mean of covariate $Z$ for the first group (i.e., the $D=0$ group), as the known auxiliary information. 
Let the CDFs of $\T$ given $D=0$ and $D=1$ be $\boldsymbol{F}_0$ and $\boldsymbol{F}_1$, respectively. It can be checked that  $\boldsymbol{F}_0$ and $\boldsymbol{F}_1$ satisfy the DRM with $\Q(x,z) = (1,\log x)^\top$. 

To explore the effect of misspecified estimating equations for the auxiliary information, we introduce a bias by using $\kappa \mu_{z0}$ instead of the true value $\mu_{z0}$ for $E(Z|D=0)$. We consider $\kappa = 0.90,0.95,1.00,1.05,1.10$. Note that $\kappa = 1.00$ corresponds to correctly specified auxiliary information. 
We incorporate the biased/unbiased auxiliary information into our problem by setting $\bpsi=\delta$ and
$\g(\t;\bpsi,\btheta) = \left(\delta x- x \exp\{\btheta^\top\Q(x)\},  z - \kappa \mu_{z0}\right)^\top$
in \eqref{ee}. 


\subsubsection{Performance of point estimators\label{gee_sim1.point}}
We compare three point estimators:
\begin{itemize}
    \item[(i)] EMP: $\bar\delta=\bar{\mu}_1/\bar\mu_0$, where $\bar{\mu}_i=n_i^{-1}\sum_{j =1}^{n_i}x_{ij}$ for $i=0$ and 1;
    \item[(ii)] DRM: $\tilde\delta=\tilde{\mu}_1/\tilde\mu_0$, where $\tilde\mu_i=\int x d \tilde F_i(x)$ for $i=0$ and 1;
    \item[(iii)] DRM-EE: $\hat\delta=\hat{\mu}_1/\hat\mu_0$, where $\hat\mu_i=\int x d \hat F_i(x)$ for $i=0$ and 1.
\end{itemize}
Note that the asymptotic properties of $\tilde \delta$ and $\hat\delta$ are covered in Theorem \ref{theorem_eta}. 
The performance of each estimator is evaluated by the relative bias (RB) and the mean squared error (MSE). Simulation results on the three point estimators are presented in Table \ref{tab_point}.

\begin{table}[htbp]
  \centering
  \footnotesize
  \caption{RB (\%) and MSE ($\times 100$) of  three point estimators of the mean ratio}
  \label{tab_point}
    \begin{tabular}{ccccccccc}
    \toprule
          \multicolumn{1}{c}{\multirow{2}[2]{*}{$(n_0,n_1)$}}&       & \multicolumn{1}{c}{\multirow{2}[2]{*}{EMP}} & \multicolumn{1}{c}{\multirow{2}[2]{*}{DRM}} & \multicolumn{5}{c}{DRM-EE} \\
          &       &       &       & \multicolumn{1}{c}{$\kappa =1$} & \multicolumn{1}{c}{$\kappa=0.9$} & \multicolumn{1}{c}{$\kappa=0.95$} & \multicolumn{1}{c}{$\kappa=1.05$} & \multicolumn{1}{c}{$\kappa=1.1$} \\
    \midrule
    \multirow{2}[1]{*}{$(50,50)$} & RB    & 3.37  & 1.46  & 1.15  & 12.73 & 6.83  & -4.24 & -9.32 \\
          & MSE   & 20.03 & 12.50 & 9.61  & 16.59 & 12.07 & 9.00  & 9.96 \\ [1mm]
    \multirow{2}[0]{*}{$(50,150)$} & RB    & 3.70  & 1.75  & 0.89  & 16.61 & 8.50  & -6.11 & -12.41 \\
          & MSE   & 12.91 & 8.07  & 4.67  & 13.94 & 7.46  & 4.92  & 7.50 \\[1mm]
    \multirow{2}[0]{*}{$(100,100)$} & RB    & 1.86  & 1.21  & 0.89  & 12.32 & 6.48  & -4.35 & -9.20 \\
          & MSE   & 9.35  & 6.17  & 5.08  & 10.46 & 6.78  & 5.11  & 6.56 \\[1mm]
    \multirow{2}[1]{*}{$(200,200)$} & RB    & 0.90  & 0.46  & 0.53  & 11.87 & 6.06  & -4.62 & -9.27 \\
          & MSE   & 4.88  & 3.15  & 2.56  & 7.03  & 3.84  & 2.92  & 4.60 \\
    \bottomrule
    \end{tabular}%
\end{table}%

We first compare the results reported in the third to fifth columns, i.e., EMP, DRM, and DRM-EE with correctly specified auxiliary information (DRM-EE with $\kappa=1$). 
We see that the EMP estimator has the largest RBs and MSEs in all cases. 
The estimator of DRM-EE with $\kappa=1$ has the best performance, 
followed by the DRM estimator.  
This suggests that using correctly specified auxiliary information improves the estimation efficiency, 
which  agrees with Corollary \ref{corol_1} in Section \ref{gee_result}. 
We also note that as the sample size increases, all three estimators have improved performance and the gaps between the three estimators become less pronounced, especially between DRM and DRM-EE.

The sensitivity of the DRM-EE estimator with respect to misspecified auxiliary information 
can be observed from the last four columns of Table~\ref{tab_point}. 
The DRM-EE estimator for $\kappa\neq 1$ are clearly not as good as the estimator for $\kappa=1$.   
The absolute value of the RB increases as $\kappa$ moves further away from 1. 

\subsubsection{Performance of confidence intervals\label{gee_sim1.ci}}

We compare four   CIs for $\delta$:
\begin{itemize}
    \item[(i)] EMP-NA: Wald-type CI for $\delta$ based on the asymptotic normality of  $\log\bar\delta$;
    \item[(ii)] EMP-EL: \cite{owen2001empirical}'s ELR-based CI for $\delta$; 
    
    \item[(iii)] DRM: the ELR-based CI for $\delta$ in \eqref{lr_CI} without auxiliary information;
    \item[(iv)] DRM-EE: the ELR-based CI for $\delta$  in \eqref{lr_CI} with auxiliary information.
\end{itemize}
The performance of a CI is  evaluated in terms of coverage probability (CP) and average length (AL). The simulation results for the four CIs at the  95\% nominal level  are shown in Table \ref{tab_CI}.

\begin{table}[!htbp]
  \centering
  \footnotesize
  \caption{CP (\%) and AL of four CIs for the mean ratio at 95\%  nominal level}
    \label{tab_CI}%
    \begin{tabular}{cccccccccc}
    \toprule
         \multicolumn{1}{c}{\multirow{2}[2]{*}{$(n_0,n_1)$}} &       & \multicolumn{1}{c}{\multirow{2}[2]{*}{EMP-NA}} & \multicolumn{1}{c}{\multirow{2}[2]{*}{EMP-EL}} & \multicolumn{1}{c}{\multirow{2}[2]{*}{DRM}} & \multicolumn{5}{c}{DRM-EE} \\
          &       &       &       &       & \multicolumn{1}{c}{$\kappa =1$} & \multicolumn{1}{c}{$\kappa=0.9$} & \multicolumn{1}{c}{$\kappa=0.95$} & \multicolumn{1}{c}{$\kappa=1.05$} & \multicolumn{1}{c}{$\kappa=1.1$} \\
    \midrule
    \multirow{2}[1]{*}{$(50,50)$} & CP    & 92.6  & 91.6  & 94.5  & 94.2  & 90.7  & 93.9  & 92.1  & 88.1 \\
          & AL    & 1.65  & 1.65  & 1.41  & 1.23  & 1.38  & 1.30  & 1.16  & 1.10 \\[1mm]
    \multirow{2}[0]{*}{$(50,150)$} & CP    & 92.9  & 92.2  & 95.6  & 94.3  & 78.1  & 91.4  & 88.5  & 75.9 \\
          & AL    & 1.33  & 1.31  & 1.15  & 0.84  & 1.00  & 0.92  & 0.77  & 0.71 \\[1mm]
    \multirow{2}[0]{*}{$(100,100)$} & CP    & 94.9  & 93.9  & 95.3  & 94.3  & 85.6  & 92.5  & 92.0  & 85.1 \\
          & AL    & 1.18  & 1.20  & 1.00  & 0.88  & 0.98  & 0.93  & 0.84  & 0.80 \\[1mm]
    \multirow{2}[1]{*}{$(200,200)$} & CP    & 93.8  & 93.3  & 94.6  & 94.7  & 75.3  & 89.0  & 90.4  & 78.4 \\
          & AL    & 0.84  & 0.86  & 0.70  & 0.62  & 0.69  & 0.66  & 0.60  & 0.58 \\
    \bottomrule
    \end{tabular}
\end{table}%

As we can see in the third to sixth columns, EMP-NA and EMP-EL are comparable but are clearly inferior to DRM and DRM-EE ($\kappa=1$) in terms of CP and AL.  The CPs of  the CIs for DRM and DRM-EE with $\kappa=1$ are close to the nominal level for all sample size combinations.
This suggests that the limiting distributions provide accurate approximations to the finite-sample distributions of the ELR statistics. 
The ALs of the CIs for DRM-EE with $\kappa=1$ are always shorter than other CIs, a strong evidence that using correctly specified auxiliary information improves the performance of a CI. 
On the other hand, misspecified auxiliary information results in inaccurate CIs. 
As $\kappa$ moves further away from $1$, the CP of the ELR-based CI shifts away from the nominal value. 

\subsubsection{Power of the validity test}
In this section, we explore the second aspect of the proposed semiparametric inference procedures on the power of the ELR test for  the validity of the auxiliary information. The null hypothesis for the ELR test is 
$H_0: E_0(z - \kappa \mu_{z0})=0.$
According to Theorem \ref{theorem_valid} and Corollary \ref{corol_valid}, 
the ELR statistic has a $\chi^2_1$ limiting distribution  under the null hypothesis.
We consider misspecified auxiliary information with  $\kappa = 0.90,0.95,1.05,1.10$ as the alternatives. 
Table~\ref{tab_power} gives the simulated power ($\kappa \neq 1$) and type I error rate ($\kappa = 1$) of the ELR test at the 5\% significance level.


 \begin{table}[!htbp]
  \centering
  \footnotesize
  \caption{Power and  type I error rate of the ELR test  (\%)  at 5\% significance level}
  \label{tab_power}
    \begin{tabular}{cccccc}
    \toprule
    \multicolumn{1}{c}{$(n_0,n_1)$} &  \multicolumn{1}{c}{$\kappa=0.9$} & \multicolumn{1}{c}{$\kappa=0.95$} &
    \multicolumn{1}{c}{$\kappa =1$} &
    \multicolumn{1}{c}{$\kappa=1.05$} & \multicolumn{1}{c}{$\kappa=1.1$} \\
    \midrule
    $(50,50)$ & 21.43 & 8.76  & 5.36  & 9.41  & 20.48 \\
    $(50,150)$ & 27.33 & 10.08 & 5.37  & 10.13 & 22.97 \\
    $(100,100)$ & 36.44 & 11.26 & 5.51  & 13.61 & 32.48 \\
    $(200,200)$ & 62.98 & 20.66 & 5.15  & 19.16 & 55.23 \\
    \bottomrule
    \end{tabular}%
  
\end{table}%

We observe from Table \ref{tab_power} that the type I error rates of the ELR tests are close to the 5\% nominal level in all cases, which suggests that the limiting distribution for the ELR test works very well. 
As $\kappa$ deviates from 1 and  the sample size increases, the power of the test increases, as expected. 

\subsection{Simulation studies for inference on quantiles}
\label{gee_sim2}
\subsubsection{Simulation setup}

The third aspect of the proposed semiparametric inference procedures is inference on population quantiles with auxiliary information. 
In the simulations, we consider two distributional settings: 
\begin{itemize}
    \item [(1)] $f_0 \sim N(18,4)$ and $f_1 \sim N(18,9)$;
    \item [(2)] $f_0 \sim Gam (6,1.5)$ and $f_1 \sim Gam (8,1.125)$.
\end{itemize}
Here $N(a,b)$ denotes the normal distribution with mean $a$ and variance $b$ and $Gam(a,b)$ is the gamma distribution with shape parameter $a$ and scale parameter $b$. 
We are interested in estimating and constructing CIs for the  quantiles of $F_0$ and $F_1$ at the levels $\tau = 0.10,0.25,0.5,0.75,0.90$. 

\subsubsection{Performance of quantile estimators}
We compare four quantile estimators:
\begin{itemize}
    \item[(i)]  EMP: the  quantile estimator based on the empirical CDFs;
    \item[(ii)]  EL:    the quantile estimator based on the MELEs of the CDFs in \cite{tsao2006empirical}, in which a common mean is assumed;
    \item[(iii)]  DRM: the DRM  based quantile estimator in \eqref{mele.tildeXi};
    \item[(iv)]  DRM-EE: our proposed quantile estimator in \eqref{mele.hatXi} with the common-mean assumption or the EE \eqref{example3.ee} in Example \ref{example3}.
\end{itemize}

The DRM and DRM-EE methods are calculated with the correctly specified $\q(x)$, where $\q(x) = (x,x^2)^\top$ for the normal distributional setting and $\q(x) = (x,\log x)^\top$ for the gamma distributional setting.
The performance of an estimator is evaluated by the RB and MSE. 
The general patterns of the simulation results for the four methods are similar in the two settings. 
Hence, Table~\ref{tab_quant_point_norm} presented here is only for the normal setting; the results under gamma distributions are included in Section~3 of the Supplementary Material.

\begin{table}[!htbp]
  \centering
  \footnotesize
  \caption{RB (\%) and MSE ($\times 100$) for quantile estimators (normal distributions)}
  \label{tab_quant_point_norm}
    \begin{tabular}{ccccccccccc}
    \toprule
    \multirow{2}[2]{*}{$(n_0,n_1)$} & \multirow{2}[2]{*}{$\tau$} &       & \multicolumn{4}{c}{$N(18,4)$}   & \multicolumn{4}{c}{$N(18,9)$} \\
          &       &       & EMP    & EL    & DRM   & DRM-EE & EMP    & EL    & DRM   & DRM-EE \\
    \midrule
    \multirow{10}[10]{*}{$(50,50)$} & \multirow{2}[2]{*}{0.10} & RB    & -0.58 & 0.08  & 0.25  & 0.19  & -1.07 & -0.10 & 0.17  & -0.07 \\
          &       & MSE   & 23.87 & 19.88 & 18.85 & 16.32 & 59.74 & 44.17 & 46.26 & 37.35 \\[1mm]
           & \multirow{2}[2]{*}{0.25} & RB    & 0.04  & 0.02  & 0.15  & 0.14  & 0.01  & -0.06 & -0.14 & -0.25 \\
          &       & MSE   & 14.73 & 12.25 & 12.23 & 9.57  & 33.32 & 22.42 & 29.22 & 18.11 \\[1mm]
           & \multirow{2}[2]{*}{0.50} & RB    & -0.21 & 0.03  & 0.04  & 0.03  & -0.43 & 0.03  & 0.00  & 0.03 \\
          &       & MSE   & 12.47 & 9.93  & 10.06 & 7.76  & 29.21 & 16.25 & 25.08 & 11.10 \\[1mm]
           & \multirow{2}[2]{*}{0.75} & RB    & -0.01 & -0.01 & -0.08 & -0.07 & -0.05 & 0.02  & 0.03  & 0.14 \\
          &       & MSE   & 13.92 & 11.81 & 11.97 & 9.64  & 34.86 & 21.55 & 29.68 & 16.95 \\[1mm]
           & \multirow{2}[2]{*}{0.90} & RB    & -0.62 & -0.08 & -0.21 & -0.18 & -0.87 & 0.08  & -0.08 & 0.10 \\
          &       & MSE   & 23.36 & 21.36 & 19.51 & 17.66 & 53.89 & 43.03 & 46.50 & 37.61 \\
    \midrule
    \multirow{10}[10]{*}{$(50,150)$} & \multirow{2}[2]{*}{0.10} & RB    & -0.60 & 0.01  & 0.26  & 0.17  & -0.28 & 0.09  & 0.17  & 0.13 \\
          &       & MSE   & 23.91 & 18.16 & 16.36 & 11.49 & 17.62 & 14.72 & 16.05 & 13.34 \\[1mm]
           & \multirow{2}[2]{*}{0.25} & RB    & 0.04  & 0.02  & 0.14  & 0.12  & 0.06  & 0.03  & -0.01 & -0.03 \\
          &       & MSE   & 14.81 & 10.08 & 11.22 & 6.64  & 11.00 & 8.67  & 10.20 & 7.88 \\[1mm]
           & \multirow{2}[2]{*}{0.50} & RB    & -0.21 & 0.07  & 0.04  & 0.04  & -0.10 & 0.05  & 0.05  & 0.06 \\
          &       & MSE   & 12.39 & 7.69  & 9.09  & 4.59  & 8.97  & 6.92  & 8.15  & 5.84 \\[1mm]
           & \multirow{2}[2]{*}{0.75} & RB    & -0.01 & 0.02  & -0.10 & -0.05 & -0.06 & -0.04 & -0.01 & 0.00 \\
          &       & MSE   & 13.90 & 10.24 & 10.87 & 6.49  & 10.49 & 8.18  & 9.89  & 7.71 \\[1mm]
           & \multirow{2}[2]{*}{0.90} & RB    & -0.61 & -0.03 & -0.20 & -0.12 & -0.30 & -0.02 & -0.04 & -0.02 \\
          &       & MSE   & 23.32 & 19.87 & 17.26 & 12.94 & 17.04 & 14.93 & 16.25 & 14.40 \\
    \midrule
    \multirow{10}[10]{*}{$(100,100)$} & \multirow{2}[2]{*}{0.10} & RB    & -0.35 & 0.03  & 0.10  & 0.09  & -0.34 & 0.15  & 0.23  & 0.11 \\
          &       & MSE   & 11.82 & 10.05 & 9.13  & 7.86  & 25.71 & 19.44 & 22.01 & 16.62 \\[1mm]
           & \multirow{2}[2]{*}{0.25} & RB    & -0.17 & 0.03  & 0.04  & 0.04  & -0.18 & 0.02  & 0.03  & -0.06 \\
          &       & MSE   & 7.42  & 6.20  & 6.33  & 5.04  & 15.56 & 9.84  & 13.54 & 8.01 \\[1mm]
           & \multirow{2}[2]{*}{0.50} & RB    & -0.11 & 0.03  & 0.01  & 0.03  & -0.15 & 0.03  & 0.07  & 0.05 \\
          &       & MSE   & 6.07  & 4.81  & 5.21  & 3.88  & 13.53 & 7.87  & 11.53 & 5.41 \\[1mm]
           & \multirow{2}[2]{*}{0.75} & RB    & -0.17 & 0.01  & -0.05 & -0.02 & -0.30 & -0.05 & 0.01  & 0.02 \\
          &       & MSE   & 7.37  & 6.20  & 6.10  & 5.02  & 15.95 & 9.94  & 13.60 & 7.94 \\[1mm]
           & \multirow{2}[2]{*}{0.90} & RB    & -0.35 & -0.02 & -0.11 & -0.08 & -0.45 & -0.05 & -0.05 & -0.02 \\
          &       & MSE   & 11.82 & 10.83 & 9.40  & 8.24  & 25.37 & 19.69 & 22.77 & 17.23 \\
    \midrule
    \multirow{10}[10]{*}{$(200,200)$} & \multirow{2}[2]{*}{0.10} & RB    & -0.12 & 0.04  & 0.13  & 0.10  & -0.29 & -0.05 & 0.01  & -0.02 \\
          &       & MSE   & 5.77  & 5.01  & 4.50  & 3.91  & 13.65 & 10.89 & 11.81 & 8.94 \\[1mm]
            & \multirow{2}[2]{*}{0.25} & RB    & -0.06 & 0.02  & 0.05  & 0.04  & -0.12 & 0.02  & -0.04 & -0.04 \\
          &       & MSE   & 3.58  & 3.00  & 3.03  & 2.41  & 8.37  & 5.04  & 7.30  & 4.18 \\[1mm]
            & \multirow{2}[2]{*}{0.50} & RB    & -0.04 & 0.03  & 0.02  & 0.01  & -0.15 & -0.03 & -0.02 & 0.00 \\
          &       & MSE   & 3.02  & 2.40  & 2.57  & 1.99  & 7.07  & 3.99  & 6.04  & 2.80 \\[1mm]
            & \multirow{2}[2]{*}{0.75} & RB    & -0.10 & -0.03 & -0.03 & -0.04 & -0.16 & 0.00  & 0.00  & 0.03 \\
          &       & MSE   & 3.60  & 3.04  & 3.06  & 2.49  & 8.39  & 5.05  & 7.26  & 4.03 \\[1mm]
            & \multirow{2}[2]{*}{0.90} & RB    & -0.18 & -0.02 & -0.05 & -0.04 & -0.18 & 0.06  & 0.01  & 0.06 \\
          &       & MSE   & 5.90  & 5.24  & 4.68  & 4.10  & 12.78 & 10.16 & 11.75 & 8.75 \\
    \bottomrule
    \end{tabular}%
\end{table}%

Table  \ref{tab_quant_point_norm} shows that the RBs are negligibly small for all methods under all scenarios. The EMP estimator has the largest MSEs. The DRM-EE quantile estimators have the smallest MSEs due to its use of additional information, and   
the results agree with Theorem \ref{theoremQuantile}. 
We also notice that the  EL and DRM quantile estimators are comparable.

\subsubsection{Performance of confidence intervals}
We compare three CIs:
\begin{itemize}
    \item[(i)]  EMP: \cite{owen2001empirical}'s ELR-based CI for quantiles;
    \item[(ii)]  DRM: the ELR-based CI under the DRM without the common-mean assumption \citep{zhang2020empirical};
    \item[(iii)]  DRM-EE:  the proposed  ELR-based CI.
\end{itemize}
The construction of CIs for the quantiles under the two-sample EL method with the common-mean assumption has not been discussed in the literature, and hence is not included in the simulation. 
The CP and AL are used to compare CIs. 
We present the simulation results for the normal case in Table~\ref{tab_quant_ci_norm}.
The results for the gamma distributions display similar patterns and are included in Section~3 of the Supplementary Material.

\begin{table}[!htbp]
  \centering
  \footnotesize
  \caption{CP (\%) and AL for 95\% CIs of $100\tau\%$-quantiles (normal distributions)}
  \label{tab_quant_ci_norm}
    \begin{tabular}{ccccccccc}
    \toprule
    \multirow{2}[2]{*}{$(n_0,n_1)$} & \multirow{2}[2]{*}{$\tau$} &       & \multicolumn{3}{c}{$N(18,4)$} & \multicolumn{3}{c}{$N(18,9)$} \\
          &       &       & EMP   & DRM   & DRM-EE & EMP   & DRM   & DRM-EE \\
    \midrule
    \multirow{10}[2]{*}{(50,50)} & \multirow{2}[1]{*}{0.10} & CP    & 94.5  & 94.3  & 94.2  & 94.4  & 94.5  & 94.8 \\
          &       & AL    & 1.96  & 1.74  & 1.61  & 2.94  & 2.89  & 2.48 \\[1mm]
          & \multirow{2}[0]{*}{0.25} & CP    & 95.9  & 95.1  & 95.2  & 95.0  & 94.8  & 94.2 \\
          &       & AL    & 1.60  & 1.40  & 1.25  & 2.36  & 2.18  & 1.64 \\[1mm]
          & \multirow{2}[0]{*}{0.50} & CP    & 94.3  & 94.6  & 95.4  & 93.8  & 94.8  & 95.4 \\
          &       & AL    & 1.32  & 1.28  & 1.11  & 1.98  & 1.98  & 1.36 \\[1mm]
          & \multirow{2}[0]{*}{0.75} & CP    & 95.2  & 94.3  & 94.8  & 95.2  & 94.8  & 95.1 \\
          &       & AL    & 1.59  & 1.39  & 1.24  & 2.36  & 2.16  & 1.63 \\[1mm]
          & \multirow{2}[1]{*}{0.90} & CP    & 94.2  & 94.5  & 93.9  & 94.3  & 95.0  & 94.9 \\
          &       & AL    & 1.97  & 1.74  & 1.62  & 2.97  & 2.92  & 2.50 \\
    \midrule
    \multirow{10}[2]{*}{(50,150)} & \multirow{2}[1]{*}{0.10} & CP    & 94.5  & 94.3  & 95.0  & 93.7  & 94.7  & 94.7 \\
          &       & AL    & 1.96  & 1.62  & 1.38  & 1.63  & 1.63  & 1.49 \\[1mm]
          & \multirow{2}[0]{*}{0.25} & CP    & 95.9  & 95.1  & 95.2  & 95.8  & 95.4  & 95.3 \\
          &       & AL    & 1.60  & 1.33  & 1.02  & 1.34  & 1.28  & 1.11 \\[1mm]
          & \multirow{2}[0]{*}{0.50} & CP    & 94.3  & 95.1  & 95.5  & 94.4  & 96.0  & 96.0 \\
          &       & AL    & 1.32  & 1.20  & 0.86  & 1.16  & 1.16  & 0.97 \\[1mm]
          & \multirow{2}[0]{*}{0.75} & CP    & 95.2  & 94.5  & 94.8  & 95.3  & 95.8  & 96.1 \\
          &       & AL    & 1.59  & 1.31  & 1.00  & 1.32  & 1.27  & 1.10 \\[1mm]
          & \multirow{2}[1]{*}{0.90} & CP    & 94.2  & 94.8  & 94.2  & 95.2  & 94.6  & 94.3 \\
          &       & AL    & 1.97  & 1.62  & 1.39  & 1.65  & 1.63  & 1.50 \\
    \midrule
    \multirow{10}[2]{*}{(100,100)} & \multirow{2}[1]{*}{0.10} & CP    & 95.6  & 95.0  & 95.2  & 95.9  & 94.3  & 95.2 \\
          &       & AL    & 1.42  & 1.20  & 1.12  & 2.12  & 1.95  & 1.67 \\[1mm]
          & \multirow{2}[0]{*}{0.25} & CP    & 95.7  & 94.4  & 95.4  & 94.9  & 95.3  & 95.1 \\
          &       & AL    & 1.10  & 1.00  & 0.89  & 1.66  & 1.51  & 1.14 \\[1mm]
          & \multirow{2}[0]{*}{0.50} & CP    & 94.8  & 94.7  & 95.2  & 95.2  & 96.1  & 95.5 \\
          &       & AL    & 0.96  & 0.90  & 0.79  & 1.45  & 1.38  & 0.94 \\[1mm]
          & \multirow{2}[0]{*}{0.75} & CP    & 95.2  & 94.7  & 95.5  & 95.3  & 95.8  & 95.5 \\
          &       & AL    & 1.09  & 0.98  & 0.87  & 1.62  & 1.51  & 1.14 \\
          & \multirow{2}[1]{*}{0.90} & CP    & 95.5  & 94.2  & 94.3  & 95.6  & 95.2  & 94.8 \\
          &       & AL    & 1.43  & 1.21  & 1.13  & 2.15  & 1.96  & 1.66 \\[1mm]
    \midrule
    \multirow{10}[2]{*}{(200,200)} & \multirow{2}[1]{*}{0.10} & CP    & 93.8  & 95.4  & 95.1  & 94.5  & 94.4  & 94.9 \\
          &       & AL    & 0.93  & 0.84  & 0.79  & 1.39  & 1.36  & 1.16 \\[1mm]
          & \multirow{2}[0]{*}{0.25} & CP    & 95.8  & 95.7  & 95.3  & 95.0  & 95.0  & 94.0 \\
          &       & AL    & 0.77  & 0.69  & 0.62  & 1.14  & 1.06  & 0.80 \\[1mm]
          & \multirow{2}[0]{*}{0.50} & CP    & 94.9  & 95.0  & 94.6  & 95.2  & 94.9  & 95.4 \\
          &       & AL    & 0.68  & 0.63  & 0.55  & 1.03  & 0.96  & 0.66 \\[1mm]
          & \multirow{2}[0]{*}{0.75} & CP    & 94.9  & 95.5  & 95.2  & 95.0  & 95.2  & 95.4 \\
          &       & AL    & 0.76  & 0.69  & 0.62  & 1.14  & 1.07  & 0.81 \\[1mm]
          & \multirow{2}[1]{*}{0.90} & CP    & 95.0  & 94.4  & 95.0  & 93.7  & 94.5  & 94.6 \\
          &       & AL    & 0.94  & 0.85  & 0.79  & 1.41  & 1.37  & 1.18 \\
    \bottomrule
    \end{tabular}%
\end{table}%

The CIs for all the methods have satisfactory performance in terms of CP. However,  the CIs using the DRM-EE method have the shortest AL. 
The results indicate that the limiting distribution of the ELR statistic in Corollary \ref{corol_cdf.quantile} works very well, and additional auxiliary information leads to shorter CIs. 

\section{Two real-data applications}
\label{gee_real}

The first dataset \citep{simpson1975bayesian} is from a randomized airborne pyrotechnic seeding experiment, which is designed to test whether seeding clouds with silver iodide increase rainfall. The measurements are the amount of rainfall (in acre-feet) from 52 isolated cumulus clouds, half of which were randomly chosen and massively injected with silver iodide smoke. The rest were untreated. 
We use $D=0$ to indicate untreated clouds and $D=1$ for seeded clouds. 
We estimate the mean ratio $\delta$ of the two populations and construct CIs for $\delta$. 

To use our proposed method to analyze the dataset, we need to choose an appropriate $\q(x)$ in the DRM \eqref{drm_def}. \cite{simpson1975bayesian} and \cite{krishnamoorthy2003inferences} argued that this dataset is highly skewed. This suggests that the two-sample data can be fitted by the DRM with $\q(x) = \log x$. The goodness-of-fit test of \cite{qin1997goodness} gives a $p$-value of 0.568, which indicates that the DRM with $\q(x) = \log x$ provides an adequate fit to the two-sample data. 
Since there is no auxiliary information available, we analyze the data using DRM
and the other methods discussed in Section~\ref{gee_sim1}.
For the point estimates, the EMP method gives $2.685$, while our proposed DRM based estimate is $2.369$. 
As we have demonstrated in Section \ref{gee_sim1.point},
DRM provides smaller MSEs and RBs than EMP, so we expect that the DRM estimate is more accurate.  
We consider the three CIs at the 95\% nominal level, EMP-NA, EMP-EL, and DRM. Table~\ref{cloud} presents the lower bound (LB), the upper bound (UB), and the length of the CIs. 
The EMP-NA CI is significantly longer than the others, and DRM provides the shortest CI.
This agrees with the simulation results in Section \ref{gee_sim1.ci}.  
The LBs of all three CIs are greater than 1, indicating that the seeded clouds slightly increase rainfall. 

\begin{table}[htbp]
  \centering
  \footnotesize
  \caption{Summary of 95\% CIs for $\delta$ (cloud data)}
  \label{cloud}
    \begin{tabular}{c ccc }
    \toprule
          &  \multicolumn{1}{l}{LB} & \multicolumn{1}{l}{UB} & \multicolumn{1}{l}{Length} \\
    EMP-NA    &   1.13  & 6.36  & 5.23 \\
    EMP-EL    &  1.41  & 5.24  & 3.83 \\
    DRM    & 1.21  & 4.89  & 3.68 \\
    \bottomrule
    \end{tabular}
\end{table}%

The second dataset   \citep{hawkins2002diagnostics} is from a  clinical study of cyclosporine measurements in blood samples of organ transplant recipients.
In total,  
56 assay pairs for cyclosporine are obtained by a standard approved method, high-performance liquid chromatography (HPLC),  and an alternative radio-immunoassay (RIA) method. 
We would like to investigate whether the RIA assay is essentially equivalent to the HPLC assay.
The results in \cite{hawkins2002diagnostics} and \cite{bebu2008comparing} indicate 
 that the measurements from the two methods can be modeled by lognormal distributions and 
 have a common mean. 
Since the quantiles are important characteristics of  the population, 
we consider inference on these quantities at  $\tau = 0,25,0.50,0.75$.

Our methods and theory are applicable to two independent samples, but 
in this dataset, two methods are used to measure the same blood sample, 
so the two measurements may be correlated. 
To demonstrate the value of auxiliary information, 
we randomly split the  56 blood samples into two equal groups. 
We use $D= 0$ to indicate the  HPLC method for the first group and $D= 1$ to indicate
the RIA method for the second group.
This gives two independent samples, shown in 
Table~\ref{indep_sample}.
We set $\q(x)$ in the DRM \eqref{drm_def} to $\q(x) = (\log x,\log^2 x)^\top$.
For this choice, the goodness-of-fit test of \cite{qin1997goodness} gives a $p$-value of $0.839$. 
An ELR test to check the validity of the common-mean assumption gives a $p$-value of $0.530$. 
This preliminary analysis indicates that the DRM with the common-mean assumption is reasonable.

\begin{table}[htbp]
\centering
   \footnotesize
  \caption{Measurements from HPLC and RIA methods in two  independent samples}
  \label{indep_sample}
    \begin{tabular}{ccccccc|ccccccc}
    \toprule
    \multicolumn{7}{c|}{HPLC ($D=0$) }                             & \multicolumn{7}{c}{RIA ($D=1$)} \\
    77    & 87    & 93    & 109   & 109   & 129   & 130   & 38    & 98    & 108   & 109   & 111   & 118   & 125 \\
    153   & 156   & 159   & 185   & 198   & 203   & 227   & 130   & 144   & 149   & 162   & 165   & 169   & 172 \\
    244   & 245   & 271   & 280   & 285   & 318   & 336   & 204   & 218   & 234   & 235   & 293   & 294   & 303 \\
    339   & 340   & 440   & 498   & 521   & 556   & 578   & 311   & 341   & 376   & 404   & 406   & 477   & 679 \\
    \bottomrule
    \end{tabular}%
\end{table}%

      
We use the methods of Section~\ref{gee_sim2} to analyze the independent samples. Table~\ref{cyc} summarizes the point estimates and 95\% CIs.
Note that the EL method does not specify how to construct CIs for quantiles with the common-mean assumption.  
We also provide the results of analyzing the original 56 pairs using the EMP method; these are recorded under ``EMP--ALL" in Table~\ref{cyc} and serve as the benchmarks.
Table \ref{cyc} shows that 
the DRM-EE CIs are always shorter than the DRM and EMP CIs. This is in line with our simulation results. 
Although each independent sample is half the size of the original sample, 
 the DRM-EE quantile estimates and CIs 
are similar to the EMP-ALL quantile estimates and CIs. 
This indicates that our method can combine information from two samples and effectively utilize available auxiliary information.

\begin{table}[htbp]
  \centering
\footnotesize
  \caption{Summary of point estimates and 95\% CIs for quantiles (cyclosporine data)}
\label{cyc}
    \begin{tabular}{cccccccccc}
    \toprule
    \multirow{2}[2]{*}{$\tau$} &       & \multicolumn{4}{c}{HPLC ($D=0$)}      & \multicolumn{4}{c}{RIA ($D=1$)} \\
          &       & Estimate & LB    & UB    & Length & Estimate & LB    & UB    & Length \\
    \midrule
    \multirow{5}[1]{*}{0.25} & EMP-ALL & 127   & 109   & 159   & 50    & 141   & 118   & 162   & 50 \\
          & EMP   & 130   & 93    & 198   & 105   & 125   & 108   & 165   & 105 \\
          & EL    & 130   & --    & --    & --    & 130   & --    & --    & -- \\
          & DRM   & 144   & 109   & 185   & 76    & 129   & 108   & 162   & 54 \\
          & DRM-EE & 130   & 109   & 165   & 56    & 130   & 109   & 162   & 53 \\
\midrule
    \multirow{5}[0]{*}{0.5} & EMP-ALL & 206   & 159   & 271   & 112   & 196   & 162   & 287   & 112 \\
          & EMP   & 227   & 156   & 318   & 162   & 172   & 144   & 294   & 162 \\
          & EL    & 227   & --    & --    & --    & 204   & --    & --    & -- \\
          & DRM   & 234   & 162   & 303   & 141   & 198   & 149   & 280   & 131 \\
          & DRM-EE & 218   & 162   & 280   & 118   & 204   & 162   & 280   & 118 \\
\midrule
    \multirow{5}[1]{*}{0.75} & EMP-ALL & 336   & 271   & 402   & 131   & 311   & 287   & 408   & 131 \\
          & EMP   & 336   & 240   & 432   & 192   & 303   & 218   & 388   & 192 \\
          & EL    & 336   & --    & --    & --    & 311   & --    & --    & -- \\
          & DRM   & 339   & 280   & 477   & 197   & 311   & 235   & 406   & 171 \\
          & DRM-EE & 318   & 280   & 404   & 124   & 336   & 280   & 406   & 126 \\
    \bottomrule
    \end{tabular}%
\end{table}%

\section{Discussion}
\label{gee_dis}
We have proposed new and general semiparametric inference procedures to utilize the combined information from two samples as well as auxiliary information formulated  through unbiased EEs. 
We have established the asymptotic normality of the MELEs of the unknown parameters in the DRMs and/or defined through EEs and  the chi-square limiting distributions for the ELR statistics on the parameters. 
We have also derived efficiency results for estimating these parameters and obtained
similar results for inference on the CDFs and population quantiles. 
We have developed an ELR test for checking the validity and usefulness of auxiliary information, and conducted 
simulation studies to evaluate the power of the test.
Our theoretical results and simulation studies demonstrated that the use of DRMs and auxiliary information leads to improved  
efficiency of statistical inferences.

We have focused on two-sample data under the DRM \eqref{drm_def} in the current paper.
This leads to many interesting potential research topics. 
First, we may generalize our results to multiple-sample DRMs \citep{chen2013quantile} with unbiased EEs. 
Second, we may study  other types of parameters, such as the ROC curve and the area under the curve. 
Third, in Example \ref{example2} (a retrospective case-control study with auxiliary information),
it is assumed that the ratio of the total sample size for the internal study to the total sample size for the external study goes to 0. This assumption ensures that the uncertainty of the regression coefficient from the external study 
is negligible.
If the sample sizes of the internal and external studies are comparable, then the variation of the regression coefficient cannot be ignored. Simply discarding the uncertainty may not guarantee efficiency with the auxiliary information \citep{zhang2020bio}. 
We may generalize \cite{zhang2020bio}'s method from the one-sample case to case-control studies
with uncertainty in the regression coefficient for the external study. 
We hope to address these problems in future research.

\section*{Appendix: Regularity conditions}

The asymptotic results in this paper depend on the following regularity conditions. 
We use $||\cdot||$ to denote the Euclidean norm, i.e., $||\cdot||^2$ is the sum of squares of the elements. 

\begin{itemize}
    \item[C1.] The total sample size $n = n_0 + n_1 \to \infty$ and $\lambda^* = n_1/n$ is a constant. 
    \item[C2.]  The two CDFs $F_0$ and $F_1$ satisfy the DRM \eqref{drm_def} with a true parameter value $\btheta^*$, 
     and  $\int \exp \{\btheta^\top \Q(x)\}dF_0(x) <\infty$ in a neighborhood of the true value $\btheta^*$.
    \item[C3.] $\int \Q(x)^\top \Q(x)  dF_0(x)$ exists and is positive definite.
    \item[C4.]  $ E_0\left\{  \g(X; \bpsi^*,\btheta^*) \right\}={\bf 0}$,  $ E_0\left\{  \partial \g(X; \bpsi^*,\btheta^*)/\partial \etab\right\}$ has rank $p$, and  $\int\G(X) \G(X)^\top dF_0(x)$ exists and is positive definite, where $\G(x)$ is defined before Theorem \ref{theorem_eta}.
    \item[C5.] $ \G(x;\etab) $ is twice differentiable with respect to $\etab$, 
    and $||  \G(x,\etab) ||^3$,  $|| \partial \G(x,\etab)/\partial \etab||^2$, and $||\partial \G(x,\etab)/\{\partial \etab\partial\etab^\tau\}|| $ are bounded by some integrable function $R(x)$ with respect to both $F_0$ and $F_1$ in the neighborhood of $\etab^*$.
\end{itemize}

Conditions C1--C3 ensure that the quadratic approximation of the dual likelihood $\ell_{nd}$ in \eqref{del} is applicable. Condition C2 guarantees the existence of finite moments of $\Q(x)$ in a neighborhood of $\btheta^*$. Condition C3 is an identifiability condition, and it ensures that the components of $\Q(x)$ are linearly independent under both $F_i$'s, and hence the elements of $\Q(x)$ except the first cannot be constant functions.  Conditions C3 and C4 together ensure that $\U$ and  $\V$ in Theorem \ref{theorem_eta} have full rank, guaranteeing that $\J$ is invertible. Conditions C1--C5 guarantee that quadratic approximations of the profile empirical log-likelihood $\ell_n(\bpsi,\btheta)$ are applicable.


\newpage

{\centering {\large {\bf Supplementary Material for \\ ``Semiparametric empirical likelihood inference with general estimating equations under density ratio models"}}}
\bigskip

\noindent
This document provides supplement materials to the paper entitled ``Semiparametric empirical likelihood inference with general estimating equations under density ratio models."
Section 1 contains more examples of important summary quantities. 
Section 2 presents more details of the extraction of the summary-level information from the external case-control study. 
Section 3 gives additional simulation results, and
Sections 4--11 provide technical details and proofs for the theoretical results presented in the main paper. 

\setcounter{section}{0}
\section{Examples of summary quantities}
In this section, we provide some examples to demonstrate that the estimating equations (EEs)
$E_0\{\g(X;\bpsi,\btheta)\}={\bf 0}$ can define many important summary quantities. 
Recall that the two cumulative distribution functions (CDFs) $F_0$ and $F_1$ are linked via the
density ratio model (DRM): 
\begin{equation}
\label{drm_def2}
  dF_1(x) = \exp\{\btheta^\top\Q(x)\}dF_0(x). 
\end{equation}

\begin{example}(Means and variances)
Let $\mu_i$ and $\sigma_i^2$ be the mean and variance of $F_i$ for $i=0,1$. 
Further, let $\bpsi=(\mu_0,\mu_1,\sigma_0^2,\sigma_1^2)^\top$
and 
$$
\g(x;\bpsi,\btheta)=\left(
\begin{array}{c}
x-\mu_0\\
x\exp\{\btheta^\top\Q(x)\}-\mu_1\\
x^2-\mu_0^2-\sigma_0^2\\
x^2\exp\{\btheta^\top\Q(x)\}-\mu_1^2-\sigma_1^2\\
\end{array}
\right).
$$
Then these means and variances can be defined through $E_0\{\g(X;\bpsi,\btheta)\}={\bf 0}$.
The general uncentered and centered moments can be defined similarly. 

Applying the results in Theorem 2 in the main paper, we can construct an empirical likelihood ratio (ELR) statistic for testing $H_0: \sigma_0^2=\sigma_1^2$, which to our best knowledge is new for such a testing problem. 
\end{example}

\begin{example} (Generalized entropy class of inequality measures)
Suppose the $X_{ij}$'s are positive random variables. 
Let
$$
GE_{i}^{(\xi)}
=\left\{
\begin{array}{ll}
\frac{1}{\xi^2-\xi}\left\{ \int_{0}^\infty \left( \frac{x}{\mu_i} \right )^\xi d F_i(x)-1 \right\},&\mbox{if }\xi\neq0,1,\\
-\int_{0}^\infty \log\left(\frac{x}{\mu_i}\right )dF_i(x),&\mbox{if }\xi=0,\\
\int_{0}^\infty \frac{x}{\mu_i} \log\left(\frac{x}{\mu_i}\right )dF_i(x) ,&\mbox{if }\xi=1\\
\end{array}
\right.
$$
be the generalized entropy class of inequality measures of the $i$th population, $i=0,1$.
We assume that $GE_{i}^{(\xi)}$ exists. 
In our setup, $(GE_0^{(\xi)},GE_1^{(\xi)})^\top$ together with $(\mu_0,\mu_1)$ can also be defined through the EEs. 
For illustration, we consider $\xi=1$. 

Let $\bpsi=(\mu_0,\mu_1,GE_0^{(1)},GE_0^{(1)})^\top$
and 
$$
\g(x;\bpsi,\btheta)=\left(
\begin{array}{c}
x-\mu_0\\
x\exp\{\btheta^\top\Q(x)\}-\mu_1\\
x \log(x/\mu_0) - \mu_0GE_0^{(1)}\\
x \log(x/\mu_1) \exp\{\btheta^\top\Q(x)\}-\mu_1GE_1^{(1)}\\
\end{array}
\right).
$$
Then $(GE_0^{(\xi)},GE_1^{(\xi)})^\top$ together with $(\mu_0,\mu_1)$ can be defined through $E_0\{\g(X;\bpsi,\btheta)\}={\bf 0}$.
For other values of $\xi$, we can define the corresponding EEs similarly. 

Applying the results in Theorem 2 in the main paper, we can also construct an ELR statistic for testing $H_0: GE_0^{(\xi)}=GE_1^{(\xi)}$. 
Again, to our best knowledge this ELR statistic is new for such testing problems. 
\end{example}

\begin{example} (Cumulative distribution functions)
Suppose we are interested in $\zeta_0=F_0(x_0)$ and $\zeta_1=F_1(x_1)$, 
where $x_0$ and $x_1$ are fixed points. 
Let $\bpsi=(\zeta_0,\zeta_1)^\top$
and 
$$
\g(x;\bpsi,\btheta)=\left(
\begin{array}{c}
I(x\leq x_0)-\zeta_0\\
\exp\{\btheta^\top\Q(x)\}I(x\leq x_1)-\zeta_1\\
\end{array}
\right).
$$
Then $(\zeta_0,\zeta_1)^\top$ can be defined through $E_0\{\g(X;\bpsi,\btheta)\}={\bf 0}$.

Applying the results in Theorem 2 in the main paper, we can also construct an ELR-based confidence interval (CI) for $\zeta_0$ or $\zeta_1$ or an ELR-based confidence region for $(\zeta_0,\zeta_1)^\top$. 
\end{example}

\begin{example} (Quantiles)
Suppose we are interested in $\xi_{0,\tau_0}=\inf \{x:F_0(x) \geq \tau_0\}$ and $\xi_{1,\tau_1}=\inf \{x:F_1(x) \geq \tau_1\}$, where $\tau_0,\tau_1\in(0,1)$. 
Let $\bpsi=(\zeta_0,\zeta_1)^\top$
and 
$$
\g(x;\bpsi,\btheta)=\left(
\begin{array}{c}
I(x\leq \xi_{0,\tau_0} )-\tau_0\\
\exp\{\btheta^\top\Q(x)\}I(x\leq \xi_{1,\tau_1})-\tau_1\\
\end{array}
\right).
$$
Then $(\xi_{0,\tau_0},\xi_{1,\tau_1})^\top$ can be defined through $E_0\{\g(X;\bpsi,\btheta)\}={\bf 0}$.

Applying the result of Corollary 2 or 3 in the main paper, we can also construct an ELR-based CI for $\xi_{0,\tau_0}$ or $\xi_{1,\tau_1}$ or an ELR-based confidence region for $(\xi_{0,\tau_0},\xi_{1,\tau_1})^\top$. 
\end{example}

\section{Summary-level information from external case-control studies}
Let $\{(Y_{i},D_i):i=1,\ldots,n_E\}$ be the data from an external study, 
where $D_i=0$ or 1 indicates that the individual is from a disease-free or diseased group.  
We model the relationship between $D$ and $Y$ through a logistic regression model, which may not be the true model:  
\begin{equation}
\label{external.logis}
h(Y;\alpha_Y, \bbeta_Y)= P(D=1|Y)=\frac{\exp(\alpha_Y+\bbeta_Y^\top Y)}{1+\exp(\alpha_Y+\bbeta_Y^\top Y)}. 
\end{equation}

Let 
$$
\a(\alpha_Y, \bbeta_Y)
=\frac{1}{n_E}\sum_{i=1}^{n_{E}}\{D_i-h(Y_i;\alpha_Y, \bbeta_Y) \} (1,Y^\top)^\top,
$$
which are the score functions based on the logistic regression model in \eqref{external.logis}. 
Further, let $(\alpha_Y^*,\bbeta_Y^*)$ be the solution to  $E\{ \a(\alpha_Y, \bbeta_Y)\}={\bf 0}$. 
That is,  
$$
E\{ \a(\alpha_Y^*, \bbeta_Y^*)\}={\bf 0}.
$$
Note that $(\alpha_Y^*, \bbeta_Y^*)$ may not be known exactly. 
We can solve the score equations $\a(\alpha_Y, \bbeta_Y)={\bf 0}$
to obtain the estimator $(\hat \alpha_Y, \hat\bbeta_Y)$. 
That is, $\a(\hat \alpha_Y, \hat\bbeta_Y)={\bf 0}$.
Assume that we have access to the estimator $(\hat \alpha_Y, \hat\bbeta_Y)$ but not necessarily to the individual-level data $\{(Y_{i},D_i):i=1,\ldots,n_E\}$. 

When the total sample size $n=n_0+n_1$ for the internal study 
satisfies $n/n_E\to 0$, we can use $(\hat \alpha_Y, \hat\bbeta_Y)$ for $(\alpha_Y^*, \bbeta_Y^*)$. 
This will cause a negligible error for inference for the internal study. 
In the following, we assume that $(\alpha_Y^*, \bbeta_Y^*)$ is known and we denote 
$
h(y)= h(y;\alpha_Y^*, \bbeta_Y^*). 
$

Next, we discuss how to summarize the information from $
E\{ \a(\alpha_Y^*, \bbeta_Y^*)\}={\bf 0}
$ 
into unbiased EEs with respect to $F_0$, which is the setup in the main paper. 
When the external study is a prospective case-control study, by defining the unknown overall disease prevalence $\pi = P(D =1)$, we have 
\begin{eqnarray}
E\{ \a(\alpha_Y^*, \bbeta_Y^*)\}
&=&E\left[\{D-h(Y) \} (1,Y^\top)^\top \right]\label{ee.pros1.jasa}\\
&=&E_0\Big(
[ -(1-\pi) h(Y) +\pi \exp\{\btheta^\top\Q(X)\}\{1-h(Y)\} ](1,Y^\top)^\top
\Big),  \label{ee.pros1.our}
\end{eqnarray}
where we have used the law of total expectation and the DRM  \eqref{drm_def2} in the last step. 

When the external study is a retrospective case-control study, we have 
\begin{eqnarray}
\label{ee.retro1.jasa}
E\{ \a(\alpha_Y^*, \bbeta_Y^*)\} 
&=&-(1-\pi_E) E_0 \{ h(Y)  (1,Y^\top)^\top\} +\pi_E E_1 [ \{ 1- h(Y)\}  (1,Y^\top)^\top\}],\end{eqnarray}
where $E_1$ represents the expectation operators with respect to $F_1$, and  $\pi_E$ is the proportion of diseased individuals in the external case-control study. Note that $\pi_E$ is a known and fixed value.

Using the DRM \eqref{drm_def2}, we further get 
\begin{eqnarray}
E\{ \a(\alpha_Y^*, \bbeta_Y^*)\} 
&=&E_0\Big(
[ -(1-\pi_E) h(Y) +\pi_E \exp\{\btheta^\top\Q(X)\}\{1-h(Y)\} ](1,Y^\top)^\top
\Big). \label{ee.retro1.our}
\end{eqnarray}

Summarizing \eqref{ee.pros1.our} and \eqref{ee.retro1.our}, we have that 
 if the external study is a prospective case-control study, then 
$E_0\{\g(X;\bpsi,\btheta)\}={\bf 0}$, 
where 
\begin{equation*} 
\g(x;\bpsi,\btheta) =[ -(1-\pi) h(y) +\pi \exp\{\btheta^\top\Q(x)\}\{1-h(y)\} ](1,y^\top)^\top
\end{equation*}
with $\bpsi=\pi$; 
if the external study is a retrospective case-control study, then $E_0\{\g(X;\btheta)\}={\bf 0}$, where 
\begin{equation*} 
\g(x;\btheta) =[ -(1-\pi_E) h(y) +\pi_E \exp\{\btheta^\top\Q(x)\}\{1-h(y)\} ](1,y^\top)^\top.
\end{equation*}

Similarly, we summarize the information from $
E\{ \a(\alpha_Y^*, \bbeta_Y^*)\}={\bf 0}
$ 
into unbiased EEs with respect to the joint distribution of $(D,Y)$, which is the setup in \cite{chatterjee2016constrained}.
Note that when the external study is a retrospective case-control study, \eqref{ee.retro1.jasa} can be equivalently written as 
\begin{eqnarray}
\label{ee.retro2.jasa}
E\{ \a(\alpha_Y^*, \bbeta_Y^*)\} 
=E \left[ \frac{1-\pi_E}{1-\pi} (1-D) \{D-h(Y)\}(1,Y^\top)^\top +  \frac{\pi_E}{\pi} D \{D-h(Y)\}  (1,Y^\top)^\top \right ].\end{eqnarray}
 
 Summarizing \eqref{ee.pros1.jasa} and \eqref{ee.retro2.jasa}, we have that if 
the external study is a prospective case-control study, then 
$E\{\bu(D, Y)\}={\bf 0}$, 
where 
\begin{equation*} 
\bu(D, Y) =\{D-h(Y) \} (1,Y^\top)^\top;
\end{equation*}
if the external study is a retrospective case-control study, then $E\{\bu(D, Y;\pi)\}={\bf 0}$, where
\begin{equation*} 
\bu(D, Y;\pi) =\frac{1-\pi_E}{1-\pi} (1-D) \{D-h(Y)\}(1,Y^\top)^\top +  \frac{\pi_E}{\pi} D \{D-h(Y)\}  (1,Y^\top)^\top.
\end{equation*}

Note that the method and theory in \cite{chatterjee2016constrained} are applicable when there is no unknown parameter in the functions $\bu(\cdot)$. 
Hence, their general results do not apply when the external study is a retrospective case-control study.

\section{Additional simulation under the gamma distributional setting}
Table \ref{tab_quant_point_gamma} presents the four quantile estimates under gamma distributions. 
Table \ref{tab_quant_ci_gamma}  presents the three CIs for quantiles under gamma distributions. 
The general summary statements are similar to those for normal distributions, and hence are omitted. 
\begin{table}[!htbp]
  \centering
  \footnotesize
  \caption{RB (\%) and MSE ($\times 100$) for quantile estimators (gamma distributions)}
  \label{tab_quant_point_gamma}
    \begin{tabular}{ccccccccccc}
    \toprule
    \multirow{2}[2]{*}{$(n_0,n_1)$} & \multirow{2}[2]{*}{$\tau$} &       & \multicolumn{4}{c}{$Gam(8,1.125)$} & \multicolumn{4}{c}{$Gam(6,1,5)$} \\
          &       &       & \multicolumn{1}{c}{EMP} & \multicolumn{1}{c}{EL} & \multicolumn{1}{c}{DRM} & \multicolumn{1}{c}{DRM-EE} & \multicolumn{1}{c}{EMP} & \multicolumn{1}{c}{EL} & \multicolumn{1}{c}{DRM} & \multicolumn{1}{c}{DRM-EE} \\
    \midrule
    \multirow{10}[2]{*}{$(50,50)$} & \multirow{2}[1]{*}{0.10} & RB    & -2.25 & -0.05 & 0.25  & 0.16  & -1.40 & 0.71  & 1.26  & 0.65 \\
          &       & MSE   & 29.71 & 25.04 & 23.26 & 20.29 & 31.70 & 26.96 & 26.66 & 22.88 \\[1mm]
          & \multirow{2}[0]{*}{0.25} & RB    & 0.01  & -0.04 & 0.08  & 0.03  & 0.75  & 0.30  & 0.47  & -0.06 \\
          &       & MSE   & 25.02 & 19.93 & 21.38 & 16.39 & 32.91 & 24.71 & 27.78 & 20.32 \\[1mm]
          & \multirow{2}[0]{*}{0.50} & RB    & -1.03 & -0.04 & -0.15 & -0.02 & -0.74 & -0.07 & 0.28  & -0.08 \\
          &       & MSE   & 30.99 & 23.20 & 25.91 & 17.32 & 40.46 & 25.74 & 35.52 & 19.68 \\[1mm]
          & \multirow{2}[0]{*}{0.75} & RB    & -0.13 & -0.02 & -0.33 & -0.13 & -0.02 & -0.20 & 0.15  & 0.12 \\
          &       & MSE   & 48.41 & 35.85 & 42.11 & 28.23 & 65.70 & 43.10 & 57.48 & 33.81 \\[1mm]
          & \multirow{2}[1]{*}{0.90} & RB    & -1.85 & 0.15  & -0.47 & -0.20 & -1.93 & 0.01  & 0.12  & 0.14 \\
          &       & MSE   & 99.19 & 86.91 & 83.12 & 62.28 & 133.79 & 110.01 & 120.28 & 86.79 \\
    \midrule
    \multirow{10}[2]{*}{$(50,150)$} & \multirow{2}[1]{*}{0.10} & RB    & -2.25 & 0.05  & 0.41  & 0.32  & -0.36 & 0.36  & 0.42  & 0.33 \\
          &       & MSE   & 29.98 & 23.32 & 20.31 & 15.18 & 10.40 & 9.74  & 9.86  & 9.10 \\[1mm]
          & \multirow{2}[0]{*}{0.25} & RB    & -0.02 & 0.01  & 0.03  & -0.02 & 0.19  & 0.09  & 0.12  & -0.03 \\
          &       & MSE   & 25.11 & 17.45 & 19.28 & 11.05 & 10.58 & 9.27  & 9.89  & 8.61 \\[1mm]
          & \multirow{2}[0]{*}{0.50} & RB    & -1.03 & 0.02  & -0.18 & -0.01 & -0.21 & 0.01  & 0.12  & -0.03 \\
          &       & MSE   & 31.26 & 17.31 & 22.92 & 9.55  & 14.17 & 11.46 & 12.98 & 10.15 \\[1mm]
          & \multirow{2}[0]{*}{0.75} & RB    & -0.15 & 0.04  & -0.45 & -0.16 & -0.06 & -0.18 & 0.02  & -0.06 \\
          &       & MSE   & 48.19 & 27.80 & 36.99 & 15.98 & 21.18 & 17.52 & 19.94 & 15.74 \\[1mm]
          & \multirow{2}[1]{*}{0.90} & RB    & -1.83 & 0.42  & -0.56 & -0.09 & -0.62 & -0.05 & 0.11  & 0.03 \\
          &       & MSE   & 99.26 & 74.83 & 74.58 & 43.00 & 44.60 & 40.83 & 40.68 & 36.31 \\
    \midrule
    \multirow{10}[2]{*}{$(100,100)$} & \multirow{2}[1]{*}{0.10} & RB    & -1.03 & 0.07  & 0.41  & 0.32  & -0.92 & 0.25  & 0.35  & 0.15 \\
          &       & MSE   & 14.47 & 13.00 & 11.19 & 9.91  & 16.95 & 14.43 & 14.18 & 11.95 \\[1mm]
          & \multirow{2}[0]{*}{0.25} & RB    & -0.54 & 0.06  & 0.06  & 0.03  & -0.52 & -0.02 & 0.10  & -0.12 \\
          &       & MSE   & 12.76 & 10.64 & 10.81 & 8.35  & 15.41 & 11.85 & 13.73 & 9.82 \\[1mm]
          & \multirow{2}[0]{*}{0.50} & RB    & -0.48 & 0.03  & -0.03 & -0.02 & -0.41 & 0.02  & 0.14  & -0.03 \\
          &       & MSE   & 15.70 & 11.67 & 12.92 & 8.89  & 20.57 & 13.41 & 17.58 & 9.84 \\[1mm]
          & \multirow{2}[0]{*}{0.75} & RB    & -0.61 & -0.04 & -0.19 & -0.14 & -0.71 & -0.17 & 0.04  & -0.06 \\
          &       & MSE   & 24.94 & 18.73 & 19.98 & 13.73 & 32.29 & 20.67 & 27.94 & 16.02 \\[1mm]
          & \multirow{2}[1]{*}{0.90} & RB    & -0.94 & 0.05  & -0.20 & -0.09 & -1.11 & 0.03  & 0.01  & 0.09 \\
          &       & MSE   & 48.17 & 42.30 & 41.07 & 31.30 & 70.72 & 54.02 & 57.47 & 40.26 \\
    \midrule
    \multirow{10}[2]{*}{$(200,200)$} & \multirow{2}[1]{*}{0.10} & RB    & -0.44 & 0.04  & 0.24  & 0.16  & -0.50 & 0.15  & 0.15  & 0.07 \\
          &       & MSE   & 7.03  & 6.34  & 5.54  & 4.80  & 8.17  & 7.06  & 6.80  & 5.81 \\[1mm]
          & \multirow{2}[0]{*}{0.25} & RB    & -0.29 & 0.01  & 0.08  & 0.05  & -0.31 & -0.04 & -0.01 & -0.10 \\
          &       & MSE   & 6.53  & 5.24  & 5.19  & 3.92  & 7.59  & 5.89  & 6.52  & 4.79 \\[1mm]
          & \multirow{2}[0]{*}{0.50} & RB    & -0.23 & 0.02  & -0.03 & -0.03 & -0.31 & -0.11 & -0.02 & -0.07 \\
          &       & MSE   & 7.83  & 5.84  & 6.15  & 4.25  & 9.90  & 6.03  & 8.39  & 4.76 \\[1mm]
          & \multirow{2}[0]{*}{0.75} & RB    & -0.38 & -0.12 & -0.11 & -0.10 & -0.29 & 0.05  & 0.02  & 0.03 \\
          &       & MSE   & 11.98 & 9.21  & 10.19 & 7.24  & 17.41 & 11.09 & 14.98 & 8.33 \\[1mm]
          & \multirow{2}[1]{*}{0.90} & RB    & -0.48 & 0.00  & -0.09 & -0.07 & -0.42 & 0.09  & 0.08  & 0.13 \\
          &       & MSE   & 23.81 & 20.31 & 19.73 & 15.34 & 36.06 & 26.76 & 31.15 & 20.87 \\
    \bottomrule
    \end{tabular}%
\end{table}%

\begin{table}[htbp]
  \centering
  \footnotesize
  \caption{CP (\%) and AL for  three 95\% CIs of $100\tau\%$-quantile (gamma distributions)}
  \label{tab_quant_ci_gamma}
    \begin{tabular}{ccccccccc}
    \toprule
    \multirow{2}[2]{*}{$(n_0,n_1)$} & \multirow{2}[2]{*}{$\tau$} &       & \multicolumn{3}{c}{$Gam(8,1.125)$} & \multicolumn{3}{c}{$Gam(6,1.5)$} \\
          &       &       & EMP    & DRM   & DRM-EE & EMP    & DRM   & DRM-EE \\
    \midrule
    \multirow{10}[2]{*}{(50,50)} & \multirow{2}[1]{*}{0.10} & CP    & 94.7  & 95.1  & 95.5  & 93.7  & 94.5  & 94.9 \\
          &       & AL    & 2.10  & 1.89  & 1.77  & 2.24  & 2.10  & 1.93 \\
          & \multirow{2}[0]{*}{0.25} & CP    & 94.9  & 94.7  & 94.5  & 95.4  & 94.5  & 94.8 \\
          &       & AL    & 2.03  & 1.82  & 1.60  & 2.25  & 2.04  & 1.73 \\
          & \multirow{2}[0]{*}{0.50} & CP    & 93.2  & 94.4  & 94.3  & 94.2  & 95.1  & 94.9 \\
          &       & AL    & 2.06  & 1.99  & 1.62  & 2.33  & 2.31  & 1.74 \\
          & \multirow{2}[0]{*}{0.75} & CP    & 94.2  & 94.2  & 94.0  & 95.8  & 93.7  & 93.7 \\
          &       & AL    & 2.86  & 2.55  & 2.10  & 3.46  & 3.04  & 2.29 \\
          & \multirow{2}[1]{*}{0.90} & CP    & 94.8  & 94.7  & 94.9  & 94.7  & 94.3  & 94.9 \\
          &       & AL    & 4.17  & 3.73  & 3.27  & 5.03  & 4.68  & 3.80 \\
    \midrule
    \multirow{10}[2]{*}{(50,150)} & \multirow{2}[1]{*}{0.10} & CP    & 94.7  & 95.2  & 95.4  & 94.1  & 95.2  & 95.5 \\
          &       & AL    & 2.10  & 1.77  & 1.56  & 1.29  & 1.24  & 1.20 \\
          & \multirow{2}[0]{*}{0.25} & CP    & 94.9  & 94.8  & 94.7  & 94.8  & 94.5  & 94.5 \\
          &       & AL    & 2.03  & 1.72  & 1.33  & 1.28  & 1.22  & 1.14 \\
          & \multirow{2}[0]{*}{0.50} & CP    & 93.2  & 94.7  & 94.7  & 94.2  & 94.1  & 94.0 \\
          &       & AL    & 2.06  & 1.86  & 1.20  & 1.37  & 1.37  & 1.22 \\
          & \multirow{2}[0]{*}{0.75} & CP    & 94.2  & 94.4  & 94.9  & 95.9  & 95.6  & 95.4 \\
          &       & AL    & 2.86  & 2.41  & 1.58  & 1.88  & 1.79  & 1.60 \\
          & \multirow{2}[1]{*}{0.90} & CP    & 94.8  & 94.8  & 95.0  & 94.4  & 95.4  & 95.1 \\
          &       & AL    & 4.17  & 3.44  & 2.60  & 2.72  & 2.68  & 2.47 \\
    \midrule
    \multirow{10}[2]{*}{(100,100)} & \multirow{2}[1]{*}{0.10} & CP    & 95.0  & 95.2  & 94.5  & 95.0  & 94.0  & 93.7 \\
          &       & AL    & 1.53  & 1.33  & 1.24  & 1.66  & 1.47  & 1.35 \\
          & \multirow{2}[0]{*}{0.25} & CP    & 94.8  & 94.4  & 93.9  & 95.2  & 95.1  & 95.1 \\
          &       & AL    & 1.42  & 1.28  & 1.12  & 1.58  & 1.44  & 1.22 \\
          & \multirow{2}[0]{*}{0.50} & CP    & 93.8  & 94.9  & 94.2  & 94.3  & 94.3  & 94.3 \\
          &       & AL    & 1.48  & 1.39  & 1.14  & 1.73  & 1.61  & 1.22 \\
          & \multirow{2}[0]{*}{0.75} & CP    & 95.0  & 94.5  & 95.4  & 95.2  & 95.5  & 94.9 \\
          &       & AL    & 1.99  & 1.78  & 1.46  & 2.33  & 2.11  & 1.60 \\
          & \multirow{2}[1]{*}{0.90} & CP    & 96.2  & 95.5  & 94.8  & 94.9  & 94.9  & 95.3 \\
          &       & AL    & 3.01  & 2.57  & 2.25  & 3.58  & 3.15  & 2.60 \\
    \midrule
    \multirow{10}[2]{*}{(200,200)} & \multirow{2}[1]{*}{0.10} & CP    & 93.8  & 95.2  & 94.7  & 93.8  & 95.2  & 95.4 \\
          &       & AL    & 1.02  & 0.94  & 0.87  & 1.10  & 1.03  & 0.95 \\
          & \multirow{2}[0]{*}{0.25} & CP    & 95.4  & 95.4  & 95.2  & 94.2  & 95.1  & 94.8 \\
          &       & AL    & 0.99  & 0.90  & 0.79  & 1.10  & 1.01  & 0.85 \\
          & \multirow{2}[0]{*}{0.50} & CP    & 94.4  & 95.0  & 94.8  & 94.2  & 94.8  & 94.8 \\
          &       & AL    & 1.05  & 0.98  & 0.81  & 1.21  & 1.13  & 0.85 \\
          & \multirow{2}[0]{*}{0.75} & CP    & 95.1  & 95.0  & 95.0  & 95.5  & 94.8  & 94.9 \\
          &       & AL    & 1.37  & 1.26  & 1.04  & 1.61  & 1.49  & 1.13 \\
          & \multirow{2}[1]{*}{0.90} & CP    & 93.7  & 94.9  & 94.7  & 94.6  & 94.1  & 95.0 \\
          &       & AL    & 1.94  & 1.78  & 1.55  & 2.30  & 2.17  & 1.80 \\
    \bottomrule
    \end{tabular}%
\end{table}%

\section{Some Preliminary Results}

Recall that the profile empirical log-likelihood of $(\bpsi, \btheta)$ is 
\begin{equation*}
  \ell_n(\bpsi, \btheta) = -\Sumij \log \left\{1 + \lambda\left[\exp\{\btheta^\top\Q(X_{ij})\} -1\right] + \bnu^\top \g(X_{ij};\bpsi, \btheta)\right\} + \Sumj \btheta^\top\Q(X_{1j}),
\end{equation*}
where the Lagrange multipliers satisfy 
\begin{eqnarray*}
\Sumij \frac{\exp\{\btheta^\top\Q(X_{ij})\}-1}{1 + \lambda\left[\exp\{\btheta^\top\Q(X_{ij})\} -1\right] + \bnu^\top \g(X_{ij};\bpsi, \btheta)} = 0,\\
\Sumij \frac{\g(X_{ij};\bpsi, \btheta)}{1 + \lambda\left[\exp\{\btheta^\top\Q(X_{ij})\} -1\right] + \bnu^\top \g(X_{ij};\bpsi, \btheta)} = {\bf 0}.
\end{eqnarray*}
Then $ \ell_n(\bpsi, \btheta)$ can be rewritten as 
$$ 
\ell_n(\bpsi, \btheta)=\inf \limits_{\lambda,\bnu}l_n(\bpsi, \btheta,\lambda,\bnu),
$$ where 
\begin{eqnarray*}
 && l_n(\bpsi, \btheta,\lambda,\bnu) \\
 &=& -\Sumij \log \lb 1+\lambda \Lm \exp\lb \btheta^\top \Q(X_{ij})\rb-1\Rm+\bnu^\top\g(X_{ij};\bpsi,\btheta) \rb + \Sumj \{\btheta^\top\Q(X_{1j})\}.
\end{eqnarray*}
Equivalently, $\ell_n(\bpsi, \btheta)=l_n(\bpsi, \btheta,\lambda,\bnu)$ with $\lambda$ and $\bnu$ being the solution to 
$$
\frac{\partial l_n(\bpsi, \btheta,\lambda,\bnu)}{\partial \lambda}=0~~\mbox{and}~~ 
\frac{\partial l_n(\bpsi, \btheta,\lambda,\bnu)}{\partial \bnu}={\bf 0}. 
$$

With the above preparation, 
it can be verified that the maximum empirical likelihood estimate (MELE) $(\hat\bpsi, \hat\btheta)$ of $(\bpsi, \btheta)$ 
and the corresponding Lagrange multipliers $(\hat\lambda,\hat\bnu)$ satisfy
\begin{eqnarray*}
\frac{\partial l_n(\hat\bpsi, \hat\btheta,\hat\lambda,\hat\bnu)}{\partial \btheta}={\bf 0},~~
\frac{\partial l_n(\hat\bpsi, \hat\btheta,\hat\lambda,\hat\bnu)}{\partial \bbeta}={\bf 0},~~
\frac{\partial l_n(\hat\bpsi, \hat\btheta,\hat\lambda,\hat\bnu)}{\partial \lambda}=0,~~
\frac{\partial l_n(\hat\bpsi, \hat\btheta,\hat\lambda,\hat\bnu)}{\partial \bnu}={\bf 0}. 
\end{eqnarray*}
To investigate the asymptotic properties of $\hat\bpsi$ and $\hat\btheta$, we need their approximations. 
We first find the first and second derivatives of $l_n(\bpsi, \btheta,\lambda,\bnu)$. 

For convenience of presentation, we recall and define some notation. 
We use $\bpsi^*$ and $\btheta^*$ to denote the true values of $\bpsi$ and $\btheta$. 
Let 
\begin{eqnarray*}
&& \omega(x; \btheta) = \exp\left\{ \btheta^\top \Q(x)\right\},
~~\omega(x) = \omega(x; \btheta^*),
~~\lambda^*=n_1/n,\\
&&h(x) = 1 + \lambda^* \left\{ \omega(x) - 1 \right\},
~~h_1(x) = \lambda^*\omega(x)/h(x),
~~h_0(x)=(1-\lambda^*)/h(x),\\
&&\G(x;\bpsi,\btheta) = (\omega(x; \btheta) - 1, \g(x;\bpsi,\btheta)^\top)^\top,
~~\G(x) = \G(x;\bpsi^*,\btheta^*).
\end{eqnarray*}
Note that $\omega(\cdot)$, $h(\cdot)$, $h_0(\cdot)$, $h_1(\cdot)$, and $\G(\cdot)$ depend on $\bpsi^*$ and/or $\btheta^*$, and $h_0(x) + h_1(x) =1$.
By Condition C1, $\lambda^*$ is a fixed value and does not depend on the total sample size $n$. 


Recall that $\etab=(\bpsi^\top,\btheta^\top)^\top$ and $\bu =(\lambda, \bnu^\top)^\top $. 
Let  $\bgamma = (\etab^\top, \bu^\top)$. We further define
\begin{eqnarray*}
&&\hat\etab=({\hat\bpsi}^\top,{\hat\btheta}^\top)^\top,
~~\hat\bu =(\hat\lambda, \hat\bnu^\top)^\top,
~~ \hat\bgamma = (\hat\etab^\top, \hat\bu^\top),\\
&&\etab^*=({\bpsi^*}^\top,{\btheta^*}^\top)^\top,
~~\bu^* =(\lambda^*, {\bf 0}_{1\times r})^\top,
~~\bgamma^* = ({\etab^*}^\top, {\bu^*}^\top).
\end{eqnarray*}
In the following, 
we use $l_n(\bgamma)$, $\g(x;\etab)$, and $\G(x;\etab) $
to denote $l_n(\bpsi, \btheta,\lambda,\bnu)$, $\g(x;\bpsi,\btheta)$, and $\G(x;\bpsi,\btheta) $.

\subsection{First and second derivatives of $l_n(\bgamma)$}
After some algebra, the first derivatives of $l_n(\bgamma)$ are found to be: 
\begin{eqnarray*}
 \frac{\partial l_n(\bgamma)}{\partial \bpsi}&=&
 -\Sumij \frac{ \{\partial \g(X_{ij};\etab)/\partial \bpsi\}^\top \bnu}{1+\lambda \lb \omega(X_{ij};\btheta)-1\rb +\bnu^\top\g(X_{ij};\etab)},\\
  \frac{\partial l_n(\bgamma)}{\partial \btheta}&=&
  -\Sumij \frac{\lambda \omega(X_{ij};\btheta)\Q(X_{ij})+\{ \partial \g(X_{ij};\etab)/\partial \btheta\}^\top \bnu}{1+\lambda \lb \omega(X_{ij};\btheta)-1\rb +\bnu^\top\g(X_{ij};\etab)} + \Sumj \Q(X_{1j}), \\
  \frac{\partial l_n(\bgamma)}{\partial \bu}&=&
  -\Sumij \frac{\G(X_{ij};\etab)}{1+\lambda \lb \omega(X_{ij};\btheta)-1\rb +\bnu^\top\g(X_{ij};\etab)}.
\end{eqnarray*}
Then the first derivatives at the true values $\etab^*$ and $\bu^*$ are 
\begin{equation*}
\S_n = 
\frac{\partial l_n(\bgamma^*)}{\partial \bgamma} = 
\ba{c}
 \frac{\partial l_n(\bgamma^*)}{\partial \bpsi}\\
 \frac{\partial l_n(\bgamma^*)}{\partial \btheta}\\
 \frac{\partial l_n(\bgamma^*)}{\partial \bu}
 \ea
 = \ba{c} {\bf 0} \\ \S_{n\bstheta}\\\S_{n\bu}\ea ,
\end{equation*}
where 
\begin{eqnarray*}
\S_{n\bstheta} = \Sumj \Q(X_{1j}) - \Sumij h_1(X_{ij})\Q(X_{ij}),~~
\S_{n\bu} = -\Sumij \frac{\G(X_{ij})}{h(X_{ij})}.
\end{eqnarray*}

Similarly, we calculate the second derivatives of $l_n(\bgamma)$. Evaluating them at $\bgamma^*$ gives: 
\begin{equation}
  \label{2nd_deriv}
 \frac{\partial^2 l_n(\bgamma^*)}{\partial \bgamma \partial \bgamma^\top} =
 \ba{ccccc}
 {\bf 0} &{\bf 0}&\frac{\partial^2 l_n(\bgamma^*)}{\partial \btheta \partial \bu^\top}\\
 {\bf 0} & \frac{\partial^2 l_n(\bgamma^*)}{\partial \bbeta \partial \bbeta^\top} & \frac{\partial^2 l_n(\bgamma^*)}{\partial \bbeta \partial \bu^\top}\\
  
 \frac{\partial^2 l_n(\bgamma^*)}{\partial \bu \partial \btheta^\top} & \frac{\partial^2 l_n(\bgamma^*)}{\partial \bu \partial \bbeta^\top}& \frac{\partial^2 l_n(\bgamma^*)}{\partial \bu \partial \bu^\top}
 \ea,
\end{equation}
where 
\begin{eqnarray*}
\frac{\partial^2 l_n(\bgamma^*)}{ \partial \bpsi \partial \bu^\top} &=& 
\ls\frac{\partial^2 l_n(\bgamma^*)}{ \partial \bu \partial \bpsi^\top} \rs^\top= - \Sumij \frac{ \{\partial \G(X_{ij};\etab^*)/ \partial \bpsi\}^\top}{h(X_{ij})};\\
\frac{\partial^2 l_n(\bgamma^*)}{ \partial \btheta \partial \btheta^\top} &=&
- \Sumij h_0(X_{ij}) h_1(X_{ij})\Q(X_{ij})\Q(X_{ij})^\top;\\
\frac{\partial^2 l_n(\bgamma^*)}{ \partial \btheta \partial \bu^\top} &=& 
\ls\frac{\partial^2 l_n(\bgamma^*)}{ \partial \bu \partial \btheta^\top } \rs^\top \\
&=&
 \Sumij \frac{h_1(X_{ij}) \Q(X_{ij}) \G(X_{ij})^\top}{h(X_{ij})} -\Sumij \frac{\{\partial \G(X_{ij};\etab^*)/\partial \btheta\}^\top }{h(X_{ij})};\\
\frac{\partial^2 l_n(\bgamma^*)}{\partial \bu \partial \bu^\top} &=& \Sumij\frac{\G(X_{ij})\G(X_{ij})^\top}{h(X_{ij})^2}.
\end{eqnarray*}

\subsection{Some useful lemmas}
We first review a lemma from the supplementary material of \cite{qin2014using}, which helps to ease the calculation in our proofs. 
In the following, we assume that the DRM \eqref{drm_def2} is satisfied as required in Condition C2.  
\begin{lemma}
\label{expectation}
Suppose that $\mathcal{S}$ is an arbitrary vector-valued function.
Let $E_0(\cdot)$ represent the expectation operator with respect to $F_0$ and $X$ refer to a random variable from $F_0$.
Then we have for $j =1,\cdots, n_1$,
\begin{equation*}
  E\lb \mathcal{S}(X_{1j})\rb = E_0\lb \omega(X) \mathcal{S}(X)\rb~~~{\rm and}~~~E\lb \Sumij \mathcal{S}(X_{ij}) \rb = nE_0\lb \mathcal{S}(X)h(X)\rb. 
\end{equation*}
\end{lemma}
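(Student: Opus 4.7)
The plan is to prove both identities by directly unpacking the definition of expectation and invoking the DRM density relationship $dF_1(x) = \omega(x)\,dF_0(x)$ from \eqref{drm_def2}, where $\omega(x) = \exp\{\btheta^{*\top}\Q(x)\}$. The statement is essentially a change-of-measure identity combined with the allocation of sample sizes between the two groups, so no probabilistic machinery beyond Fubini is required.

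For the first identity, I would observe that $X_{1j} \sim F_1$ for every $j = 1,\ldots,n_1$, so
$$
E\{\mathcal{S}(X_{1j})\} = \int \mathcal{S}(x)\,dF_1(x) = \int \mathcal{S}(x)\,\omega(x)\,dF_0(x) = E_0\{\omega(X)\mathcal{S}(X)\},
$$
where the second equality is precisely the DRM relationship \eqref{drm_def2} at $\btheta = \btheta^*$. I would note in passing that this justifies the implicit assumption that $\int |\mathcal{S}(x)|\omega(x)\,dF_0(x) < \infty$ whenever $E\{\mathcal{S}(X_{1j})\}$ exists, which is covered by the moment conditions already imposed in C2 and C5 whenever $\mathcal{S}$ is taken to be any of the functions appearing in the derivatives of $l_n(\bgamma)$.

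For the second identity, I would split the double sum by group and apply the first identity to the $i=1$ terms and the definition of $E_0$ to the $i=0$ terms:
$$
E\Bigl\{\Sumij \mathcal{S}(X_{ij})\Bigr\} = n_0\,E_0\{\mathcal{S}(X)\} + n_1\,E_0\{\omega(X)\mathcal{S}(X)\}.
$$
Factoring $n$ out and using $\lambda^* = n_1/n$ (and hence $n_0/n = 1-\lambda^*$) yields
$$
E\Bigl\{\Sumij \mathcal{S}(X_{ij})\Bigr\} = n\,E_0\bigl\{[(1-\lambda^*) + \lambda^*\omega(X)]\mathcal{S}(X)\bigr\} = n\,E_0\{h(X)\mathcal{S}(X)\},
$$
where the last equality uses the definition $h(x) = 1 + \lambda^*\{\omega(x) - 1\}$.

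There is no real obstacle here; the only thing to be careful about is to state the identity vectorwise (the equalities hold componentwise since $\mathcal{S}$ is vector-valued) and to note that both sides are finite under the standing regularity conditions C2 and C5, which is what makes the lemma directly usable in the derivative computations for $\S_n$ and $\partial^2 l_n(\bgamma^*)/\partial\bgamma\partial\bgamma^\top$ that follow.
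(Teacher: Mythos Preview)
Your proposal is correct and follows essentially the same route as the paper's proof: both identities are obtained by the change-of-measure $dF_1 = \omega\,dF_0$ for the first, and by splitting the double sum by group, applying the first identity, and recombining via $h(x) = (1-\lambda^*) + \lambda^*\omega(x)$ for the second. Your added remarks on componentwise interpretation and integrability under C2 and C5 are reasonable elaborations but not needed for the core argument.
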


\begin{proof}
Under the DRM with true parameter $\btheta^*$, we have 
\begin{eqnarray*}
E\lb \mathcal{S}(X_{1j})\rb = \int \mathcal{S}(x) dF_1(x)= \int \mathcal{S}(x)\omega(x) dF_0(x)= E_0\lb \omega(X) \mathcal{S}(X)\rb.
\end{eqnarray*}

Using the fact that $\lambda^* = n_1/n$ and the definition of the function $h(\cdot)$, we further have 
\begin{eqnarray*}
E\lb \Sumij \mathcal{S}(X_{ij}) \rb &=& n_0E_0\lb \mathcal{S}(X)\rb + n_1 E_0\lb \omega(X) \mathcal{S}(X)\rb\\
&=& n\Lm (1-\lambda^*)E_0\lb \mathcal{S}(X)\rb + \lambda^* E_0\lb \omega(X) \mathcal{S}(X)\rb \Rm\\
&=& n E_0\Lm \{(1- \lambda^*) +\lambda^*\omega(X)\} \mathcal{S}(X)\Rm\\
&=& nE_0\lb \omega(X) \mathcal{S}(X)\rb.
\end{eqnarray*}

This completes the proof.
\end{proof}

Recall that 
\begin{eqnarray*}
&&
\A_{\bstheta\bstheta} = (1 - \lambda^*) E_0 \lb h_1(X)\Q(X)\Q(X)^\top \rb,
\\
&&\A_{\bstheta\bu} = \A_{\bu\bstheta}^\top = E_0 \lb \frac{\partial \G(X;\etab^*)}{ \partial \btheta} \rb^\top - E_0 \lb h_1(X) \Q(X) \G(X)^\top\rb,\\
&&\A_{\bspsi\bu} = \A_{\bu\bspsi}^\top = E_0 \lb \frac{\partial \G(X;\etab^*)}{ \partial \bpsi} \rb^\top ,~~~
\A_{\bu\bu} = E_0 \lb \frac{\G(X)\G(X)^\top}{h(X)} \rb.
\end{eqnarray*}
Applying Lemma \ref{expectation}, after some algebra, we have the following Lemma.
\begin{lemma}
\label{2nd_expect}
\begin{enumerate}
\item[(a)]
With the form of $\partial^2 l_n(\bgamma^*)/(\partial \bgamma \partial \bgamma^\top)$ defined in \eqref{2nd_deriv}, we have
\begin{equation*}
  -\frac{1}{n}E\lb\frac{\partial^2 l_n(\bgamma^*)}{\partial \bgamma \partial \bgamma^\top}\rb =\A = \ba{cccc}
  {\bf 0}&{\bf 0}&\A_{\bspsi\bu}\\
  {\bf 0}&\A_{\bstheta\bstheta}& \A_{\bstheta\bu}\\
  \A_{\bu\bspsi}&\A_{\bu\bstheta}&-\A_{\bu\bu}\\
  \ea.
\end{equation*}
 \item[(b)] Let $\S^*_n = (\S_{n\bstheta}^\top, \S_{n\bu}^\top)^\top$. Then as $n\to\infty$,
\begin{equation*}
  n^{-1/2}\S^*_n \to N({\bf 0}, \bGamma)
\end{equation*}
in distribution with 
\begin{eqnarray*}
 &&\e_{\bstheta} = \ba{c} 1\\{\bf 0}_{d\times 1} \ea,~~~
   \e_{\bu} = \ba{c} 1\\{\bf 0}_{r\times 1} \ea,~~~
   \C = \ba{c}
  \A_{\bstheta\bstheta}\e_{\bstheta}\\ 
  -\lambda^*(1-\lambda^*) \A_{\bu\bu}\e_{\bu}
  \ea, \\
 &&\text{and}~~\bGamma =\ba{cc}\A_{\bstheta\bstheta}&{\bf 0}\\ {\bf 0}& \A_{\bu\bu} \ea - \frac{1}{\lambda^*(1-\lambda^*)} \C\C^\top.\\
\end{eqnarray*}

 \end{enumerate}

\end{lemma}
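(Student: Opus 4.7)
\medskip

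\noindent\textbf{Proof proposal.} Both parts reduce to direct moment calculations, with Lemma~\ref{expectation} as the workhorse.

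For part (a) I would take each of the six blocks of the second-derivative matrix in display~\eqref{2nd_deriv}, apply Lemma~\ref{expectation} to the outer sum $\Sumij$, and then collapse using the arithmetic identities $h_0(x)+h_1(x)=1$, $h_0(x)h(x)=1-\lambda^*$, and $h_1(x)h(x)=\lambda^*\omega(x)$. For instance, the $\btheta\btheta$ block gives
\begin{equation*}
-\tfrac{1}{n}E\Bigl[\tfrac{\partial^2 l_n(\bgamma^*)}{\partial\btheta\partial\btheta^\top}\Bigr]
= E_0\{h_0(X)h_1(X)h(X)\Q(X)\Q(X)^\top\}
=(1-\lambda^*)E_0\{h_1(X)\Q(X)\Q(X)^\top\}=\A_{\bstheta\bstheta},
\end{equation*}
and the other blocks follow the same pattern, with the $\bpsi\bpsi$, $\bpsi\btheta$, and $\btheta\btheta$-to-$\bpsi$ pieces vanishing because each first derivative involving $\bpsi$ carries a factor of $\bnu$, which is zero at $\bgamma^*$.

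For part (b), write $\S_n^*=\sum_{j=1}^{n_0}\V_{0j}+\sum_{j=1}^{n_1}\V_{1j}$ where
\begin{equation*}
\V_{0j}=\begin{pmatrix}-h_1(X_{0j})\Q(X_{0j})\\ -\G(X_{0j})/h(X_{0j})\end{pmatrix},\qquad
\V_{1j}=\begin{pmatrix}h_0(X_{1j})\Q(X_{1j})\\ -\G(X_{1j})/h(X_{1j})\end{pmatrix},
\end{equation*}
using $1-h_1(x)=h_0(x)$ on the $\btheta$-component of the $F_1$ summand. The summands are independent across $i,j$, so I first show each $\V_{ij}$ has mean zero. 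For the $\bu$-component this is immediate because $E_0\{\G(X)\}={\bf 0}$ (the first entry vanishes by normalization of $F_1$, the rest by Condition~C4), and Lemma~\ref{expectation} transports the zero expectation to $F_1$ via the weight $h(x)$. For the $\btheta$-component of $\V_{0j}$ one just takes the $F_0$-expectation, while for $\V_{1j}$ one uses $E_1\{h_0(X)\Q(X)\}=E_0\{h_0(X)\omega(X)\Q(X)\}$ and the identity $\omega(X)h_0(X)/h(X)=h_1(X)(1-\lambda^*)/(\lambda^* h(X))$ to rewrite; a matching argument gives cancellation against the $F_0$ piece when the two samples are combined. Conditions~C2--C5 supply finite second moments, so the multivariate Lindeberg--L\'evy CLT applies to each of the two sums separately, and independence of the two samples gives joint asymptotic normality of $n^{-1/2}\S_n^*$.

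The remaining and most substantive step is to identify the resulting asymptotic covariance with $\bGamma$. Computing
\begin{equation*}
n^{-1}\mathrm{Var}(\S_n^*)=(1-\lambda^*)E_0\{\V_{01}\V_{01}^\top\}+\lambda^*E_1\{\V_{11}\V_{11}^\top\}-\text{(mean correction)},
\end{equation*}
converting the $F_1$ expectations to $F_0$ via Lemma~\ref{expectation}, and using the identities from part (a), the diagonal blocks combine into $E_0\{h_1(X)\Q(X)\Q(X)^\top\}$ and $E_0\{\G(X)\G(X)^\top/h(X)\}$ scaled by factors that I expect to simplify exactly to $\A_{\bstheta\bstheta}$ and $\A_{\bu\bu}$ after subtracting the rank-one correction from $(E_0\{h_1(X)\Q(X)\})(E_0\{h_1(X)\Q(X)\})^\top$ etc.; the off-diagonal block should produce $(\A_{\bstheta\bstheta}\e_{\bstheta})(\A_{\bu\bu}\e_{\bu})^\top$. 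The main obstacle is purely bookkeeping: recognizing that the mean-correction terms pool together into exactly the rank-one matrix $[\lambda^*(1-\lambda^*)]^{-1}\C\C^\top$, using crucially that the first coordinate of $\Q(x)$ is $1$ and the first coordinate of $\G(x)$ is $\omega(x)-1$, so that $\e_{\bstheta}$ and $\e_{\bu}$ extract precisely the rows and columns that are tied to the cross-sample normalization constraint implicit in the Lagrange multiplier for $\lambda$.
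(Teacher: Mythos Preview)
Your approach is essentially the same as the paper's: both parts are handled by applying Lemma~\ref{expectation} block by block, and for part (b) both you and the paper split $\S_n^*$ into the two sample-specific sums, compute means and second moments under $F_0$ and $F_1$ separately (converting $F_1$-expectations to $F_0$ via $dF_1=\omega\,dF_0$), and then collect the mean-correction terms into the rank-one piece $[\lambda^*(1-\lambda^*)]^{-1}\C\C^\top$.

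There is, however, one genuine slip in your write-up. You assert that ``each $\V_{ij}$ has mean zero,'' and for the $\bu$-component you say this is ``immediate because $E_0\{\G(X)\}={\bf 0}$.'' Neither $E_0\{\G(X)/h(X)\}$ nor $E_1\{\G(X)/h(X)\}=E_0\{\omega(X)\G(X)/h(X)\}$ is zero in general; what \emph{is} zero is the weighted combination
\[
(1-\lambda^*)\,E_0\!\left\{\tfrac{\G(X)}{h(X)}\right\}+\lambda^*\,E_0\!\left\{\tfrac{\omega(X)\G(X)}{h(X)}\right\}=E_0\{\G(X)\}={\bf 0},
\]
which is exactly Lemma~\ref{expectation} applied to the \emph{full} sum $\Sumij$. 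The same holds for the $\btheta$-component: $E_0\{-h_1(X)\Q(X)\}\neq{\bf 0}$ and $E_1\{h_0(X)\Q(X)\}\neq{\bf 0}$, but $n_0$ times the former plus $n_1$ times the latter vanishes (as you in effect acknowledge when you later speak of ``cancellation\ldots when the two samples are combined''). So the correct statement---and what the paper actually proves---is $E(\S_n^*)={\bf 0}$, not that the individual summands are centered. This matters for your variance display as well: the ``mean correction'' you subtract is the within-sample centering $(1-\lambda^*)E_0\{\V_{01}\}E_0\{\V_{01}\}^\top+\lambda^*E_1\{\V_{11}\}E_1\{\V_{11}\}^\top$, not a centering of the grand mean (which is already zero). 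Once you fix this, your sketch goes through and matches the paper line for line.
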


\begin{proof}
For (a): Note that Conditions C3 and C4 ensure that $\A$ is well defined. The results then follow by applying Lemma \ref{expectation} to each term of $E\lb\partial^2 l_n(\bgamma^*)/(\partial \bgamma \partial \bgamma^\top) \rb$. We use $E\lb\partial^2 l_n(\bgamma^*)/(\partial \btheta \partial \btheta^\top) \rb$ as an illustration;
for the other entries, the idea is similar and we omit the details. 

With Lemma \ref{expectation} and the fact that $h_0(x)h(x) = 1-\lambda^*$, we have 
\begin{eqnarray*}
-\frac{1}{n}E\lb\frac{\partial^2 l_n(\bgamma^*)}{\partial \btheta \partial \btheta^\top}\rb 
&=& \frac{1}{n} E\lb \Sumij h_0(X_{ij}) h_1(X_{ij})\Q(X_{ij})\Q(X_{ij})^\top\rb\\
&=& (1-\lambda^*)E_0\lb h_1(X)\Q(X)\Q(X)^\top \rb\\
&=& \A_{\bstheta\bstheta}.
\end{eqnarray*}

For (b): 
Conditions C2--C4 ensure that $E(\S^*_n)$  and $Var(\S^*_n)$ are well defined. 
We first use the results in Lemma \ref{expectation} to show that $E(\S^*_n) = {\bf 0}$.
For $E(\S_{n\bstheta})$,
\begin{eqnarray*}
E(\S_{n\bstheta}) &=& n_1 E\{\Q(X_{11})\} -n E_0 \{h(X)h_1(X)\Q(X)\}\\
&=& n_1 E_0\{\omega(X)\Q(X)\}-n E_0 \{\lambda^*\omega(X)\Q(X)\}\\
&=& {\bf 0}.
\end{eqnarray*}
The last step follows from the fact that $\lambda^*=n_1/n$. 

The unbiasedness of the EEs leads to 
\begin{eqnarray*}
E(\S_{n\bu}) &=& -nE_0\{\G(\X;\etab^*)\} = {\bf 0}.
\end{eqnarray*}
Hence, we have $E(\S^*_n) = {\bf 0}$.

Since $\S^*_n$ is a summation of independent random vectors, by the central limit theorem, 
 $$
 n^{-1/2}\S^*_n
 \to N({\bf 0}, \bGamma)
 $$ for some $\bGamma$. 
Next, we show that $\bGamma$ has the form claimed in the lemma. 

We start with the variances of $n^{-1/2}\S_{n\bstheta}$ and $n^{-1/2}\S_{n\bu}$. 
Note that
\begin{equation*}
  \S_{n\bstheta} = \Sumj h_0(X_{1j})\Q(X_{1j}) - \sum_{j=1}^{n_0}h_1(X_{0j})\Q(X_{0j}).
\end{equation*}
With the help of Lemma \ref{expectation}, we have 
\begin{eqnarray*}
Var( n^{-1/2}\S_{n\bstheta})
&=&\frac{1}{n} Var\ls \Sumj h_0(X_{1j})\Q(X_{1j}) - \sum_{j=1}^{n_0}h_1(X_{0j})\Q(X_{0j}) \rs\\
&=& \lambda^* E_0\lb h_0(X)^2\omega(X)\Q(X)\Q(X)^\top \rb \\
&&
+ (1-\lambda^*) E_0\lb h_1(X)^2\Q(X)\Q(X)^\top \rb\\
&&-\lambda^* E_0\lb h_0(X)\omega(X)\Q(X) \rb E_0\lb h_0(X)\omega(X)\Q(X)^\top \rb \\
&&- (1-\lambda^*) E_0\lb h_1(X)\Q(X) \rb E_0\lb h_1(X)\Q(X)^\top \rb. 
\end{eqnarray*}
Using the definitions of functions $h_1(\cdot)$ and $h_0(\cdot)$ and the fact that $\lambda^*=n_1/n$, 
we further have 
\begin{eqnarray*}
Var( n^{-1/2}\S_{n\bstheta})
&=& (1-\lambda^*) E_0 \lb h_1(X)\Q(X)\Q(X)^\top\rb\\
&& - \frac{1-\lambda^*}{\lambda^*} E_0\lb h_1(X)\Q(X) \rb E_0\lb h_1(X)\Q(X)^\top \rb\\
&=& \A_{\bstheta\bstheta} - \{\lambda^*(1-\lambda^*)\}^{-1}\A_{\bstheta\bstheta}\e_{\bstheta}\ls \A_{\bstheta\bstheta}\e_{\bstheta}\rs^\top.
\end{eqnarray*}

Similarly, we calculate the variance of $n^{-1/2}\S_{n\bu}$ as
\begin{eqnarray*}
Var(n^{-1/2}\S_{n\bu})&=&\frac{1}{n}Var\lb -\Sumij \frac{\G(X_{ij})}{h(X_{ij})} \rb \\
&=& \frac{1}{n}\Sumij E_0\lb\frac{\G(X_{ij})\G(X_{ij})^\top}{h(X_{ij})^2} \rb 
- \frac{1}{n}\sum_{j=1}^{n_0} E_0 \lb\frac{\G(X_{0j})}{h(X_{0j})} \rb E_0 \lb\frac{\G(X_{0j})^\top}{h(X_{0j})} \rb\\
&&-\frac{1}{n}\Sumj E_0 \lb\frac{\omega(X_{1j})\G(X_{1j})}{h(X_{1j})} \rb E_0 \lb\frac{\omega(X_{1j})\G(X_{1j})^\top}{h(X_{1j})} \rb \\
&=& \A_{\bu\bu}- (1-\lambda^*) E_0 \lb\frac{\G(X)}{h(X)} \rb E_0 \lb\frac{\G(X)^\top}{h(X)} \rb \\
&& -\lambda^* E_0 \lb\frac{\omega(X)\G(X)}{h(X)} \rb E_0 \lb\frac{\omega(X)\G(X)^\top}{h(X)} \rb.
\end{eqnarray*}

It can easily be verified that 
\begin{eqnarray*}
  && (1-\lambda^*) E_0 \lb\frac{\G(X)}{h(X)} \rb + \lambda^* E_0 \lb\frac{\omega(X)\G(X)}{h(X)} \rb = E_0 \lb \G(X) \rb = {\bf 0},
  \end{eqnarray*}
  which implies that 
 \begin{eqnarray*} 
  &&E_0\lb \frac{\{\omega(X) - 1\}\G(X) }{h(X)}\rb = -\frac{1}{\lambda^*}E_0\lb \frac{\G(X) }{h(X)}\rb=
  \A_{\bu\bu}\e_{\bu}. 
\end{eqnarray*}
Therefore, 
\begin{equation*}
  Var(n^{-1/2}\S_{n\bu}) = \A_{\bu\bu} - \lambda^*(1-\lambda^*) \A_{\bu\bu}\e_{\bu}(\A_{\bu\bu}\e_{\bu})^\top.
\end{equation*}
Lastly, we consider the covariance between $n^{-1/2}\S_{n\bstheta}$ and $n^{-1/2}\S_{n\bu}$:
\begin{eqnarray*}
&&Cov(n^{-1/2}\S_{n\bstheta}, n^{-1/2}\S_{n\bu})\\
 &=&-\frac{1}{n}Cov\ls \Sumj h_0(X_{1j})\Q(X_{1j}) - \sum_{j=1}^{n_0}h_1(X_{0j})\Q(X_{0j}), ~~\Sumij \frac{\G(X_{ij})^\top}{h(X_{ij})}\rs\\
 &=& -\frac{1}{n} \Sumj Cov \ls h_0(X_{1j})\Q(X_{1j}), \frac{\G(X_{1j})^\top}{h(X_{1j})} \rs
 +\frac{1}{n} \sum_{j=1}^{n_0} Cov \ls h_1(X_{0j};)\Q(X_{0j}),\frac{\G(X_{0j})^\top}{h(X_{0j})} \rs\\
 &=& \lambda^* E_0 \lb \omega(X)h_0(X)\Q(X)\rb E_0 \lb \frac{\omega(X)\G(X)^\top}{h(X)} \rb 
 - (1-\lambda^*) E_0 \lb h_1(X)\Q(X)\rb E_0 \lb \frac{\G(X)^\top}{h(X)} \rb\\
 &=& (1-\lambda^*) E_0 \lb h_1(X)\Q(X)\rb E_0 \lb \frac{\{\omega(X)-1\}\G(X)^\top}{h(X)} \rb\\
 &=& \A_{\bstheta\bstheta}\e_{\bstheta}(\A_{\bu\bu}\e_{\bu})^\top. 
\end{eqnarray*}
Then $ \bGamma=Var(n^{-1/2}\S^*_n)$ has the form claimed in the lemma. 
This completes the proof. 
\end{proof}

\section{Proof of Theorem 1}

Recall that $\hat\bgamma =(\hetab^\top,\hat{\bu}^\top)^\top$ is the MELE of $\bgamma$. Using an argument similar to that in \cite{qin1994empirical} and \cite{qin2014using}, we have that $\hetab = \etab^* +O_p(n^{-1/2})$ and $\hat{\bu} = \bu^*+O_p(n^{-1/2})$. 
To develop the asymptotic approximation of $\hetab$, we apply the first-order Taylor expansion to $\partial l_n(\hat\bgamma)/\partial \bgamma$ at the true value $\bgamma^*$. 
This, together with Condition C5,  gives 
\begin{equation*}
 {\bf 0}=  \S_n + \frac{\partial^2 l_n(\bgamma^*)}{\partial \bgamma \partial \bgamma^\top}(\hat\bgamma - \bgamma^*) + o_p(n^{1/2}).
\end{equation*}
With the law of large numbers and Lemma \ref{2nd_expect}, we have 
\begin{equation}
 \label{approx_expec_2nd}
  \frac{1}{n}\frac{\partial^2 l_n(\bgamma^*)}{\partial \bgamma \partial \bgamma^\top} = \frac{1}{n}E\left\{\frac{\partial^2 l_n(\bgamma^*)}{\partial \bgamma \partial \bgamma^\top}\right\} +o_p(1) = -\A +o_p(1) .
\end{equation}
Hence, we can write
\begin{eqnarray}
\label{taylor1}
&&\ba{cc}
  {\bf 0}&{\bf 0}\\
  {\bf 0}&\A_{\bstheta\bstheta}
  \ea (\hetab-\etab^*)  
 + \ba{c}\A_{\bspsi\bu}\\ \A_{\bstheta\bu} \ea (\hat{\bu} - \bu_0)
  = \frac{1}{n}\ba{c} {\bf 0}\\ \S_{n\bstheta}\ea + o_p(n^{-\frac{1}{2}});\\
  \label{taylor2}
  && \ba{cc}
  \A_{\bu\bspsi}&\A_{\bu\bstheta}\\
  \ea(\hetab-\etab^*) - \A_{\bu\bu}(\hat{\bu} - \bu_0)
   = \frac{1}{n}\S_{n\bu} + o_p(n^{-\frac{1}{2}}).
\end{eqnarray}
Recall that 
\begin{equation}
\label{def.uvj}
  \U = \ba{cc} {\bf 0}& \A_{\bspsi\bu}\\
  \A_{\bstheta\bstheta}&\A_{\bstheta\bu}\ea,~~\V = \ba{cc}\A_{\bstheta\bstheta}&{\bf 0}\\ {\bf 0}& \A_{\bu\bu} \ea,~~\text{and}~~\J = \U\V^{-1}\U^\top.
\end{equation}
Conditions C3 and C4 ensure that $\U$, $\V$, and $\J$ have full rank. 
Then \eqref{taylor1} and \eqref{taylor2} together imply that 
\begin{equation*}
   n^{1/2} (\hetab - \etab^*) = \J^{-1}\U\V^{-1}(n^{-1/2}\S^*_n) + o_p(1).
\end{equation*}
Applying Lemma \ref{2nd_expect} and Slusky's theorem, we have as $n \to \infty$
\begin{equation*}
  n^{1/2} (\hetab - \etab^*) \to N({\bf 0 }, \bSigma)
\end{equation*}
in distribution with 
$\bSigma = \J^{-1}\U\V^{-1}Var(n^{-1/2}\S^*_n)\V^{-1}\U^\top\J^{-1}$.

Recall that
$$
Var(n^{-1/2}\S^*_n)= \bGamma=\V
- \frac{1}{\lambda^*(1-\lambda^*)} \C\C^\top~~\text{and}~~\C = \ba{c}
  \A_{\bstheta\bstheta}\e_{\bstheta}\\ 
  -\lambda^*(1-\lambda^*) \A_{\bu\bu}\e_{\bu}
  \ea.
$$
Since
\begin{equation*}
  \A_{\bspsi\bu}\e_{\bu} = {\bf 0}~~\text{and}~~ \A_{\bstheta\bu}\e_{\bu} = \frac{1}{\lambda^*}E_0\lb h_1(X)\Q(X)\rb = \frac{1}{\lambda^*(1-\lambda^*)}\A_{\bstheta\bstheta}\e_{\bstheta},
\end{equation*}
 we have
\begin{equation*}
  \U\V^{-1}\C = \U\V^{-1}\ba{c}
  \A_{\bstheta\bstheta}\e_{\bstheta}\\ 
  -\lambda^*(1-\lambda^*) \A_{\bu\bu}\e_{\bu}
  \ea
  =\ba{c} -\lambda^*(1-\lambda^*)\A_{\bspsi\bu}\e_{\bu}\\
 \A_{\bstheta\bstheta}\e_{\bstheta} - \lambda^*(1-\lambda^*) \A_{\bstheta\bu}\e_{\bu}\ea 
 = {\bf 0}.
\end{equation*}
This leads to $\bSigma = \J^{-1}$ and completes the proof. 

\section{Proof of Corollary 1}

\noindent \textit{Part (a).}
The results in Theorem 1 imply that 
$$
n^{1/2} (\hat\btheta - \btheta^*) \to 
N \ls {\bf 0}, \J_{\btheta}\rs
$$
in distribution, where 
\begin{equation*}
  \J_{\btheta} = \lb \A_{\bstheta\bstheta} + \A_{\bstheta\bu}\A_{\bu\bu}^{-1}\A_{\bu\bstheta} - \A_{\bstheta\bu}\A_{\bu\bu}^{-1}\A_{\bu\bspsi} \ls\A_{\bspsi\bu}\A_{\bu\bu}^{-1}\A_{\bu\bspsi}\rs^{-1} \A_{\bspsi\bu}\A_{\bu\bu}^{-1}\A_{\bu\bstheta}\rb^{-1}.
\end{equation*}  
From the definitions of $\A_{\bu\bspsi}$ and $\A_{\bu\bu}$, we have 
\begin{equation*}
  \A_{\bu\bspsi}=\ba{c} 0\\ E_0 \lb \frac{\partial \g(\X;\etab^*)}{\partial \bpsi}\rb \ea~~\text{and}~~
  \A_{\bu\bu} = \ba{cc} E_0\lb \frac{\{\omega(X) -1\}^2}{h(X)}\rb & E_0\lb\frac{\{\omega(X) -1\}\g(\X;\etab^*)}{h(X)} \rb\\
  E_0\lb \frac{\{\omega(X) -1\}\g(\X;\etab^*)^\top}{h(X)} \rb & E_0\lb \frac{\g(\X;\etab^*)\g(\X;\etab^*)^\top}{h(X)} \rb \ea.
\end{equation*}
We write
\begin{equation*}
  \A_{\bu\bu}^{-1} = \ba{cc}\A_{\bu\bu}^{11}&\A_{\bu\bu}^{12}\\
  \A_{\bu\bu}^{21}&\A_{\bu\bu}^{22}\ea.
\end{equation*}
When $r =p$, we have
\begin{eqnarray*}
  \ls \A_{\bspsi\bu}\A_{\bu\bu}^{-1}\A_{\bu\bspsi}\rs^{-1} &=& \Lm E_0 \lb \frac{\partial \g(\X;\etab^*)}{\partial \bpsi}\rb^\top \A_{\bu\bu}^{22} E_0 \lb \frac{\partial \g(\X;\etab^*)}{\partial \bpsi}\rb \Rm^{-1}\\
  &=&
   \Lm E_0 \lb \frac{\partial \g(\X;\etab^*)}{\partial \bpsi}\rb^\top\Rm^{-1} \ls\A_{\bu\bu}^{22}\rs^{-1} \Lm E_0 \lb \frac{\partial \g(\X;\etab^*)}{\partial \bpsi}\rb\Rm^{-1}.
\end{eqnarray*}
This leads to  
\begin{eqnarray*}
&&\A_{\bu\bu}^{-1}\A_{\bu\bspsi} \ls\A_{\bspsi\bu}\A_{\bu\bu}^{-1}\A_{\bu\bspsi}\rs^{-1} \A_{\bspsi\bu}\A_{\bu\bu}^{-1} \\
&=& \ba{cc}\A_{\bu\bu}^{12}\ls\A_{\bu\bu}^{22}\rs^{-1}\A_{\bu\bu}^{21}&\A_{\bu\bu}^{12}\\
  \A_{\bu\bu}^{21}&\A_{\bu\bu}^{22}\ea\\
&=&\A_{\bu\bu}^{-1} - 
\ba{cc} \A_{\bu\bu}^{11}- \A_{\bu\bu}^{12}\ls\A_{\bu\bu}^{22}\rs^{-1}\A_{\bu\bu}^{21}&{\bf 0}\\ {\bf 0}&{\bf 0}\ea.
\end{eqnarray*}

It can be verified that $\A_{\bstheta\bu}\e_{\bu} = \{\lambda^*(1-\lambda^*)\}^{-1}\A_{\bstheta\bstheta}\e_{\bstheta}$ and 
\begin{eqnarray*}
 \lb \A_{\bu\bu}^{11}- \A_{\bu\bu}^{12}\ls\A_{\bu\bu}^{22}\rs^{-1}\A_{\bu\bu}^{21}\rb^{-1} = E_0\lb \frac{\{\omega(X) -1\}^2}{h(X)}\rb 
 = \frac{1}{\lambda^*(1-\lambda^*)}\lb 1 - \frac{\e_{\bstheta}^\top \A_{\bstheta\bstheta}\e_{\bstheta}}{\lambda^*(1-\lambda^*)} \rb.
\end{eqnarray*}
By the Woodbury matrix identity, the variance matrix $\J_{\btheta}$ can be simplified as
\begin{eqnarray*}
  \J_{\btheta} &=& \lb \A_{\bstheta\bstheta} + \lb\frac{\A_{\bstheta\bstheta}\e_{\bstheta}}{\lambda^*(1-\lambda^*)} \rb  
  \Lm E_0\lb \frac{\{\omega(X) -1\}^2}{h(X)}\rb \Rm^{-1} \lb\frac{\A_{\bstheta\bstheta}\e_{\bstheta}}{\lambda^*(1-\lambda^*)} \rb^\top\rb^{-1}\\
  &=& \A_{\bstheta\bstheta}^{-1} - \frac{\e_{\bstheta}\e_{\bstheta}^\top}{\lambda^*(1-\lambda^*)}.
\end{eqnarray*}
This is the same as the asymptotic variance of $n^{1/2}(\Tilde{\btheta} - \btheta^*)$ shown in Lemma 1 of \cite{qin1997goodness} under Conditions C1--C3. \\

\noindent \textit{Part (b).}
For $r >p$, let $\U_m,\V_m,\J_m$ denote the corresponding $\U,\V,\J$ matrices obtained by using only the first $m$ EEs of $\g(\x;\etab)$. With Theorem 1, to complete the proof of this part it suffices to show that 
\begin{equation*}
  \J_m \geq \J_{m-1}.
\end{equation*}

From the definition of the matrix $\U$, we notice that $\U_m$ has one more column than $\U_{m-1}$, and we denote this extra column $u_m$. Then we have $\U_m = (\U_{m-1},u_m)$. Following the proof of Corollary 1 of \cite{qin1994empirical}, we have 
\begin{equation}
  \label{coro1_V_inequality} 
  \V_m^{-1} \geq \ba{cc}
\V_{m-1}^{-1}&{\bf 0}\\{\bf 0}&{ 0}
\ea.
\end{equation}
Therefore, 
\begin{equation}
\label{coro1_J_inequality}
  \J_m = \U_m \V_m^{-1} \U^\top_m \geq (\U_{m-1},u_m) \ba{cc}
\V_{m-1}^{-1}&{\bf 0} \\{\bf 0}&{0}
\ea(\U_{m-1},u_m)^\top = \J_{m-1},
\end{equation}
 as required. This completes the proof. 

\section{Proof of Theorem 2}
Recall that the null hypothesis forms a constraint 
\begin{equation*}
  \mathcal{C}_3 = \lb \etab: \H(\etab) = {\bf 0} \rb,
\end{equation*}
and 
the ELR statistic for testing $H_0: \H(\etab) = 0$ is defined as 
\begin{eqnarray*}
  R_n& =& 2\lb \sup_{\bspsi,\bstheta} \ell_n(\bpsi,\btheta) - \sup \limits_{\etab \in \mathcal{C}_3} \ell_n(\bpsi,\btheta) \rb\\
  &=&2\lb\ell_n(\hat\bpsi,\hat\btheta)-\ell_n(\check\bpsi,\check\btheta)\rb,
\end{eqnarray*}
where 
$$
(\check\bpsi,\check\btheta)=\arg\max_{\etab\in \mathcal{C}_3} \ell_n(\bpsi,\btheta) . 
$$
In the following steps, we find the approximations of $\ell_n(\hat\bpsi,\hat\btheta)$ and $\ell_n(\check\bpsi,\check\btheta)$. 

We first derive the approximation of $l_n(\bgamma)$ when $\bgamma$ is in the $n^{-1/2}$ neighborhood of its true value $\bgamma^*$. Applying the second-order Taylor expansion to $l_n(\bgamma)$,  and  using  \eqref{approx_expec_2nd} and Condition C5, we have 
\begin{eqnarray*}
 l_n(\bgamma)
 &=& l_n(\bgamma^*) + \S_{n}^\top (\bgamma - \bgamma^*) - \frac{n}{2}(\bgamma - \bgamma^*)^\top \A (\bgamma - \bgamma^*) + o_p(1)\\
 &=& l_n(\bgamma^*) + \ba{cc} {\bf 0}& \S^\top_{n\btheta}\ea (\etab - \etab^*) + \S_{n\bu}^\top(\bu - \bu_0) -\frac{n}{2}(\etab - \etab^*)^\top \ba{cc}
  {\bf 0}&{\bf 0}\\
  {\bf 0}&\A_{\bstheta\bstheta}
  \ea (\etab - \etab^*)\\
  &&- n (\etab - \etab^*)^\top \ba{c}\A_{\bspsi\bu}\\ \A_{\bstheta\bu}\ea (\bu - \bu^*) + \frac{n}{2}(\bu - \bu^*)^\top\A_{\bu\bu}(\bu - \bu^*)+o_p(1).
\end{eqnarray*}
Setting the derivative of $l_n(\bgamma)$ with respect to $\bu$ equal to zero gives 
\begin{equation*}
  \bu - \bu^* =  \A_{\bu\bu}^{-1} \ba{cc} \A_{\bu\bspsi}&\A_{\bu\bstheta}\\
  \ea(\etab-\etab^*) - \A_{\bu\bu}^{-1} \ls\frac{1}{n}\S_{n\bu}\rs + o_p(n^{-\frac{1}{2}}).
\end{equation*}
Substituting the approximation of $ \bu - \bu^*$ into $ l_n(\bgamma)$ leads
to an approximation of $ \ell_n(\bpsi,\btheta) $: 
\begin{equation}
\label{approx_profile_eta}
  \ell_n(\bpsi,\btheta)= l_n(\bgamma^*) + (\etab-\etab^*)^\top \U\V^{-1}\S^*_{n} - \frac{n}{2}(\etab-\etab^*)^\top\J(\etab-\etab^*) - \frac{1}{2n}\S_{n\bu}^\top\A_{\bu\bu}^{-1}\S_{n\bu} +o_p(1).
\end{equation}
With the approximation of $\hetab$ in \eqref{approx_profile_eta}, we then have 
\begin{eqnarray*}
\ell_n(\hat\bpsi,\hat\btheta)&=& l_n(\bgamma^*) + \frac{1}{2n}\S^{*\top}_n\V^{-1}\U^\top \J^{-1}\U\V^{-1}\S^*_n - \frac{1}{2n}\S_{n\bu}^\top\A_{\bu\bu}^{-1}\S_{n\bu} + o_p(1).
\end{eqnarray*}

Next, we find an approximation for $\check\etab=(\check\bpsi^\top, \check\btheta^\top)^\top$. 
We first define 
\begin{equation*}
 \ell_n^* (\bpsi, \btheta,\v) = \ell_n(\bpsi, \btheta) + n \v^\top \H(\etab), 
\end{equation*}
where $\v$ is the Lagrange multiplier. 
Then $\check\etab$ and the corresponding Lagrange multiplier $\check\v$ satisfy 
\begin{equation}
\label{elln.star.equation}
\frac{\partial \ell_n^* (\check\bpsi,\check\btheta,\check\v) }{\partial \bpsi}
={\bf 0},~~
\frac{\partial \ell_n^* (\check\bpsi,\check\btheta,\check\v) }{\partial \btheta}
={\bf 0},~~
\frac{\partial \ell_n^* (\check\bpsi,\check\btheta,\check\v) }{\partial \v}
={\bf 0}.
\end{equation}
It is easy to verify that $ \check{\bgamma} = \bgamma^* +O_p(n^{-1/2})$ and $\check{\v}= O_p(n^{-1/2})$ \citep{qin1995estimating,qin2014using}.

Let $\h^* = \partial \H(\etab^*)/\partial \etab$.
When $\etab$ is in the $n^{-1/2}$ neighborhood of the true value $\etab^*$, we approximate $\H(\etab)$
with $\H(\etab) = \h^*(\etab-\etab^*) + o_p(n^{-1/2})$. 
Together with the approximation of $ \ell_n(\bpsi,\btheta)$ in \eqref{approx_profile_eta}, we approximate $ \ell_n^* (\bpsi, \btheta,\v) $ at an $n^{-1/2}$ neighbor of $(\bpsi_0^\top,\btheta_0^\top,{\bf 0}_{1\times q})^\top$ with 
\begin{eqnarray*}
 \ell_n^* (\bpsi, \btheta,\v) &=& l_n(\bgamma^*) + (\etab-\etab^*)^\top \U\V^{-1}\S^*_{n} - \frac{n}{2}(\etab-\etab^*)^\top\J(\etab-\etab^*) \nonumber \\
  \label{approx_check_liklihood}
  && + n\v^\top \h^* (\etab- \etab^*) - \frac{1}{2n}\S_{n\bu}^\top\A_{\bu\bu}^{-1}\S_{n\bu} +o_p(1).
\end{eqnarray*}
Applying the first-order Taylor expansion to (\ref{elln.star.equation}), 
we have 
\begin{equation*}
\ba{cc}
 \J& - {\h^*}^\top\\ - \h^*& {\bf 0}
\ea \ba{c}
\check{\etab} - \etab^*\\\check{\v}
\ea = \frac{1}{n}\ba{c}
\U\V^{-1}\S^*_n\\ {\bf 0}
\ea + o_p(n^{-\frac{1}{2}}).
\end{equation*}
Hence, 
\begin{eqnarray}
n^{1/2}(\check{\etab} - \etab^*) &=& 
(\I,{\bf 0})\ba{cc}
 \J& - {\h^*}^\top\\ - {\h^*}^\top&{\bf 0}
\ea^{-1}\ba{c}
 n^{-1/2}\U\V^{-1}\S_n\\{\bf 0}
\ea + o_p(1) \nonumber \\ 
\label{eta_hat_star}
&=&\{ \J^{-1} -\J^{-1}{\h^*}^\top( \h^* \J^{-1} {\h^*}^\top)^{-1} \h^* \J^{-1}\} \U\V^{-1}(n^{-1/2}\S^*_n) + o_p(1),
\end{eqnarray}
where $\I$ is the identity matrix with dimension $p+d+1$.

Substituting the expression of $\check{\etab}$ in \eqref{eta_hat_star} into \eqref{approx_profile_eta} gives  
\begin{eqnarray*}
\ell_n(\check\bpsi,\check\btheta) &=& l_n(\bgamma^*) + \frac{1}{2n}\S^{*\top}_n\V^{-1}\U^\top \{\J^{-1} - \J^{-1}{\h^*}^\top( \h^* \J^{-1} {\h^*}^\top)^{-1} \h^* \J^{-1}\} \U\V^{-1}\S^*_n \\
&&- \frac{1}{2n}\S_{n\bu}^\top\A_{\bu\bu}^{-1}\S_{n\bu} + o_p(1).
\end{eqnarray*}
Hence, the ELR statistic $R_n$ can be written as
\begin{eqnarray*}
 R_n &=& \frac{1}{n} \S^{*\top}_n\V^{-1}\U^\top \J^{-1}{\h^*}^\top( \h^* \J^{-1} {\h^*}^\top)^{-1} \h^* \J^{-1}\U\V^{-1}\S^*_n + o_p(1).
\end{eqnarray*}
We find that 
$\J^{-1/2}{\h^*}^\top( \h^* \J^{-1} {\h^*}^\top)^{-1} \h^* \J^{-1/2}$ is an idempotent matrix with rank $q$. 
Further, as $n\to\infty$, 
$$\J^{-1/2}\U\V^{-1}(n^{-1/2}\S^*_n) \to N(0,\I)$$ in distribution. Therefore, the limiting distribution of $R_n$ is $\chi^2_q$ under $H_0$.

\section{Proofs of Theorem 3 and Corollary 2}
We start with the proof of Theorem 3. 
Recall that 
the ELR statistic for testing the validity of the EEs
is defined as 
\begin{equation*}
  W_n 
  = 2\lb \ell_{nd}(\Tilde{\btheta}) - \ell_n(\hat\bpsi, \hat\btheta) \rb.
\end{equation*}

We first find an approximation of $\ell_{nd}(\Tilde{\btheta})$. 
Applying the second-order Taylor expansion to $\ell_{nd}(\Tilde{\btheta})$ at the true value $\btheta^*$, we have 
\begin{equation*}
  \ell_{nd}(\Tilde{\btheta}) = \ell_{nd}(\btheta^*) + (\Tilde{\btheta} - \btheta^*)^\top\frac{ \partial \ell_{nd}(\btheta^*)}{\partial \btheta}+
  \frac{1}{2}(\Tilde{\btheta} - \btheta^*)^\top\frac{ \partial^2 \ell_{nd}(\btheta^*)}{\partial \btheta\partial \btheta^\top}(\Tilde{\btheta} - \btheta^*) +
  o_p(1).
\end{equation*}
The fact that $\bnu^* = {\bf 0}$ implies $\ell_{nd}(\btheta^*) = l_n(\bgamma^*)$. According to \cite{qin1997goodness}, it is easy to verify that  
\begin{equation*}
  \Tilde{\btheta} - \btheta^* = \frac{1}{n} \A_{\bstheta\bstheta}^{-1}\frac{ \partial \ell_{nd}(\btheta^*)}{\partial \btheta}+o_p(n^{-1/2}), ~~
  \frac{ \partial \ell_{nd}(\btheta^*)}{\partial \btheta} = \S_{n\bstheta}, 
  ~~{\rm and}~~
 \frac{1}{n}
 \frac{ \partial^2 \ell_{nd}(\btheta^*)}{\partial \btheta\partial \btheta^\top}
 =-\A_{\btheta\btheta}+o_p(1) 
  .
\end{equation*}
Then 
\begin{equation*}
  \ell_{nd}(\Tilde{\btheta}) = l_n(\bgamma^*) + \frac{1}{2n}\S_{n\btheta}^\top \A_{\bstheta\bstheta}^{-1}\S_{n\btheta}
  +o_p(1).
\end{equation*}

Hence, the ELR statistic can be written as 
\begin{eqnarray}
\nonumber
  W_n &=& 2\lb \ell_{nd}(\Tilde{\btheta}) - \ell_n(\hat\bpsi, \hat\btheta) \rb\\
\nonumber  &=& \frac{1}{n}\S_{n\btheta}^\top \A_{\bstheta\bstheta}^{-1}\S_{n\btheta} + \frac{1}{n}\S_{n\bu}^\top\A_{\bu\bu}^{-1}\S_{n\bu}-\frac{1}{n}\S^{*\top}_n\V^{-1}\U^\top \J^{-1}\U\V^{-1}\S^*_n  \\
  &=& \frac{1}{n}\S_n^{*\top} \V^{-1} ( \V - \U^\top\J^{-1}\U ) \V^{-1}\S^*_n+ o_p(1). 
  \label{wn.appro}
\end{eqnarray}

Since $\V$ is a positive-definite matrix, we define an inner product on the vector space $\mathbb{R}^{2+d+r}$ as $<\boldsymbol{a},\boldsymbol{b}>_{\V^{-1}} = \boldsymbol{a} ^\top \V^{-1} \boldsymbol{b}$ for any vector $\boldsymbol{a},\boldsymbol{b}$ in the vector space. Recall that  
\begin{equation*}
  \C = \ba{c}
  \A_{\bstheta\bstheta}\e_{\bstheta}\\ 
  -\lambda^*(1-\lambda^*) \A_{\bu\bu}\e_{\bu}
  \ea.
\end{equation*}
The vector $\C$ and each row in $\U$ are linearly independent in the inner product space because $\U\V^{-1}\C = {\bf 0}$. 
Let $\mathcal{V}$ be the inner product space spanned by the vector $\C$ and each row in $\U$. Then there exists an orthogonal complement $\mathcal{B}$ of the subspace $\mathcal{V}$ with the dimension $r-p$. Let the columns of $\C^{*}$ be the basis of the orthogonal complement $\mathcal{B}$. Then $\C^{*}$ satisfies $\C^{* \top}\V^{-1}(\C, \U^\top) = {\bf 0}$. Define $\M^\top = (\C^{*}, \C, \U^\top)$, 
which satisfies 
\begin{equation*}
  \M \V^{-1}\M^\top = \ba{ccc} 
  \C^{* \top} \V^{-1} \C^{*} & {\bf 0}& {\bf 0}\\ 
{\bf 0} & \C^\top\V^{-1}\C &{\bf 0} \\
{\bf 0}& {\bf 0}& \J \ea.
\end{equation*}

With the above construction, $\M$ is a full rank matrix and can be inverted. We can write the inverse of $\M \V^{-1} \M^\top$ as 
\begin{equation*}
(\M^\top)^{-1}\V\M^{-1} = \ba{ccc} 
(\C^{* \top}\V^{-1}\C^{*})^{-1} & {\bf 0}&{\bf 0}\\ 
{\bf 0} &(\C^\top\V^{-1}\C)^{-1} &{\bf 0}\\
{\bf 0}& {\bf 0}&\J^{-1} \ea.
\end{equation*}
Then 
\begin{eqnarray*}
\V &=& \M^\top (\M^\top)^{-1}\V\M^{-1} \M \\
&=&\C^{*} (\C^{* \top}\V^{-1}\C^{*})^{-1}\C^{* \top} +\C (\C^\top\V^{-1}\C)^{-1}\C^\top + \U^\top\J^{-1}\U.
\end{eqnarray*}

Note that  
\begin{eqnarray*}
 \C^\top\V^{-1}\S^*_n &=& \e_{\bstheta}^\top\S_{n\bstheta} - \lambda^*(1-\lambda^*)\e_{\bu}^\top\S_{n\bu}\\
 &=& n_1 -\Sumij h_1(X_{ij}) + \lambda^*(1-\lambda^*) \Sumij \frac{\omega(X_{ij}) -1}{h(X_{ij})}\\
 &=& 0.
\end{eqnarray*}
This helps to simplify $W_n$ as 
\begin{equation*}
 W_n = \frac{1}{n}\S_n^{*\top} \V^{-1}\C^{*} (\C^{* \top}\V^{-1}\C^{*})^{-1}\C^{* \top} \V^{-1}\S^*_n + o_p(1).
\end{equation*}

According to Lemma \ref{2nd_expect}, we have 
\begin{equation*}
  Var\ls n^{-1/2}\S^*_n\rs = \V - \frac{1}{\lambda^*(1-\lambda^*)}\C\C^\top. 
\end{equation*}
Together with $\C^{*\top}\V^{-1}\C = {\bf 0}$ and the fact that 
$\V^{-1/2}\{\C^* (\C^{*\top}\V^{-1}\C^{*})^{-1}\C^{*\top} \}\V^{-1/2}$ is idempotent with rank $r-p$, we have 
\begin{eqnarray*}
  && \{\V^{-1/2} \C^{*} (\C^{* \top}\V^{-1}\C^{*})^{-1}\C^{* \top} \V^{-1}\} Var\ls n^{-1/2}\S^*_n\rs \{\V^{-1} \C^{*} (\C^{* \top}\V^{-1}\C^{*})^{-1}\C^{* \top} \V^{-1/2}\}\\
  &=&\V^{-1/2} \C^{*} (\C^{* \top}\V^{-1/}\C^{*})^{-1}\C^{* \top} \V^{-1/2}.
\end{eqnarray*}
Therefore, $W_n$ asymptotically follows $\chi^2_{r-p}$ under $H_0$ as $n\to\infty$.

We now prove Corollary 2.  
Let $\S^{*}_{n1}$ be the first $ d+r-m+2$ elements of $\S^{*\top}_{n}$, 
$\U_1$ be the first $r-m$ columns of $\U$, 
$\V_1$ be the upper $(d+r-m+2)\times (d+r-m+2)$ matrix of $\V$, 
and $\J_1=\U_1\V_1^{-1}\U_1^\top$. 
Further, let $ \ell_{n1}(\bpsi, \btheta) $
be the profile empirical log-likelihood of $(\bpsi, \btheta)$
using only $\g_1(\x;\etab)$
and  
$$
(\hat\bpsi^*, \hat\btheta^*)
=\arg\max_{\bspsi, \bstheta} \ell_{n1}(\bpsi, \btheta) . 
$$

Following the techniques used to obtain \eqref{wn.appro}, 
we have 
\begin{eqnarray*}
2\lb \ell_{nd}(\Tilde{\btheta}) - \ell_{n1}(\hat\bpsi^*, \hat\btheta^*) \rb
  =\S^{*\top}_{n1}\V_1^{-1}(\V_1 - \U_1^\top \J_1^{-1}\U_1)\V_1^{-1}\S^*_{n1}  + o_p(1).
\end{eqnarray*}
Then, the ELR statistic $W_n^*$ has the following approximation: 
\begin{eqnarray*}
 W_{n}^*& =&2\{ \ell_{n1}(\hat\bpsi^*, \hat\btheta^*)-\ell_n(\hat\bpsi, \hat\btheta) \}\\
 &=&2\lb \ell_{nd}(\Tilde{\btheta}) - \ell_n(\hat\bpsi, \hat\btheta) \rb -2\lb \ell_{nd}(\Tilde{\btheta}) - \ell_{n1}(\hat\btheta^*,\hat\bbeta^*) \rb\\
 &=& \frac{1}{n} \Lm \S^{*\top}_{n}\V^{-1}(\V - \U^\top \J^{-1}\U)\V^{-1}\S^*_{n} - \S^{*\top}_{n1}\V_1^{-1}( \V_1 - \U_1^\top \J_1^{-1}\U_1)\V_1^{-1}\S^*_{n1}\Rm +o_p(1).
\end{eqnarray*}
With the technique used to prove Corollary 1, we have 
\begin{equation*}
  \V^{-1}(\V - \U^\top \J^{-1}\U)\V^{-1} \geq 
  \ba{cc} \V_1^{-1}\{\V_1 - \U_1^\top \J_1^{-1}\U_1\}\V_1^{-1} &{\bf 0}\\{\bf 0} &{\bf 0}\ea.
\end{equation*}
Then 
$$
 \frac{1}{n} \Lm \S^{*\top}_{n}\V^{-1}(\V - \U^\top \J^{-1}\U)\V^{-1}\S^*_{n} - \S^{*\top}_{n1}\V_1^{-1}( \V_1 - \U_1^\top \J_1^{-1}\U_1)\V_1^{-1}\S^*_{n1}\Rm
 \geq 0.
$$

Recall that as $n\to\infty$, 
$$
\frac{1}{n} \S^{*\top}_{n}\V^{-1}(\V - \U^\top \J^{-1}\U)\V^{-1}\S^*_{n} 
\to \chi_{r-p}^2
$$
in distribution. 
We can similarly prove that as $n\to\infty$, 
$$
\frac{1}{n}\S^{*\top}_{n1}\V_1^{-1}( \V_1 - \U_1^\top \J_1^{-1}\U_1)\V_1^{-1}\S^*_{n1}
\to \chi_{r-m-p}^2
$$
in distribution. 

By the arguments in \cite{qin1994empirical}, 
we conclude that $W_{n}^* \to \chi^2_{(r-p)-(r-m-p)} = \chi^2_m$ in distribution as $n \to \infty$. 

\section{Proof of Theorem 4}
For (a): 
We start with some preparation. 
For any $x$ in the support of $F_0$, let
\begin{eqnarray*}
F_0(x,\bgamma) = \frac{1}{n}\Sumij \frac{I(X_{ij} \leq x)}{1+\lambda \lb \omega(X_{ij};\btheta)-1\rb +\bnu^\top\g(X_{ij};\bpsi, \btheta)},\\
F_1(x,\bgamma) = \frac{1}{n}\Sumij \frac{\omega(X_{ij};\btheta) I(X_{ij} \leq x)}{1+\lambda \lb \omega(X_{ij};\btheta)-1\rb +\bnu^\top\g(X_{ij};\bpsi, \btheta)}.
\end{eqnarray*}
Then
\begin{eqnarray*}
\hat F_0(x) = F_0(x,\hat\bgamma),~~~ F_0(x,\bgamma^*) = \frac{1}{n}\Sumij \frac{I(X_{ij} \leq x)}{h(X_{ij})},\\
\hat F_1(x) = F_1(x,\hat\bgamma),~~~ F_1(x,\bgamma^*) = \frac{1}{n}\Sumij \frac{\omega(X_{ij})I(X_{ij} \leq x)}{h(X_{ij})}.
\end{eqnarray*}

Next, we explore the properties of the first derivatives of $F_0(x,\bgamma)$ and $F_1(x,\bgamma)$ at the true value $\bgamma^*$. Define
\begin{equation*}
  \frac{\partial F_0(x,\bgamma^*)}{\partial \bgamma} = \ba{c}\frac{\partial F_0(x,\bgamma^*)}{\partial \bpsi}\\ \frac{\partial F_0(x,\bgamma^*)}{\partial \btheta}\\ \frac{\partial F_0(x,\bgamma^*)}{\partial \bu}\ea,~~~
\frac{\partial F_1(x,\bgamma^*)}{\partial \bgamma} = \ba{c}\frac{\partial F_1(x,\bgamma^*)}{\partial \bpsi} \\ \frac{\partial F_1(x,\bgamma^*)}{\partial \btheta}\\ \frac{\partial F_1(x,\bgamma^*)}{\partial \bu}\ea, 
\end{equation*}
where 
\begin{eqnarray*}
&&\frac{\partial F_0(x,\bgamma^*)}{\partial \bpsi} = \frac{\partial F_1(x,\bgamma^*)}{\partial \bpsi} = {\bf 0},\\
&&\frac{\partial F_0(x,\bgamma^*)}{\partial \btheta} = -\frac{1}{n}\Sumij h_1(X_{ij})h(X_{ij})\Q(X_{ij})
I(X_{ij} \leq x),\\
&&\frac{\partial F_0(x,\bgamma^*)}{\partial \bu} =-\frac{1}{n}\Sumij \frac{\G(X_{ij})}{\{h(X_{ij})\}^2}I(X_{ij} \leq x),\\
&&\frac{\partial F_1(x,\bgamma^*)}{\partial \btheta}= \frac{1}{n}\Sumij \frac{\omega(X_{ij})}{h(X_{ij})}h_0(X_{ij})\Q(X_{ij})
 I(X_{ij} \leq x),\\
&&\frac{\partial F_1(x,\bgamma^*)}{\partial \bu} = -\frac{1}{n}\Sumij \frac{\omega(X_{ij})}{\{h(X_{ij})\}^2}\G(X_{ij})
 I(X_{ij} \leq x).
\end{eqnarray*}
Applying Lemma \ref{expectation}, we have the following results for $E\left\{ \frac{\partial F_0(x,\bgamma^*)}{\partial \bgamma}\right\} $ and $E\left\{ \frac{\partial F_1(x,\bgamma^*)}{\partial \bgamma}\right\}$.
\begin{lemma}
\label{lemma_cdf}
With the form of $\partial F_0(x,\bgamma^*)/\partial \bgamma$ and $\partial F_1(x,\bgamma^*)/\partial \bgamma$ defined above, we have 
\begin{eqnarray*}
-E\left\{ \frac{\partial F_0(x,\bgamma^*)}{\partial \bgamma}\right\} = \B_0(x) = \ba{c}{\bf 0}\\ \B_{0\btheta}(x)\\\B_{0\bu}(x) \ea = \ba{c}{\bf 0}\\ \B_{0}^*(x) \ea,\\
-E\left\{ \frac{\partial F_1(x,\bgamma^*)}{\partial \bgamma}\right\} = \B_1(x) = \ba{c}{\bf 0}\\ \B_{1\btheta}(x)\\\B_{1\bu}(x) \ea = \ba{c}{\bf 0}\\ \B_{1}^*(x) \ea,
\end{eqnarray*}
where 
\begin{eqnarray*}
 &&\B_{0\btheta}(x) =E_0\lb h_1(X)\Q(X) I(X \leq x) \rb ,~~
 \B_{0\bu}(x) = E_0\lb \frac{\G(X)}{h(X)}I(X\leq x) \rb, \\
 &&\B_{1\btheta}(x) = \frac{\lambda^*-1}{\lambda^*}E_0\lb h_1(X)\Q(X) I(X \leq x) \rb,~~
 \B_{1\bu}(x) = E_0\lb \frac{\omega(X)\G(X)}{h(X)}I(X\leq x) \rb.
\end{eqnarray*}
\end{lemma}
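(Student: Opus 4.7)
The plan is a direct block-by-block computation followed by an application of the expectation identity $E\lb \Sumij \mathcal{S}(X_{ij}) \rb = nE_0\lb \mathcal{S}(X)h(X)\rb$ from Lemma~\ref{expectation}. First I would observe that $F_0(x,\bgamma)$ and $F_1(x,\bgamma)$ depend on $\bpsi$ only through the term $\bnu^\top \g(X_{ij};\bpsi,\btheta)$ inside the common denominator $K(X_{ij},\bgamma)=1+\lambda[\omega(X_{ij};\btheta)-1]+\bnu^\top \g(X_{ij};\bpsi,\btheta)$. Since $\bnu^*={\bf 0}$, the chain rule gives $\partial F_0/\partial\bpsi|_{\bgamma^*}={\bf 0}$ and $\partial F_1/\partial\bpsi|_{\bgamma^*}={\bf 0}$, which accounts for the zero blocks of $\B_0(x)$ and $\B_1(x)$.

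For the remaining $F_0$ blocks I would use $K(X_{ij},\bgamma^*)=h(X_{ij})$, $\partial K/\partial\btheta|_{\bgamma^*}=\lambda^*\omega(X_{ij})\Q(X_{ij})$, and $\partial K/\partial\bu|_{\bgamma^*}=\G(X_{ij})$, together with $h_1(x)=\lambda^*\omega(x)/h(x)$, to read off the explicit forms of $\partial F_0/\partial\btheta$ and $\partial F_0/\partial\bu$ given just above the lemma. Applying Lemma~\ref{expectation} absorbs exactly one factor of $h(X_{ij})$ from the denominator and yields $-E\{\partial F_0/\partial\btheta\}=E_0\lb h_1(X)\Q(X) I(X\leq x)\rb=\B_{0\btheta}(x)$ and $-E\{\partial F_0/\partial\bu\}=E_0\lb \G(X)I(X\leq x)/h(X)\rb=\B_{0\bu}(x)$.

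The $F_1$ case is slightly more delicate because $\omega(X_{ij};\btheta)$ also appears in the numerator, so $\partial F_1/\partial \btheta$ picks up two chain-rule contributions. Combining them at $\bgamma^*$ gives $\partial F_1/\partial\btheta|_{\bgamma^*}=n^{-1}\Sumij \{\omega(X_{ij})/h(X_{ij})\}\{1-h_1(X_{ij})\}\Q(X_{ij}) I(X_{ij}\leq x)$, where I use $h_0(x)+h_1(x)=1$ to rewrite $1-h_1(X_{ij})$ as $h_0(X_{ij})$. Taking the expectation via Lemma~\ref{expectation} then gives $E_0\lb \omega(X)h_0(X)\Q(X)I(X\leq x)\rb$, and invoking the algebraic identity $\omega(X)h_0(X)=\{(1-\lambda^*)/\lambda^*\}\,h_1(X)$ converts this into $\{(1-\lambda^*)/\lambda^*\}E_0\lb h_1(X)\Q(X)I(X\leq x)\rb$, which upon the overall sign flip is $\B_{1\btheta}(x)$. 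The $\bu$-block for $F_1$ is analogous but easier, since the $\omega(X_{ij};\btheta)$ factor in the numerator is $\btheta$-independent when differentiating in $\bu$, so Lemma~\ref{expectation} directly delivers $\B_{1\bu}(x)=E_0\lb \omega(X)\G(X)I(X\leq x)/h(X)\rb$.

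The only real obstacle is bookkeeping: tracking the two chain-rule terms that produce $\partial F_1/\partial\btheta$ and spotting the identity $\omega(X)h_0(X)=\{(1-\lambda^*)/\lambda^*\}\,h_1(X)$ that yields the (potentially counter-intuitive) coefficient $(\lambda^*-1)/\lambda^*$ in $\B_{1\btheta}(x)$. No integrability issue arises, since Conditions C2--C4 ensure that all the relevant moments of $\Q(X)$ and $\G(X)$ under $F_0$ are finite and that each summand is bounded by an $F_0$-integrable envelope in a neighborhood of $\bgamma^*$.
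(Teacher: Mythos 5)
Your proposal is correct and follows essentially the same route as the paper: differentiate $F_0(x,\bgamma)$ and $F_1(x,\bgamma)$ at $\bgamma^*$ (using $\bnu^*={\bf 0}$ to kill the $\bpsi$-blocks and the $\g$-derivative terms), then apply the identity $E\{\Sumij \mathcal{S}(X_{ij})\}=nE_0\{\mathcal{S}(X)h(X)\}$ from Lemma~\ref{expectation} together with $\omega(X)h_0(X)=\{(1-\lambda^*)/\lambda^*\}h_1(X)$ to obtain the stated forms, including the coefficient $(\lambda^*-1)/\lambda^*$ in $\B_{1\btheta}(x)$. Your two-term chain-rule computation for $\partial F_1/\partial\btheta$ and your expression $\lambda^*\omega(X_{ij})\Q(X_{ij})/h^2(X_{ij})=\{h_1(X_{ij})/h(X_{ij})\}\Q(X_{ij})$ for the $F_0$ case are exactly what is needed.
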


We now move to the joint asymptotic normality of $\hat{F}_l(x)$ and $\hat{F}_s(y)$. 
We first find an approximation for $\hat{F}_l(x)$ for $l=0$ and 1. 
Applying the first-order Taylor expansion to $\hat{F}_l(x)$ and using the results in Lemma \ref{lemma_cdf}, we have 
\begin{eqnarray*}
\hat{F}_l(x) &=& F_l(x,\bgamma^*) - \B^*_l(x)^\top(\hat\bgamma^* -\bgamma^*) + o_p(n^{-1/2})\\
&=& F_l(x,\bgamma^*) - ({\bf 0}, \B_{l\bstheta}(x)^\top)(\hat\etab^* -\etab^*) - \B_{0\bu}(x)^\top(\hat\bu - \bu^*)+o_p(n^{-1/2}).
\end{eqnarray*}
Using the relationship in \eqref{taylor2} and the definitions of the matrices $\U$ and $\V$ in \eqref{def.uvj}, we have 
\begin{eqnarray*}
\hat{F}_l(x) &=& F_l(x,\bgamma^*) - \B^*_l(x)^\top\V^{-1}\U^\top(\hat\etab^* -\etab^*) + \frac{1}{n}\B_{l\bu}(x)^\top\A_{\bu\bu}^{-1}\S_{n\bu}+o_p(n^{-1/2})\\
&=&F_l(x,\bgamma^*) - \B^*_l(x)^\top \lb \V^{-1}\U^\top(\hat\etab^* -\etab^*) - \ba{cc} {\bf 0} & {\bf 0} \\ {\bf 0} &\A_{\bu\bu}^{-1} \ea \ls \frac{1}{n}\S_{n}^*\rs \rb+o_p(n^{-1/2}).
\end{eqnarray*}
Recall that $\hetab - \etab^* = \J^{-1}\U\V^{-1}(n^{-1}\S^*_n) + o_p(n^{-1/2})$. The approximation of $\hat{F}_l(x)$ is then given by 
\begin{equation*}
\hat{F}_l(x) = F_l(x,\bgamma^*) - \frac{1}{n}\B^*_l(x)^\top\W \S^*_n  + o_p(n^{-1/2})
\end{equation*}
with 
\begin{equation*}
  \W = \V^{-1} \U^\top \J^{-1}\U\V^{-1} - \ba{cc} {\bf 0}&{\bf 0}\\ {\bf 0} & \A_{\bu\bu}^{-1}\ea.
\end{equation*}

Note that $F_l(x) = E_0\{F_l(x,\bgamma^*)\}$. Then
\begin{equation*}
 n^{1/2}\{\hat{F}_l(x) - F_l(x) \}= n^{1/2} \{F_l(x,\bgamma^*) - F_l(x)\} - n^{-1/2}\B^*_l(x)^\top\W \S^*_n + o_p(1).
\end{equation*}
The two leading terms are summations of independent random variables and both have mean zero. 
Hence, 
as $n\to\infty$, 
$$
\sqrt{n}
\left(
\begin{array}{c}
\hat{F}_l(x) - F_l(x)\\
\hat{F}_s(y) - F_s(y)\\
\end{array}
\right)
\to 
N\Big({\bf 0}, \bSigma_{ls}(x,y) \Big), 
$$ 
where 
\begin{equation*}
  \bSigma_{ls}(x,y) = \ba{cc} \sigma_{ll}(x,x) &\sigma_{ls}(x,y) \\ \sigma_{sl}(y,x) &\sigma_{ss}(y,y) \ea.
\end{equation*}

To complete the proof of (a), we need to argue that $ \bSigma_{ls}(x,y) $ has the form claimed in the lemma. 
According to the expression of $\hat{F}_l(x) - F_l(x)$, we have 
\begin{eqnarray*}
  \sigma_{ll}(x,x) &=& n Var\lb F_l(x,\bgamma^*) \rb + n^{-1}Var(\B^*_l(x)^\top\W\S^*_n)\\ 
  &&- 2 Cov\lb F_l(x,\bgamma^*),\B^*_l(x)^\top\W\S^*_n\rb;\\
  \sigma_{ss}(y,y) &=& n Var\lb F_s(y,\bgamma^*)\rb + n^{-1}Var(\B^*_s(y)^\top\W\S^*_n)\\ 
  &&-2 Cov\lb F_s(y,\bgamma^*),\B^*_s(y)^\top\W\S^*_n\rb;\\
  \sigma_{ls}(x,y) &=& n Cov \lb F_l(x,\bgamma^*), F_s(y,\bgamma^*)\rb - Cov\lb F_l(x,\bgamma^*),\B^*_s(y)^\top\W\S^*_n\rb \\
&&- Cov \lb F_s(y,\bgamma^*),\B^*_l(x)^\top\W\S^*_n\rb
+\B^*_l(x)^\top \{n^{-1}Var(\W\S^*_n)\}\B^*_s(y);\\
 \sigma_{sl}(y,x) &=& \sigma_{ls}(x,y).
\end{eqnarray*}

Next, we calculate the covariances and variances appearing above. We start with the covariance 
and variance related to $F_l(x,\bgamma^*)$ and $F_s(y,\bgamma^*)$.  
Let $x \wedge y = \min\{x,y\}$. 
Using Lemma \ref{expectation}, we have 
\begin{eqnarray*}
 && nCov\lb F_0(x,\bgamma^*),~F_0(y,\bgamma^*) \rb \\
 &=& (1-\lambda^*)Cov \lb \frac{I(X_{01} \leq x)}{h(X_{01})},~ \frac{I(X_{01} \leq y)}{h(X_{01})} \rb +\lambda^*Cov \lb \frac{I(X_{11} \leq x)}{h(X_{11})},~ \frac{I(X_{11} \leq y)}{h(X_{11})} \rb\\
 &=& E_0\lb\frac{I(X\leq x\wedge y)}{h(X)} \rb -(1-\lambda^*) E_0 \lb \frac{I(X\leq x)}{h(X)} \rb E_0 \lb \frac{I(X\leq y)}{h(X)} \rb \\
  &&- \lambda^* E_0 \lb\frac{\omega(X)I(X\leq x)}{h(X)} \rb E_0 \lb \frac{\omega(X)I(X\leq y)}{h(X)}\rb. 
\end{eqnarray*}

After some algebra, we have that for any $x$ in the support of $F_0$,
\begin{eqnarray*}
 &&\B_{0\bu}(x)^\top\e_{\bu} = E_0 \lb\frac{\omega(X)I(X\leq x)}{h(X)} \rb - E_0 \lb\frac{I(X\leq x)}{h(X)} \rb,\\
 &&F_0(x) = E_0 \lb\frac{I(X\leq x)}{h(X)} \rb + \lambda^*\B_{0\bu}(x)^\top\e_{\bu}.
\end{eqnarray*}
Then 
the covariance $nCov\lb F_0(x,\bgamma^*),~F_0(y,\bgamma^*) \rb$ is simplified as 
\begin{eqnarray*}
&& nCov\lb F_0(x,\bgamma^*),~F_0(y,\bgamma^*) \rb \\
&=& E_0\lb\frac{I(X\leq x\wedge y)}{h(X)} \rb - \lambda^*\B_{0\bu}(x)^\top\e_{\bu}\e_{\bu}^\top\B_{0\bu}(y) - \lambda^*\B_{0\bu}(x)^\top\e_{\bu}E_0 \lb\frac{I(X\leq y)}{h(X)} \rb\\
&&- \lambda^*E_0 \lb\frac{I(X\leq x)}{h(X)}\rb \e_{\bu}^\top\B_{0\bu}(y) -E_0 \lb\frac{I(X\leq x)}{h(X)}\rb E_0 \lb\frac{I(X\leq y)}{h(X)}\rb \\
&=& E_0\lb\frac{I(X\leq x\wedge y)}{h(X)} \rb - \lambda^*\B_{0\bu}(x)^\top\e_{\bu}\e_{\bu}^\top\B_{0\bu}(y) - \lambda^*\B_{0\bu}(x)^\top\e_{\bu}\Lm F_0(y) - \lambda^*\e_{\bu}^\top\B_{0\bu}(y) \Rm\\
&&- E_0 \lb\frac{I(X\leq x)}{h(X)}\rb F_0(y) \\
&=& E_0\lb\frac{I(X\leq x\wedge y)}{h(X)} \rb - F_0(x)F_0(y) - \lambda^*(1-\lambda^*)\B_{0\bu}(x)^\top\e_{\bu}\e_{\bu}^\top\B_{0\bu}(y).
\end{eqnarray*}

The covariances $nCov\lb F_0(x,\bgamma^*),~F_0(y,\bgamma^*) \rb$ and $nCov\lb F_0(x,\bgamma^*),~F_1(y,\bgamma^*) \rb$ can be found in a similar manner. 
For $nCov\lb F_1(x,\bgamma^*),~F_1(y,\bgamma^*) \rb$, 
we have 
\begin{eqnarray*}
&&nCov\lb F_1(x,\bgamma^*),~F_1(y,\bgamma^*) \rb\\
&=&E_0\lb\frac{\omega^2(X)I(X\leq x\wedge y)}{h(X)} \rb -(1-\lambda^*) E_0 \lb \frac{\omega(X)I(X\leq x)}{h(X)} \rb E_0 \lb \frac{\omega(X)I(X\leq y)}{h(X)} \rb \\
&&- \lambda^* E_0 \lb\frac{\omega^2(X)I(X\leq x)}{h(X)} \rb E_0 \lb \frac{\omega^2(X)I(X\leq y)}{h(X)}\rb\\
  &=& E_0\lb\frac{\omega^2(X)I(X\leq x\wedge y)}{h(X)} \rb - F_1(x)F_1(y) - \lambda^*(1-\lambda^*)\B_{1\bu}(x)^\top\e_{\bu}\e_{\bu}^\top\B_{1\bu}(y)
\end{eqnarray*}
and 
\begin{eqnarray*}
&&nCov\lb F_0(x,\bgamma^*),~F_1(y,\bgamma^*) \rb\\
&=&E_0\lb\frac{\omega(X)I(X\leq x\wedge y)}{h(X)} \rb -(1-\lambda^*) E_0 \lb \frac{I(X\leq x)}{h(X)} \rb E_0 \lb \frac{\omega(X)I(X\leq y)}{h(X)} \rb \\
&&- \lambda^* E_0 \lb\frac{\omega(X)I(X\leq x)}{h(X)} \rb E_0 \lb \frac{\omega^2(X)I(X\leq y)}{h(X)}\rb\\
  &=& E_0\lb\frac{\omega(X)I(X\leq x\wedge y)}{h(X)} \rb - F_0(x)F_1(y) - \lambda^*(1-\lambda^*)\B_{0\bu}(x)^\top\e_{\bu}\e_{\bu}^\top\B_{1\bu}(y).
\end{eqnarray*}

In summary, for any $l,s\in\{0,1\}$, we get 
\begin{eqnarray}
\nonumber
 &&nCov\lb F_l(x,\bgamma^*),~F_s(y,\bgamma^*) \rb  \\
 &=& 
  E_0\lb\frac{\omega^{l+s}(X)I(X\leq x\wedge y)}{h(X)} \rb - F_l(x)F_s(y) - \lambda^*(1-\lambda^*)\B_{l\bu}(x)^\top\e_{\bu}\e_{\bu}^\top\B_{s\bu}(y).
  \label{cov.hatF}
\end{eqnarray}

Next, we consider the cross-terms with $\S^*_n$. We present the calculation of $Cov\lb F_0(x,\bgamma^*),\S^*_n\rb$ as an illustration. Using Lemma \ref{expectation}, we get
\begin{eqnarray*}
&&Cov\lb F_0(x,\bgamma^*),\S_{n\bstheta}\rb\\
&=& \frac{1}{n}Cov\lb \Sumij \frac{I(X_{ij} \leq x)}{h(X_{ij})},~~ \Sumj h_0(X_{1j})\Q(X_{1j})^\top - \sum_{j=1}^{n_0}h_1(X_{0j})\Q(X_{0j})^\top\rb\\
&=& \lambda^*Cov\lb \frac{I(X_{11} \leq x)}{h(X_{11})},~ h_0(X_{11})\Q(X_{11})^\top\rb - (1-\lambda^*) Cov\lb \frac{I(X_{01} \leq x)}{h(X_{01})},~h_1(X_{01})\Q(X_{01})^\top \rb\\
&=& \Lm E_0\{h_0(X)I(X \leq x)\} - \frac{1-\lambda^*}{\lambda^*}E_0\{h_1(X)I(X \leq x)\} \Rm E_0\{h_1(X)\Q(X)^\top\} .
\end{eqnarray*}
It can be checked that 
\begin{eqnarray*}
&&E_0\{h_1(X)\Q(X)\} = \frac{1}{1-\lambda^*} \A_{\bstheta\bstheta}\e_{\bstheta},\\
&&E_0\{h_0(X)I(X \leq x)\} - \frac{1-\lambda^*}{\lambda^*}E_0\{h_1(X)I(X \leq x)\} 
 = -(1-\lambda^*)\B_{0\bu}(x)^\top\e_{\bu}.
\end{eqnarray*}
Then we have 
\begin{equation*}
  Cov\lb F_0(x,\bgamma^*),\S_{n\bstheta}\rb = -\B_{0\bu}(x)^\top\e_{\bu}(\A_{\bstheta\bstheta}\e_{\bstheta})^\top.
\end{equation*}

Similarly, 
\begin{eqnarray*}
&&Cov\lb F_0(x,\bgamma^*),\S_{n\bu}\rb\\
&=& -\frac{1}{n}Cov\lb \Sumij \frac{I(X_{ij} \leq x)}{h(X_{ij})},~~ \Sumij \frac{\G(X_{ij})^\top}{h(X_{ij})}\rb\\
&=& -\lambda^*Cov\lb \frac{I(X_{11} \leq x)}{h(X_{11})},~ \frac{\G(X_{11})^\top}{h(X_{11})}\rb - (1-\lambda^*) Cov\lb \frac{I(X_{01} \leq x)}{h(X_{01})},~\frac{\G(X_{01})^\top}{h(X_{01})} \rb\\
&=& - E_0\lb\frac{I(X\leq x)\G(X)^\top}{h(X)} \rb \\
&&+\frac{1}{1-\lambda^*} \Lm E_0\{h_0(X)I(X \leq x)\} - \frac{1-\lambda^*}{\lambda^*}E_0\{h_1(X)I(X \leq x)\}\Rm E_0\{h_0(X)\G(X)^\top\}\\
&=& - E_0\lb\frac{I(X\leq x)\G(X)^\top}{h(X)} \rb - \B_{0\bu}(x)^\top\e_{\bu}\cdot E_0\{h_0(X)\G(X)^\top\}\\
&=& - \B_{0\bu}(x)^\top + \lambda^*(1-\lambda^*)\B_{0\bu}(x)^\top\e_{\bu} (\A_{\bu\bu}\e_{\bu})^\top,
\end{eqnarray*}
where in the last step we used the facts that 
\begin{equation*}
  \B_{0\bu}(x) = E_0\lb\frac{I(X\leq x)\G(X)}{h(X)}\rb~~\text{and}~~E_0\{h_0(X)\G(X)\} = -\lambda^*(1-\lambda^*)\A_{\bu\bu}\e_{\bu}.
\end{equation*}

Recall that 
\begin{equation*}
  \C = \ba{c}
  \A_{\bstheta\bstheta}\e_{\bstheta}\\ 
  -\lambda^*(1-\lambda^*) \A_{\bu\bu}\e_{\bu}
  \ea.
\end{equation*}
Hence,
\begin{equation*}
  Cov\lb F_0(x,\bgamma^*),\S^*_{n}\rb = -\ba{c} {\bf 0}\\\B_{0\bu}(x) \ea^\top - \B_{0\bu}(x)^{\top}\e_{\bu}\C^\top.
\end{equation*}

The covariance between $F_1(x,\bgamma^*)$ and $\S^*_{n}$ can be found in a similar manner; the details are omitted. We conclude 
that for any $x$ in the support of $F_0$,
\begin{equation*}
  Cov\lb F_l(x,\bgamma^*),\S^*_n\rb
  =  -\ba{c} {\bf 0}\\\B_{l\bu}(x) \ea^\top - \B_{l\bu}(x)^{\top}\e_{\bu}\C^\top,~~l \in \{0,1\}.
\end{equation*}

We now return to the form of $\bSigma(x,y)$. 
Recall that 
\begin{equation*}
  n^{-1}Var(\S_n) = \bGamma = \V - \frac{1}{\lambda^*(1-\lambda^*)}\C\C^\top~~\text{and}~~\U\V^{-1}\C = {\bf 0}.
\end{equation*}
This leads to 
\begin{eqnarray*}
\B^*_l(x)^\top\W\bGamma &=& \B^*_l(x)^\top\V^{-1}\U^\top\J^{-1}\U - \ba{c} {\bf 0}\\\B_{l\bu}(x) \ea^\top - \B_{l\bu}(x)^{\top}\e_{\bu}\C^\top\\
&=& \B^*_l(x)^\top\V^{-1}\U^\top\J^{-1}\U + Cov\lb F_l(x,\bgamma^*),\S^*_n\rb.
\end{eqnarray*}
Consequently, for $l = 0,1$, the summation of the last two terms in $\sigma_{ll}(x,x) $ is 
\begin{eqnarray}
\nonumber&&n^{-1}Var(\B^*_l(x)^\top\W\S^*_n)- 2 Cov\lb F_l(x,\bgamma^*),\B^*_l(x)^\top\W \S^*_n\rb\\
\nonumber&=&\Lm \B^*_l(x)^\top\W \bGamma -2 Cov\lb F_l(x,\bgamma^*),\S^*_n\rb\Rm \W \B^*_l(x)\\
\nonumber&=& \Lm \B^*_l(x)^\top\V^{-1}\U^\top\J^{-1}\U + \ba{c} {\bf 0}\\\B_{l\bu}(x) \ea^\top + \B_{l\bu}(x)^{\top}\e_{\bu}\C^\top \Rm \W \B^*_l(x)\\
&=& \B^*_l(x)^\top\W\B^*_l(x) + \lambda^*(1-\lambda^*)\B_{l\bu}(x)^{\top}\e_{\bu}\e_{\bu}^\top\B_{l\bu}(x).
\label{sigmall.last.two}
\end{eqnarray}
Combining (\ref{cov.hatF}) and (\ref{sigmall.last.two}) leads to 
\begin{eqnarray}
 \sigma_{ll}(x,x)=
  E_0\lb\frac{\omega^{2l}(X)I(X\leq x)}{h(X)} \rb - F_l(x)^2 + \B^*_l(x)^\top\W\B^*_l(x)
  .
  \label{sigmall.xx}
\end{eqnarray}

Using similar steps to derive \eqref{sigmall.last.two}, we find that the summation of the last three terms in $\sigma_{ls}(x,y)$ is
\begin{eqnarray}
\nonumber&&\B^{*}_l(x)^\top\W\bGamma\W\B^*_s(y)- Cov\lb F_l(x,\bgamma^*),\S^*_n\rb\W\B^*_s(y) - \B^{*}_l(x)^\top\W Cov \lb \S^*_n, F_s(y,\bgamma^*)\rb\\
\nonumber&=&\B^*_l(x)^\top\V^{-1}\U\J^{-1}\U^\top\W\B^*_s(y)- \B^*_l(x)^\top\W Cov \lb \S^*_n, F_s(y,\bgamma^*)\rb\\
&=&\B^*_l(x)^\top\W\B^*_s(y) + \lambda^*(1-\lambda^*)\B_{l\bu}(x)^{\top}\e_{\bu}\e_{\bu}^\top\B_{s\bu}(y).\label{sigmals.last.three}
\end{eqnarray}
Combining (\ref{cov.hatF}) and (\ref{sigmals.last.three}) gives 
\begin{eqnarray}
 \sigma_{ls}(x,y )=
  E_0\lb\frac{\omega^{l+s}(X)I(X\leq x\wedge y)}{h(X)} \rb - F_l(x)F_s(y) + \B^*_l(x)^\top\W\B^*_s(y)
  .
  \label{sigmals.xy}
\end{eqnarray}

Summarizing \eqref{sigmall.xx} and \eqref{sigmals.xy}, we conclude that for any $i,j \in \{l,s\}$
\begin{equation}
\label{sigmaij}
     \sigma_{ij}(x,y) = E_0\lb\frac{\omega ^{i+j} (X)I(X\leq x \wedge y)}{h(X)} \rb - F_i(x)F_j(y)+ \B^*_i(x)^\top \W \B^*_j(y),
   \end{equation}
   which is as claimed in the lemma. 
This completes the proof of (a). 

For (b): We prove that the claim in (b) is correct for $l=0$ and $s = 1$. The proofs for the other cases are similar and are omitted. 

We first simplify the matrix $\W$. 
Let $\M_q^\top = ( \C, \U^\top)$. Then $\M_q$ is full rank and therefore invertible. 
Note that 
\begin{equation*}
  \V = \M_q^\top (\M_q^\top)^{-1}\V \M_q^{-1} \M_q = \M_q^\top(\M_q \V^{-1}\M_q^\top ) ^{-1} \M_q.
\end{equation*}
Recall that $\U\V^{-1}\C={\bf 0}$ and $\J=\U\V^{-1}\U^\top$. 
Then 
$$
\M_q \V^{-1}\M_q^\top=\left(
\begin{array}{cc}
\C^\top\V^{-1}\C&{\bf 0}\\
{\bf 0}&\J\\
\end{array}
\right)
$$
and 
$$
 \V 
= \C (\C^\top\V^{-1}\C)^{-1}\C^\top + \U^\top\J^{-1}\U.
$$
Note that 
\begin{eqnarray*}
\C^\top\V^{-1}\C &=& \e_{\btheta}^\top\A_{\btheta\btheta}\e_{\btheta} + \{\lambda^*(1-\lambda^*)\}^2\e_{\bu}^\top\A_{\bu\bu}e_{\bu}\\
&=& (1-\lambda^*)E_0\{h_1(X)\}+\{\lambda^*(1-\lambda^*)\}^2E_0\left[ \frac{\{\omega(X)-1\}^2}{h(X)} \right]\\
&=& \lambda^*(1-\lambda^*),
\end{eqnarray*}
where we use the fact that 
\begin{equation*}
  \lambda^*E_0\left[ \frac{\{\omega(X)-1\}^2}{h(X)} \right] + E_0\lb \frac{\omega(X)-1}{h(X)} \rb = 0
\end{equation*}
in the last step. 
The matrix $\V$ is expressed as 
\begin{equation*}
  \V = \{\lambda^*(1-\lambda^*)\}^{-1}\C\C^\top + \U^\top\J^{-1}\U.
\end{equation*}
This expression helps us to simplify $\W$ as  
\begin{eqnarray*}
  \W &= &\V^{-1} \U^\top \J^{-1}\U\V^{-1} - \ba{cc} {\bf 0}&{\bf 0}\\ {\bf 0} & \A_{\bu\bu}^{-1}\ea\\
&=& \V^{-1}\{\U^\top \J^{-1}\U - \V\} \V^{-1} + \ba{cc}\A_{\btheta\btheta}^{-1}&{\bf 0}\\{\bf 0}&{\bf 0}\ea\\
  &=&\ba{cc}\A_{\btheta\btheta}^{-1}&{\bf 0}\\{\bf 0}&{\bf 0}\ea - \{\lambda^*(1-\lambda^*)\}^{-1}\V^{-1}\C\C^\top\V^{-1}\\
  &=&\ba{cc}\A_{\btheta\btheta}^{-1}&{\bf 0}\\{\bf 0}&{\bf 0}\ea - \{\lambda^*(1-\lambda^*)\}^{-1}\ba{c} \e_{\btheta}\\- \lambda^*(1-\lambda^*)\e_{\bu} \ea \ba{c} \e_{\btheta}\\- \lambda^*(1-\lambda^*)\e_{\bu} \ea^\top.
\end{eqnarray*}

Substituting $\W$ into \eqref{sigmaij} and using the fact that
\begin{eqnarray*}
&&\B^*_0(x)^\top\ba{c} \e_{\btheta}\\- \lambda^*(1-\lambda^*)\e_{\bu} \ea = \lambda^*F_0(x),\\
&&\B^*_1(x)^\top\ba{c} \e_{\btheta}\\- \lambda^*(1-\lambda^*)\e_{\bu} \ea = -(1-\lambda^*)F_1(x),
\end{eqnarray*} 
we find that for any $i,j \in \{l,s\}$
\begin{equation*}
\sigma_{ij}(x,y) = E_0\lb\frac{\omega^{i+j}(X)I(X\leq x \wedge y)}{h(X)} \rb + \B_{i\btheta}(x)^{\top} \A_{\btheta\btheta}^{-1} \B_{j\btheta}(y) - \delta_{ij}F_i(x)F_j(y),
\end{equation*}
where 
\begin{equation*}
  \delta_{ij} = \begin{cases} {(1-\lambda^*)}^{-1},& i =j =0\\ (\lambda^*)^{-1},& i =j = 1\\
  0,& i \neq j\end{cases}.
\end{equation*}

This form is the same as that in \cite{chen2013quantile} for the two-sample case, which completes the proof of (b).

For (c): Recall that $\U_m,\V_m,$ and $\J_m$ denote the corresponding $\U,\V,$ and $\J$ matrices obtained by using only the first $m$ EEs of $\g(\x;\etab)$. We further define $\bSigma^{(m)}_{ls}(x,y) = \{\sigma^{(m)}_{ij}(x,y)\}_{i,j\in\{l,s\}}$ and $\B^{*(m)}_l(x)$ to denote the corresponding matrix $\bSigma _{ls}(x,y)$ and vector $\B_l(x)$ obtained by using the first $m$ EEs.

From the definitions of these matrices and vectors, we notice the following relationships:
\begin{equation*}
  \U_m = (\U_{m-1},u_m);~~~ \V_m = \ba{cc} \V_{m-1}& \vartheta_{m-1,m}\\
  \vartheta_{m,m-1}&\vartheta_{m,m}\ea;~~~
  \B^{*(m)}_l(x) = \ba{c} \B^{*(m-1)}_l(x)\\ b_{lm}(x)\ea,
\end{equation*}
where $u_m$, $\vartheta_{m-1,m}$, $\vartheta_{m,m}$, and $b_{lm}(x)$ are the extra terms coming from the $m$th dimension of the EEs.

With the fact that 
\begin{equation*}
   \W = \V^{-1}( \U^\top \J^{-1}\U - \V ) \V^{-1} + \ba{cc}\A_{\btheta\btheta}^{-1}&{\bf 0}\\{\bf 0}&{\bf 0}\ea,
\end{equation*}
the entry in the covariance matrix $\bSigma^{(m)}_{ls}(x,y)$ can be written as 
\begin{eqnarray*}
\sigma^{(m)}_{ij}(x,y) &=& E_0\lb\frac{\omega^{i+j}(X)I(X\leq x \wedge y)}{h(X)} \rb - F_i(x)F_j(y)+ \B^{*(m)}_i(x)^\top \W \B^{*(m)}_j(y)\\
&=& E_0\lb\frac{\omega^{i+j}(X)I(X\leq x \wedge y)}{h(X)} \rb - F_i(x)F_j(y)+ \B_{i\btheta}(x)^\top \A_{\btheta\btheta}^{-1} \B_{j\btheta}(y)\\
&& - \B_i^{*(m)}(x)^\top
 \V_m^{-1} ( \V_{m} - \U_m^\top\J_m^{-1}\U_m ) \V_m^{-1}
 \B_j^{*(m)}(x)
\end{eqnarray*}
for any $i,j \in \{l,s\}$.

Therefore, 
\begin{eqnarray*}
&&\bSigma^{(m-1)}_{ls}(x,y)
- 
\bSigma^{(m)}_{ls}(x,y)\\
&=&
\left(
\begin{array}{c}
\B_l^{*(m)}(x)\\
\B_s^{*(m)}(y)\\
\end{array}
\right)^\top( \V_{m} - \U_m^\top\J_m^{-1}\U_m ) \V_m^{-1}
\left(
\begin{array}{c}
\B_l^{*(m)}(x)\\
\B_s^{*(m)}(y)\\
\end{array}
\right) \\
&&-
 \left(
\begin{array}{c}
\B_l^{*(m-1)}(x)\\
\B_s^{*(m-1)}(y)\\
\end{array}
\right)^\top( \V_{m-1} - \U_{m-1}^\top\J_{m-1}^{-1}\U_{m-1} ) \V_{m-1}^{-1}
\left(
\begin{array}{c}
\B_l^{*(m-1)}(x)\\
\B_s^{*(m-1)}(y)\\
\end{array}
\right). 
\end{eqnarray*}

Using the results in \eqref{coro1_V_inequality} and \eqref{coro1_J_inequality}, we have 
\begin{eqnarray*}
&& \V_m^{-1}\{\V_{m} - \U_m^\top\J_m^{-1}\U_m\}\V_m^{-1}\\
 &\geq& \ba{cc}
\V_{m-1}^{-1}&{\bf 0}\\{\bf 0}&{0}
\ea  
\left\{ \ba{cc} \V_{m-1}& \vartheta_{m-1,m}\\
\vartheta_{m,m-1}&\vartheta_{m,m}\ea -\ba{c}\U_{m-1}^\top\\u_m^\top\ea \J_{m-1}^{-1}(\U_{m-1},u_m)
\right\}
\ba{cc}
\V_{m-1}^{-1}&{\bf 0}\\{\bf 0}&{0}
\ea\\
&\geq&
\ba{cc}
\V_{m-1}^{-1}\{\V_{m-1} - \U_{m-1}^\top\J_{m-1}^{-1}\U_{m-1}\}\V_{m-1}^{-1}&{\bf 0}\\{\bf 0}&{0}
\ea .
\end{eqnarray*}
This implies that 
\begin{eqnarray*}
&&\bSigma^{(m-1)}_{ls}(x,y)
- 
\bSigma^{(m)}_{ls}(x,y)\\
&\geq&
\left(
\begin{array}{c}
\B_l^{*(m)}(x)\\
\B_s^{*(m)}(y)\\
\end{array}
\right)^\top \ba{cc}
\V_{m-1}^{-1}\{\V_{m-1} - \U_{m-1}^\top\J_{m-1}^{-1}\U_{m-1}\}\V_{m-1}^{-1}&{\bf 0}\\{\bf 0}&{0}
\ea 
\left(
\begin{array}{c}
\B_l^{*(m)}(x)\\
\B_s^{*(m)}(y)\\
\end{array}
\right) \\
&&-
 \left(
\begin{array}{c}
\B_l^{*(m-1)}(x)\\
\B_s^{*(m-1)}(y)\\
\end{array}
\right)^\top( \V_{m-1} - \U_{m-1}^\top\J_{m-1}^{-1}\U_{m-1} ) \V_{m-1}^{-1}
\left(
\begin{array}{c}
\B_l^{*(m-1)}(x)\\
\B_s^{*(m-1)}(y)\\
\end{array}
\right)\\
&=&{\bf 0}.
\end{eqnarray*}
This completes the proof of (c). 

\section{Proof of Theorem 5}

We first introduce two lemmas that will be helpful in the proof of Theorem 5.
The following lemma establishes the convergence rate of $\hat{\xi}_{i,\tau}$.  

\begin{lemma}
\label{lemma1.quantile}
Assume the conditions of Theorem 5 are satisfied. For each fixed $\tau\in(0,1)$ and $i = 0,1$, we have 
$$\hat{\xi}_{i,\tau} - \xi_{i,\tau} = O_p(n^{-1/2}).$$ 
\end{lemma}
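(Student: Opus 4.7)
The proof strategy combines the monotonicity of $\hat{F}_i(x)$ in $x$ with the $n^{-1/2}$ pointwise convergence of $\hat{F}_i$ established in Theorem~\ref{theoremMELE.cdf}, together with a Taylor expansion of $F_i$ enabled by the continuity and positivity of $f_i$ at $\xi_{i,\tau}$. Since $\hat{F}_i$ is a weighted empirical CDF and hence non-decreasing, for any $M>0$,
$$
P\bigl(n^{1/2}|\hat{\xi}_{i,\tau}-\xi_{i,\tau}|>M\bigr)\leq P\bigl(\hat{F}_i(\xi_{i,\tau}+Mn^{-1/2})<\tau\bigr)+P\bigl(\hat{F}_i(\xi_{i,\tau}-Mn^{-1/2})\geq\tau\bigr),
$$
so it suffices to make each right-hand probability arbitrarily small by choosing $M$ large.

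First I would Taylor expand $F_i$, using continuity and positivity of $f_i$ at $\xi_{i,\tau}$, to obtain $F_i(\xi_{i,\tau}\pm Mn^{-1/2})=\tau\pm Mn^{-1/2}f_i(\xi_{i,\tau})+o(n^{-1/2})$. Next I would invoke the linear approximation of $\hat{F}_i(x)$ derived in the proof of Theorem~\ref{theoremMELE.cdf},
$$
\hat{F}_i(x)=F_i(x,\bgamma^*)-\frac{1}{n}\B_i^*(x)^\top \W \S_n^* + o_p(n^{-1/2}),
$$
evaluated at the two shifted points $x=\xi_{i,\tau}\pm Mn^{-1/2}$. Using the continuity of $\B_i^*(\cdot)$ in $x$ together with a standard oscillation bound giving
$$
F_i(\xi_{i,\tau}+Mn^{-1/2},\bgamma^*)-F_i(\xi_{i,\tau}+Mn^{-1/2})=F_i(\xi_{i,\tau},\bgamma^*)-F_i(\xi_{i,\tau})+o_p(n^{-1/2}),
$$
and the analogous identity at $\xi_{i,\tau}-Mn^{-1/2}$, these combine to
$$
\hat{F}_i(\xi_{i,\tau}\pm Mn^{-1/2})=\tau\pm Mn^{-1/2}f_i(\xi_{i,\tau})+n^{-1/2}Z_n+o_p(n^{-1/2}),
$$
where $Z_n$ converges in distribution to $N\bigl(0,\sigma_{ii}(\xi_{i,\tau},\xi_{i,\tau})\bigr)$ by Theorem~\ref{theoremMELE.cdf} and, crucially, does not depend on $M$.

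Given this representation, for any $\epsilon>0$ the tightness of $\{Z_n\}$ lets me pick $M$ large enough that $\limsup_n P\bigl(|Z_n|\geq Mf_i(\xi_{i,\tau})\bigr)<\epsilon$, which forces both tail probabilities in the first display to be below $\epsilon$. This yields $n^{1/2}(\hat{\xi}_{i,\tau}-\xi_{i,\tau})=O_p(1)$, as required. The main technical obstacle is the transition from pointwise to locally uniform control of $\hat{F}_i$: one must show that both the $o_p(n^{-1/2})$ remainder in the linear expansion and the increments of $F_i(\cdot,\bgamma^*)$ are uniformly negligible over intervals of length $O(n^{-1/2})$ about $\xi_{i,\tau}$. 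Under Conditions~C3--C5 and the continuity of $f_i$ at $\xi_{i,\tau}$, this is a routine empirical-process oscillation estimate, directly analogous to the step used in establishing the classical Bahadur representation for empirical quantiles, and it is this same tool that will be used more precisely to sharpen the bound from $O_p(n^{-1/2})$ to the $O_p(n^{-3/4}(\log n)^{1/2})$ remainder in Theorem~\ref{theoremBahadur}.
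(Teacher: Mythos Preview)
Your argument is correct, but it takes a different route from the paper's. The paper proves the stronger uniform statement $\sup_x|\hat{F}_i(x)-F_i(x)|=O_p(n^{-1/2})$ and reads off the quantile rate from that. Concretely, it introduces an intermediate estimator $\bar F_i$ that uses $(\hat\btheta,\lambda^*)$ but not the Lagrange multipliers $(\hat\lambda,\hat\bnu)$, invokes \cite{chen2013quantile} for the uniform rate of $\bar F_i-F_i$, and bounds $\sup_x|\hat F_i-\bar F_i|$ directly from $\hat\lambda-\lambda^*=O_p(n^{-1/2})$, $\hat\bnu=O_p(n^{-1/2})$ and the moment condition~C5. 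Your approach is more local: you bypass the Glivenko--Cantelli result and control $\hat F_i$ only at the two points $\xi_{i,\tau}\pm Mn^{-1/2}$, reusing the linear expansion from Theorem~\ref{theoremMELE.cdf}. This is economical given that expansion, but it shifts the work to verifying that the $o_p(n^{-1/2})$ remainder there holds uniformly over an $O(n^{-1/2})$-neighborhood --- the obstacle you flag but defer. The paper's uniform bound avoids that issue and, since it is reused in proving the oscillation lemma behind Theorem~\ref{theoremBahadur}, establishing it once up front is the cleaner organization.
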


\begin{proof}
We concentrate on the case $i=0$; the case $i=1$ can be proved similarly. 
Let $ \Delta_{n} = \sup_x \vert\hat{F}_0(x) - F_0(x)\vert $. 
It suffices to show that \citep{chen2013quantile, chen2019composite}
\begin{equation}
\label{deltan}
  \Delta_{n}= O_p(n^{-1/2}).
\end{equation}

Define
$$
\bar F_0(x)= \frac{1}{n}\Sumij \frac{I(X_{ij} \leq x)}
{1+\lambda^* \left[ \exp\{{\hat\btheta}^\top \Q(X_{ij})\}-1\right] }.
$$
Then 
\begin{eqnarray*}
 \Delta_n &=& \sup_x \vert\hat{F}_0(x) - F_0(x)\vert
 \leq \sup_x \vert\hat{F}_0(x)- \bar F_0(x)\vert + \sup_x\vert \bar F_0(x) - F_i(x)\vert= \Delta_{n1}+\Delta_{n2}, \end{eqnarray*}
where 
$$
\Delta_{n1}= \sup_x \vert\hat{F}_0(x)- \bar F_0(x)\vert
$$
and 
$$
\Delta_{n2}=\sup_x\vert \bar F_0(x) - F_0(x)\vert. 
$$

Following the proof of Theorem 3.1 in \cite{chen2013quantile} and Lemma 1 in \cite{chen2019composite}, we can verify that 
$$\Delta_{n2} = O_p(n^{-1/2}). $$
With this result, the claim \eqref{deltan} is proved if 
$\Delta_{n1} = O_p(n^{-1/2})$.

As preparation, we argue that 
\begin{equation}
\label{lower.denom}
(n\hat p_{ij})^{-1}
=
 1+\hat \lambda[\exp\{{\hat\btheta}^\top \Q(X_{ij})\}-1] + \hat{\bnu}^\top\g(X_{ij};\hat{\bpsi},\hat{\btheta}) \geq 1-\lambda^*+o_p(1)
\end{equation}
or equivalently $\hat p_{ij}\leq n^{-1}\{1-\lambda^*+o_p(1)\}^{-1}=O_p(1/n) $.
Note that 
\begin{eqnarray*}
 (n\hat p_{ij})^{-1}&\geq&1-\hat\lambda+\hat{\bnu}^\top\g(X_{ij};\hat{\bpsi},\hat{\btheta})\geq 1-\hat\lambda -
 \lVert \hat \bnu \rVert  \max_{ij} \lVert \g(X_{ij};\hat{\bpsi},\hat{\btheta})\rVert . 
\end{eqnarray*}
By Condition C5, 
$$
 \max_{ij} \lVert \g(X_{ij};\hat{\bpsi},\hat{\btheta})\rVert\leq \max_{ij} R^{1/3}(X_{ij})=o_p(n^{1/2}),
$$
which, together with $\hat\bgamma-\bgamma^*=O_p(n^{-1/2})$, implies that \eqref{lower.denom} is valid. 

We now return to argue that $\Delta_{n1}=O_p(n^{-1/2})$. 
After some algebra, we have 
\begin{eqnarray*}
 && \hat{F}_0(x)- \bar F_0(x)\\
 & =&\Sumij\hat p_{ij}\frac{
 (\lambda^*-\hat\lambda) \left[ \exp\{{\hat\btheta}^\top \Q(X_{ij})\}-1\right]
 -\hat{\bnu}^\top\g(X_{ij};\hat{\bpsi},\hat{\btheta}) }
 { 1+\lambda^* \left[ \exp\{{\hat\btheta}^\top \Q(X_{ij})\}-1\right] }I(X_{ij}\leq x). 
 \end{eqnarray*}
 Using \eqref{lower.denom}, we have 
\begin{eqnarray}
\nonumber
 |\hat{F}_0(x)- \bar F_0(x)|
 &\leq& O_p(1/n) \Sumij 
\frac{
| \hat \lambda-\lambda^*|  \left[ \exp\{{\hat\btheta}^\top \Q(X_{ij})\}+1\right]
}
 { 1+\lambda^* \left[ \exp\{{\hat\btheta}^\top \Q(X_{ij})\}-1\right] }I(X_{ij}\leq x)\\
 \nonumber &&+O_p(1/n) \Sumij \frac{
 |\hat{\bnu}^\top\g(X_{ij};\hat{\bpsi},\hat{\btheta})|
}
 { 1+\lambda^* \left[ \exp\{{\hat\btheta}^\top \Q(X_{ij})\}-1\right] }I(X_{ij}\leq x)
\\
\nonumber&\leq& O_p(1/n) \Sumij 
\frac{
| \hat \lambda-\lambda^*|  
}
 { \lambda^* (1-\lambda^*) }I(X_{ij}\leq x)\\
 &&+O_p(1/n) \Sumij \frac{
 |\hat{\bnu}^\top\g(X_{ij};\hat{\bpsi},\hat{\btheta})|
}
 { 1-\lambda^* }I(X_{ij}\leq x).\label{deltan1.part1}
 \end{eqnarray}
By Condition C5, 
$$
\Delta_{n1}
=\sup_{x} |\hat{F}_0(x)- \bar F_0(x)|
\leq O_p(1) | \hat \lambda-\lambda^*|
+O_p(1) 
 \frac{1}{n}\Sumij\left\{\lVert \hat{\bnu}\rVert R^{1/3}(X_{ij}) \right\},
$$
which, together 
with $\hat\bgamma-\bgamma^*=O_p(n^{-1/2})$, implies that 
\begin{equation*}
  \Delta_{n1} = O_p(n^{-1/2}).
\end{equation*}
This completes the proof. 
\end{proof}

\begin{lemma}
\label{lemma2.quantile}
Under the regularity conditions, for any $c>0$ and $i = 0,1$, we have 
\begin{equation*}
  \sup \limits_{x:~\vert x-\xi_{i,\tau}\vert<c n^{-1/2}} \vert \{\hat{F}_0(x) - \hat{F}_0(\xi_{i,\tau})\}-\{F_0(x) - F_i(\xi_{i,\tau})\}\vert
  =O_p(n^{-3/4}(\log(n))^{1/2}).
\end{equation*}
\end{lemma}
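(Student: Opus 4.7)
The plan is to reduce the desired supremum bound to a modulus-of-continuity inequality for a centered empirical-type process over an $O(n^{-1/2})$-window around $\xi_{i,\tau}$. First, I would invoke the uniform first-order expansion of $\hat F_0(x)$ derived in the proof of Theorem~\ref{theoremMELE.cdf}:
$$
\hat F_0(x) = F_0(x,\bgamma^*) - \B_0^*(x)^\top(\hat\bgamma-\bgamma^*) + R_n(x),
$$
where $R_n(x)=o_p(n^{-1/2})$ uniformly in $x$ (Condition C5 guarantees the quadratic remainder is dominated by an integrable envelope). Subtracting the same identity at $\xi_{i,\tau}$ and then removing the deterministic piece $F_0(x)-F_0(\xi_{i,\tau})$, I decompose the target into three pieces,
$$
\{\hat F_0(x)-\hat F_0(\xi_{i,\tau})\}-\{F_0(x)-F_0(\xi_{i,\tau})\}=T_{n1}(x)+T_{n2}(x)+T_{n3}(x),
$$
with
$T_{n1}(x)=\{F_0(x,\bgamma^*)-F_0(x)\}-\{F_0(\xi_{i,\tau},\bgamma^*)-F_0(\xi_{i,\tau})\}$,
$T_{n2}(x)=-\{\B_0^*(x)-\B_0^*(\xi_{i,\tau})\}^\top(\hat\bgamma-\bgamma^*)$,
and $T_{n3}(x)=R_n(x)-R_n(\xi_{i,\tau})$.

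Next, I would dispatch $T_{n2}$ and $T_{n3}$. Each component of $\B_0^*(x)$ is an integral of the form $E_0\{w(X)I(X\leq x)\}$ with $w$ bounded; because $f_0$ is continuous and positive at $\xi_{i,\tau}$ (hence bounded on a neighborhood), $\B_0^*(\cdot)$ is Lipschitz near $\xi_{i,\tau}$, so $\|\B_0^*(x)-\B_0^*(\xi_{i,\tau})\|=O(n^{-1/2})$ uniformly on the window. Combined with $\hat\bgamma-\bgamma^*=O_p(n^{-1/2})$ from Theorem~\ref{theorem_eta}, this yields $\sup_x|T_{n2}(x)|=O_p(n^{-1})$. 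The remainder $\sup_x|T_{n3}(x)|=o_p(n^{-1/2})$ by the uniform Taylor bound. Both are absorbed into the $O_p(n^{-3/4}(\log n)^{1/2})$ rate.

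The substantive work lies in bounding $T_{n1}$. Writing
$$
F_0(x,\bgamma^*)-F_0(x)=\frac{1}{n}\sum_{i=0}^{1}\sum_{j=1}^{n_i}\left\{\frac{I(X_{ij}\leq x)}{h(X_{ij})}-F_0(x)\right\},
$$
the increment $T_{n1}(x)$ is a centered sum of independent bounded random variables that vanish outside $X_{ij}\in(x\wedge\xi_{i,\tau},\,x\vee\xi_{i,\tau}]$. Since the weights $1/h(X_{ij})$ are bounded in $[1,(1-\lambda^*)^{-1}]$ and $f_0$ is bounded near $\xi_{i,\tau}$, the variance of $T_{n1}(x)$ satisfies $\mathrm{Var}(T_{n1}(x))=O(|x-\xi_{i,\tau}|/n)=O(n^{-3/2})$ on the window. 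A Bernstein inequality gives exponential tails of order $\exp\{-Ct^2/(n^{-3/2}+n^{-1}t)\}$; applying it at level $t=K n^{-3/4}(\log n)^{1/2}$ makes the bound summable, and a standard chaining/partitioning over a grid of $\lceil n^{1/4}\rceil$ points in the window, together with the Lipschitz behavior of $E_0[I(X\leq x)/h(X)]$ controlling inter-grid oscillations, upgrades the pointwise bound to
$$
\sup_{|x-\xi_{i,\tau}|<cn^{-1/2}}|T_{n1}(x)|=O_p\bigl(n^{-3/4}(\log n)^{1/2}\bigr).
$$
This is the classical Bahadur--Kiefer-type argument used in \cite{zhang2000quantile} and \cite{chen2013quantile}, now applied to weighted indicators rather than the unweighted empirical CDF.

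The main obstacle is this last step: verifying that the Bernstein variance factor really is $O(n^{-3/2})$ uniformly in the strip and choosing the chaining grid so that the logarithmic factor suffices. The bounded weights and the positivity of $f_0$ at $\xi_{i,\tau}$ make this a routine but careful adaptation of the Bahadur--Kiefer proof; once this modulus is in hand, combining the three bounds delivers the lemma and, consequently, Theorem~\ref{theoremBahadur}.
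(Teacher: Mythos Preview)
Your decomposition into $T_{n1}$, $T_{n2}$, $T_{n3}$ is a reasonable alternative to the paper's route, but there is a concrete gap in the handling of $T_{n3}$. You assert $\sup_x|T_{n3}(x)|=o_p(n^{-1/2})$ and then claim this is ``absorbed into the $O_p(n^{-3/4}(\log n)^{1/2})$ rate.'' That implication is backwards: $n^{-3/4}(\log n)^{1/2}=o(n^{-1/2})$, so a bound of $o_p(n^{-1/2})$ does \emph{not} imply $O_p(n^{-3/4}(\log n)^{1/2})$. To rescue the step you must show the sharper rate directly; in fact the Taylor remainder is quadratic in $\hat\bgamma-\bgamma^*$, so with a uniform $O_p(1)$ bound on the Hessian of $F_0(x,\bgamma)$ you would get $\sup_x|R_n(x)|=O_p(n^{-1})$, which suffices. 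But this argument is not in your proposal, and the uniformity of the Hessian bound over $x$ and a neighborhood of $\bgamma^*$ needs to be verified, not just asserted via Condition~C5. A minor related issue: the weight functions $w$ defining $\B_0^*(x)$ (e.g.\ $h_1(X)\Q(X)$ and $\G(X)/h(X)$) are \emph{not} globally bounded; you only need local boundedness near $\xi_{i,\tau}$ for the Lipschitz claim in $T_{n2}$, so the statement should be adjusted.

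For comparison, the paper avoids the Taylor-expansion route altogether. It introduces the intermediate estimator
\[
\bar F_0(x)=\frac{1}{n}\Sumij \frac{I(X_{ij}\leq x)}{1+\lambda^*[\exp\{\hat\btheta^\top\Q(X_{ij})\}-1]},
\]
which is the DRM-only CDF estimator evaluated at $\hat\btheta$, and splits the target as $\{\hat F_0-\bar F_0\}+\{\bar F_0-F_0\}$. The second increment is handled by citing Lemma~A.2 of \cite{chen2013quantile} directly, bypassing your Bernstein-plus-chaining argument for $T_{n1}$. The first increment is bounded by $O_p(n^{-1/2})\cdot n^{-1}\sum_{ij}\{1+R^{1/3}(X_{ij})\}I(\xi_{i,\tau}<X_{ij}\leq x)$, and the expectation of the indicator sum over an $O(n^{-1/2})$ window is $O(n^{-1/2})$, giving $O_p(n^{-1})$ overall. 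This bridge-through-$\bar F_0$ strategy is shorter and reuses existing machinery; your approach is self-contained but requires you to redo the Bahadur--Kiefer modulus and to sharpen the $T_{n3}$ bound correctly.
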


\begin{proof}
We prove this lemma for $i=0$; the case $i=1$ is equivalent. 
Without loss of generality we assume $x \geq \xi_{0,\tau}$. Note that 
\begin{eqnarray}
\nonumber
  && \vert \{\hat{F}_0(x) - \hat{F}_0(\xi_{0,\tau})\}-\{F_0(x) - F_0(\xi_{0,\tau})\}\vert \\\nonumber
  &\leq& \vert \{\hat{F}_0(x) - \hat{F}_0(\xi_{0,\tau})\}-\{\bar F_0(x) - \bar F_0(\xi_{0,\tau})\}\vert\\ 
  \label{suphatF}
  &&+ \vert \{\bar F_0(x) - \bar F_0(\xi_{0,\tau})\}-\{F_0(x) - F_0(\xi_{0,\tau})\}\vert.
\end{eqnarray}

Following the proof of Lemma A.2 in 
\cite{chen2013quantile}, we can verify that 
\begin{equation*}
  \sup \limits_{x:~0\leq x-\xi_{0,\tau}<cn^{-1/2}}\vert \{ \bar F_0(x) - \bar F_0(\xi_{0,\tau})\}-\{F_0(x) - F_0(\xi_{0,\tau}) \}\vert = O_p(n^{-3/4}(\log(n))^{1/2}).
\end{equation*}
Consequently, we need to show only that the first term in \eqref{suphatF} has a higher order than $n^{-3/4}(\log(n))^{1/2}$ uniformly in $0\leq x-\xi_{0,\tau}<cn^{-1/2}$. 

With the technique used to obtain \eqref{deltan1.part1}, 
we have 
\begin{eqnarray*}
&&\vert \{\hat{F}_0(x) - \hat{F}_0(\xi_{0,\tau})\}-\{\bar F_0(x) - \bar F_0(\xi_{0,\tau})\} \vert\\ 
&\leq& O_p(1/n) \Sumij 
\frac{
| \hat \lambda-\lambda^*|  
}
 { \lambda^* (1-\lambda^*) }I(\xi_{0,\tau}< X_{ij}\leq x)\\
 &&+O_p(1/n) \Sumij \frac{
\lVert \hat{\bnu}\rVert R^{1/3}(X_{ij}) 
}
 { 1-\lambda^* }I(\xi_{0,\tau}< X_{ij}\leq x)\\
 &=&O_p(n^{-1/2}) \frac{1}{n } \Sumij \{1+R^{1/3}(X_{ij})  \}I(\xi_{0,\tau}< X_{ij}\leq x). 
\end{eqnarray*}

By Condition C5, 
$$
E_0\{1+R^{1/3}(X)  \}<\infty ~~
\mbox{and}~~
E_1 \{1+R^{1/3}(X)  \}<\infty,
$$ 
then uniformly in $x$
$$
E_0[\{1+R^{1/3}(X)  \} I(\xi_{0,\tau}< X_{ij}\leq x)]=O_{p}(n^{-1/2})
$$
and 
$$
E_1[\{1+R^{1/3}(X)  \} I(\xi_{0,\tau}< X_{ij}\leq x)]=O_{p}(n^{-1/2}).
$$
Therefore, 
\begin{eqnarray*}
\sup \limits_{x:~0\leq x-\xi_{0,\tau}<c n^{-1/2}} \vert \{\hat{F}_0(x) - \hat{F}_0(\xi_{0,\tau})\}-\{\bar F_0(x) - \bar F_0(\xi_{0,\tau})\} \vert =O_p(n^{-1}). 
\end{eqnarray*}
This completes the proof. 
\end{proof}

We are now ready to prove Theorem 5.
By Lemma \ref{lemma1.quantile}, for $i = 0,1$,
\begin{eqnarray}
\nonumber
F_i(\hat{\xi}_{i,\tau}) - F_i(\xi_{i,\tau})
&=& f_i(\xi_{i,\tau})(\hat{\xi}_{i,\tau} - \xi_{i,\tau})+ O_p((\hat{\xi}_{i,\tau} - \xi_{i,\tau})^2)\\
\label{taylorHatF}
&=&f_i(\xi_{i,\tau})(\hat{\xi}_{i,\tau} - \xi_{i,\tau})+ O_p(n^{-1}).
\end{eqnarray}
Note that $\hat{F}_i(\hat{\xi}_{i,\tau})=\tau + O(n^{-1})$. Replacing $x$ by $\hat{\xi}_{i,\tau}$ in Lemma \ref{lemma2.quantile} and using \eqref{taylorHatF} yields
\begin{equation*}
  \tau - \hat{F}_i(\xi_{i,\tau}) = 
  f_i(\xi_{i,\tau})(\hat{\xi}_{i,\tau} - \xi_{i,\tau}) + 
  O_p (n^{-3/4}(\log(n))^{1/2}). 
\end{equation*}
This completes the proof.

\section{Proof of Theorem 6}
The results in (a) and (b) are direct consequences of Theorems 4 and 5. 

For (c): We note that 
$$
  \bOmega_{ls}
  =\left(
  \begin{array}{cc}
  1/f_l(\xi_{l,\tau_l})&0\\
  0&1/f_s(\xi_{s,\tau_s})\\
  \end{array}
  \right)
  \bSigma_{ls}(\xi_{l,\tau_l}, \xi_{s,\tau_s})
  \left(
  \begin{array}{cc}
  1/f_l(\xi_{l,\tau_l})&0\\
  0&1/f_s(\xi_{s,\tau_s})
  \end{array}
  \right). 
$$
Then Theorem 4(c) implies the results in (c). 
This completes the proof. 

\medskip 

\end{document}